\title{Numerical Methods for the Magnetic Induction Equation with Hall Effect and
       Projections onto Divergence-Free Vector Fields}
\author{Hendrik Ranocha, Katharina Ostaszewski, Philip Heinisch}
\date{2nd October 2018}
\declaretheoremstyle[
  bodyfont=\normalfont\itshape,
  headformat=\NAME\ \NUMBER\NOTE,
]{myplain}
\declaretheoremstyle[
  headformat=\NAME\ \NUMBER\NOTE,
]{mydefinition}
\newcommand{\envqed}{{\lower-0.3ex\hbox{$\triangleleft$}}}
\declaretheorem[style=myplain,numberwithin=section]{theorem}
\declaretheorem[style=myplain,numberlike=theorem]{lemma}
\declaretheorem[style=myplain,numberlike=theorem]{proposition}
\declaretheorem[style=mydefinition,numberlike=theorem,qed=\envqed]{remark}
\declaretheorem[style=mydefinition,numberlike=theorem,qed=\envqed]{example}
\pgfplotsset{compat=1.13}
\newcommand\mynewtag[2]{#1\def\@currentlabel{#1}\label{#2}}
\newcommand{\appendixref}[1]{\autoref{#1}}
\NewDocumentCommand{\mat}{mo}{%
  \IfValueTF{#2}{%
    \underline{\underline{#1}}{#2}
  }{%
    \underline{\underline{#1}}\,
  }%
}
\newcommand{\diag}[1]{\operatorname{diag}\left(#1\right)}
  \renewcommand{\div}{\operatorname{div}}
\newcommand{\grad}{\operatorname{grad}}
\newcommand{\curl}{\operatorname{curl}}
\newcommand{\scp}[2]{\left\langle{#1,\, #2}\right\rangle}
\DeclarePairedDelimiterX\newset[1]\lbrace\rbrace{\setaux #1||\endsetaux}
\def\setaux#1|#2|#3\endsetaux{\if\relax\detokenize{#2}\relax #1 \else #1 \;\delimsize\vert\; #2 \fi}
\renewcommand{\set}[1]{\newset*{#1}}
\newcommand{\vect}[1]{\begin{pmatrix} #1 \end{pmatrix}}
\newcommand{\I}{\operatorname{I}}
\newcommand{\fextj}{f^{\mathrm{ext},j}}
\newcommand{\cfl}{\mathrm{cfl}}
\newcommand{\VOL}{\mathrm{VOL}}
\newcommand{\SURF}{\mathrm{SURF}}
  \let\rho\varrho
  \let\phi\varphi
  \let\epsilon\varepsilon
\newcommand{\N}{\mathbb{N}}
\newcommand{\R}{\mathbb{R}}
\newsavebox{\DelimiterBox}
\newlength{\DelimiterHeight}
\newlength{\DelimiterDepth}
\newsavebox{\ArgumentBox}
\newlength{\ArgumentHeight}
\newlength{\ArgumentDepth}
\newlength{\ResizedDelimiterHeight}
\newlength{\ResizedDelimiterDepth}
\newcommand{\encloseby}[3]{%
  \savebox{\ArgumentBox}{$\displaystyle #1$}%
  \settoheight{\ArgumentHeight}{\usebox{\ArgumentBox}}%
  \settodepth{\ArgumentDepth}{\usebox{\ArgumentBox}}%
  \savebox{\DelimiterBox}{#2}%
  \settoheight{\DelimiterHeight}{\usebox{\DelimiterBox}}%
  \settodepth{\DelimiterDepth}{\usebox{\DelimiterBox}}%
  \setlength{\ResizedDelimiterHeight}{%
    \maxof{1.2\ArgumentHeight}{\DelimiterHeight}%
  }
  \setlength{\ResizedDelimiterDepth}{%
    \maxof{1.2\ArgumentDepth}{\DelimiterDepth}%
  }
  \raisebox{-\ResizedDelimiterDepth}{%
    \resizebox{\width}{\ResizedDelimiterHeight+\ResizedDelimiterDepth}{%
      \raisebox{\DelimiterDepth}{#2}%
    }%
  }
  #1
  \raisebox{-\ResizedDelimiterDepth}{%
    \resizebox{\width}{\ResizedDelimiterHeight+\ResizedDelimiterDepth}{%
      \raisebox{\DelimiterDepth}{#3}%
    }%
  }
}
  \newcommand{\mean}[1]{\encloseby{#1}{$\{\mkern-5mu\{$}{$\}\mkern-5mu\}$}}
  \newcommand{\jump}[1]{\encloseby{#1}{$[\mkern-4mu[$}{$]\mkern-4mu]$}}
  \newcommand{\mean}[1]{\encloseby{#1}{$\{\mkern-6mu\{$}{$\}\mkern-6mu\}$}}
  \newcommand{\jump}[1]{\encloseby{#1}{$[\mkern-3mu[$}{$]\mkern-3mu]$}}
\newcommand{\1}[1]{\mathbbm{1}_{\set{#1}}}
\newcommand{\Bnum}{B^\mathrm{num}}
\begin{document}

\maketitle

\begin{abstract}
  The nonlinear magnetic induction equation with Hall effect can be used to model
magnetic fields, e.g. in astrophysical plasma environments. In order to give
reliable results, numerical simulations should be carried out using effective
and efficient schemes. Thus, high-order stable schemes are investigated here.

Following the approach provided recently by Nordström (J Sci Comput 71.1,
pp. 365--385, 2017), an energy analysis for both the linear and the nonlinear
induction equation including boundary conditions is performed at first. Novel
outflow boundary conditions for the Hall induction equation are proposed,
resulting in an energy estimate.
Based on an energy analysis of the initial boundary value problem at the continuous
level, semidiscretisations using summation by parts (SBP) operators and simultaneous
approximation terms are created. Mimicking estimates at the continuous level,
several energy stable schemes are obtained in this way and compared in numerical
experiments.
Moreover, stabilisation techniques correcting errors in the numerical divergence
of the magnetic field via projection methods are studied from an energetic
point of view in the SBP framework. In particular, the treatment of boundaries
is investigated and a new approach with some improved properties is proposed.

\end{abstract}

\section{Introduction}
\label{sec:introduction}

Numerical plasma simulations have many applications not only in space physics,
but also in engineering. In recent years increasingly powerful computers have
caused a quick adoption of different numerical models to simulate the interaction
of the solar wind with different celestial objects \citep{koenders2015innerComa,
huang2018hall}, space weather \citep{toth2005SWMF}, and the performance of plasma
engines \citep{Boyd2006,Nishida2012}. While advances in computational power and
available memory have allowed for increasingly accurate physical models with
higher spatial and temporal accuracy, the numerical methods used to solve the
underlying equations have started to become a limiting factor. In the past
numerical instabilities and other artefacts were commonly small compared to the
errors introduced by insufficient physical modelling, but with increased model
quality and accuracy, shortcomings in the numerical methods became noticeable.

This article is concerned with the numerical treatment of one of the primary
equations behind all numeric plasma models: the magnetic induction equation. It
is widely used in different models, except some completely kinetic approaches, and is
usually written in non-dimensional form as
\begin{equation}
\label{eq:induction-full}
  \partial_t B
  =
  \underbrace{\nabla \times (u \times B)}_{\text{transport term}}
  \underbrace{- \nabla \times \biggl( \frac{\nabla \times B}{\rho} \times B \biggr)}_{\text{Hall term}},
\end{equation}
where $B$ is the magnetic field, $u$ the particle velocity, $\rho$ the particle
charge density, and $\nabla \times B$
the curl of $B$. If not mentioned otherwise, all functions depend on time $t \in (0,T)$
and space $x = (x_1,x_2,x_3) \in \Omega \subseteq \R^3$. The first term on the
right hand side of \eqref{eq:induction-full} is often called \emph{transport term}
and the second one is the \emph{Hall term}.
In general, the induction equation \eqref{eq:induction-full} is supplemented with
the divergence constraint $\div B = 0$ on the magnetic field. Of course, suitable
initial and boundary conditions have to be given.

The induction equation \eqref{eq:induction-full} can be used as part of larger
physical models, in particular magnetohydrodynamics (MHD).
Then, there are additional equations determining the particle charge density and
velocity. Considering the induction equation \eqref{eq:induction-full} as a
model on its own, the quantities $u$ and $\rho$ are given data.
While the Hall term on the right hand side of \eqref{eq:induction-full} can be dropped for some applications,
many MHD models require it to accurately describe processes such as the evolution of the protostellar disk \citep{Ebrahimi2011,Simon2013} or the comet solar wind interaction \citep{huang2018hall}.
Other terms may also be added to extend the model and describe additional physical processes governed primarily by resistive or electron inertia effects.
Using $\div (\nabla \times \cdot) = 0$, the divergence constraint $\div B = 0$
will be automatically fulfilled if the initial condition $B^0$ satisfies it,
all functions are sufficiently smooth, and boundaries are ignored.

The magnetic induction equation \eqref{eq:induction-full} has been considered
in different forms in the literature. In \cite{fuchs2009stable, fuchs2009splitting,
koley2009higher, mishra2010stability}, only the transport term has been considered.
A linear resistive term has been added in \cite{koley2011higher}. Another variant
of the induction equation with Hall effect without discussion of boundary conditions
has been investigated in \cite{corti2012stable}.

The fundamental technique used in this article is the energy method, cf.
\cite[chapters 8 and 11]{gustafsson2013time}. Physically, it can be motivated as
follows. The magnetic energy is proportional to $\abs{B}^2$ and fulfils a
secondary balance law that can be determined using the induction equation
\eqref{eq:induction-full}, since $\partial_t \abs{B}^2 = 2 B \cdot \partial_t B$
for sufficiently smooth solutions. Boundary conditions have to be given such that
the magnetic energy remains bounded and can be estimated by given initial and
boundary data. This behaviour should hold for both the partial differential equation
(PDE) at the continuous level and the discrete variant.

Following the approach of \citet{nordstrom2017roadmap}, in order
to know what should be mimicked by the discretisation, the initial boundary value
problem (IBVP) will be investigated at first at the continuous level. Results
obtained there are also useful on their own and can be applied
to different discretisations, not only the ones considered in this article. Boundary
conditions will be imposed both strongly (i.e. by enforcing given boundary values
exactly) and weakly (i.e. by adding an appropriate penalty term to the PDE).

In order to mimic estimates obtained from the energy method semidiscretely,
summation by parts (SBP) derivative operators will be used \cite{kreiss1974finite,
strand1994summation}. The weak imposition of boundary conditions is mimicked via
simultaneous approximation terms (SATs) \cite{carpenter1994time, carpenter1999stable}.
Further information can be found in the review articles \cite{svard2014review,
fernandez2014review} and references cited therein.
One method to obtain energy estimates for problems with varying coefficients or
nonlinear ones is the application of certain splittings. Such techniques have been
used successfully in the literature, cf. \cite{olsson1994energy, yee2000entropy,
sandham2002entropy, nordstrom2006conservative, kennedy2008reduced, morinishi2010skew,
sjogreen2010skew, kopriva2014energy, gassner2016well, wintermeyer2017entropy,
ranocha2018generalised, ranocha2017extended, sjogreen2017skew, sjogreen2018high}.

Although SBP operators have been developed in the context of finite difference (FD)
methods and this setting will be used here, they can also be found in various other
frameworks including finite volume (FV) \cite{nordstrom2001finite, nordstrom2003finite},
discontinuous Galerkin (DG) \cite{gassner2013skew, fernandez2014generalized}, and
the recent flux reconstruction/correction procedure via reconstruction schemes
\cite{huynh2007flux, huynh2014high, ranocha2016summation}. Thus, basic results
about energy estimates and boundary conditions obtained here can also be applied
to these schemes.

This article is structured as follows. At first, the linear magnetic induction
equation using only the transport term is investigated in section~\ref{sec:transport-term}.
After the derivation of energy estimates and admissible boundary conditions,
the concept of SBP operators is briefly reviewed and applied to obtain stable
semidiscretisations. Afterwards, the nonlinear induction equation with Hall effect
is considered in section~\ref{sec:Hall-term}. Using the same basic approach,
energy stable outflow boundary conditions are proposed and studied both at the
continuous and the semidiscrete level. In section~\ref{sec:div-constraint}, the
focus lies on the divergence constraint. Since it has been used widely, the projection
method enforcing this constraint is studied from the point of view of the
energy method and corresponding boundary conditions are investigated.
Thereafter, results of numerical experiments are presented in section~\ref{sec:numerical-results}. Finally, a summary and discussion of the obtained results is given in section~\ref{sec:summary}.

\section{Linear Magnetic Induction Equation}
\label{sec:transport-term}

This section is focused on the transport term of the magnetic induction equation
\eqref{eq:induction-full}. Thus, the linear equation
\begin{equation}
\label{eq:induction-transport-conservative}
  \partial_t B
  =
  \nabla \times (u \times B)
\end{equation}
with divergence constraint $\div B = 0$ and suitable initial and boundary
conditions will be investigated. Therefore, the PDE will be rewritten using the
divergence constraint such that the energy rate can be calculated via
\begin{equation}
  \od{}{t} \norm{B}_{L^2(\Omega)}^2
  =
  \od{}{t} \int_\Omega \abs{B}^2
  =
  2 \int_\Omega B \cdot \partial_t B
\end{equation}
and inserting the PDE, leading to admissible boundary conditions. Implementing
these in a weak form yields an energy estimate involving given initial and boundary data.
Finally, using summation by parts operators, a semidiscretisation mimicking these
properties will be constructed.

\subsection{Continuous Setting}

The $i$-th component of the transport term $\nabla \times (u \times B)$ can be
written using the totally antisymmetric Levi-Civita symbol $\epsilon_{ijk}$ as
\begin{equation}
  \left[ \nabla \times (u \times B) \right]_i
  =
  \epsilon_{ijk} \partial_j \bigl( \epsilon_{klm} u_l B_m \bigr)
  =
  \partial_j \bigl( u_i B_j - u_j B_i \bigr),
\end{equation}
where summation over repeated indices is implied. In order to obtain an energy
estimate, the product rule can be used to rewrite the transport term as
\begin{equation}
\label{eq:induction-transport-energy-rate-1}
  \partial_j \bigl( u_i B_j - u_j B_i \bigr)
  =
  \underbrace{u_i \partial_j B_j}_{(i)}
  \underbrace{+ B_j \partial_j u_i
  - \frac{1}{2} B_i \partial_j u_j}_{(ii)}
  \underbrace{- \frac{1}{2} u_j \partial_j B_i
  - \frac{1}{2} \partial_j (u_j B_i)}_{(iii)}.
\end{equation}
If $\div B = 0$, the first term $(i)$ on the right hand side vanishes. Dropping it can in
general be interpreted as adding $- u \div B$ to the right hand side of the magnetic
induction equation \eqref{eq:induction-transport-conservative}, as studied
in \cite{godunov1972symmetric, powell1994approximate, powell1999solution} for the
MHD equations. Investigations in the context of numerical schemes for the
induction equation can be found in \cite{fuchs2009stable, koley2009higher,
mishra2010stability}. Without dropping the term $(i)$, the system is not symmetric.
Moreover, it cannot be symmetrised, and the energy method cannot be applied, cf.
\cite{mishra2010stability} for the two-dimensional case.

The terms $(ii)$ of \eqref{eq:induction-transport-energy-rate-1} contain no
derivatives of the magnetic field and can be interpreted as source terms describing
the influence of the particles on the magnetic field. The remaining terms $(iii$) contain
derivatives of $B$. Multiplying by the magnetic field and integrating over a
volume $\Omega$ such that the divergence theorem can be used yields
\begin{equation}
\begin{aligned}
  \int_\Omega B \cdot \partial_t B
  &=
  \int_\Omega \left(
      B_i B_j \partial_j u_i
    - \frac{1}{2} B_i B_i \partial_j u_j
    - \frac{1}{2} B_i u_j \partial_j B_i
    - \frac{1}{2} B_i \partial_j (u_j B_i)
  \right)
  \\
  &=
  \int_\Omega \left( B_i B_j \partial_j u_i - \frac{1}{2} B_i B_i \partial_j u_j \right)
  - \int_{\partial\Omega} \frac{1}{2} B_i B_i u_j \nu_j,
\end{aligned}
\end{equation}
where $\nu = (\nu_j)_j$ is the outward unit normal at $\partial\Omega$. This
proves
\begin{lemma}
\label{lem:energy-rate-transport}
  If the linear induction equation \eqref{eq:induction-transport-conservative}
  is written in the form
  \begin{equation}
  \label{eq:induction-transport-split}
    \partial_t B_i
    =
      B_j \partial_j u_i
    - \frac{1}{2} B_i \partial_j u_j
    - \frac{1}{2} u_j \partial_j B_i
    - \frac{1}{2} \partial_j (u_j B_i),
  \end{equation}
  the energy rate can be obtained using only integration by parts via
  \begin{equation}
  \label{eq:energy-rate-transport}
    \int_\Omega B \cdot \partial_t B
    =
    \int_\Omega \left( B_i B_j \partial_j u_i - \frac{1}{2} B_i B_i \partial_j u_j \right)
    - \int_{\partial\Omega} \frac{1}{2} B_i B_i u_j \nu_j.
  \end{equation}
\end{lemma}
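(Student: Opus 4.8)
The plan is to start directly from the split form \eqref{eq:induction-transport-split}, contract it with the magnetic field (i.e. multiply the $i$-th equation by $B_i$ and sum over $i$), and integrate over $\Omega$. On the left this produces $\int_\Omega B \cdot \partial_t B$, which is exactly the quantity to be expressed. The first two terms on the right, $B_j \partial_j u_i$ and $-\tfrac12 B_i \partial_j u_j$, contain no derivatives of $B$; upon contraction with $B_i$ they immediately give the volume integrand $B_i B_j \partial_j u_i - \tfrac12 B_i B_i \partial_j u_j$ appearing in \eqref{eq:energy-rate-transport}. Hence all the real work concerns the two remaining terms, the advective piece $-\tfrac12 u_j \partial_j B_i$ and the conservative piece $-\tfrac12 \partial_j(u_j B_i)$, which together constitute the skew-symmetric splitting of the transport operator.

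The key step is to show that these two $B$-derivative terms collapse to a pure boundary contribution. First I would rewrite the advective part, using the product rule, as $-\tfrac12 B_i u_j \partial_j B_i = -\tfrac14 u_j \partial_j (B_i B_i)$. For the conservative part I would apply the divergence theorem to $-\tfrac12 \int_\Omega B_i \partial_j(u_j B_i)$, moving the derivative onto $B_i$; this yields a surface term $-\tfrac12 \int_{\partial\Omega} B_i B_i u_j \nu_j$ together with a volume remainder $+\tfrac12 \int_\Omega u_j B_i \partial_j B_i = +\tfrac14 \int_\Omega u_j \partial_j(B_i B_i)$. Adding the two contributions, the interior integrals $\mp\tfrac14 \int_\Omega u_j \partial_j(B_i B_i)$ cancel identically, leaving precisely the boundary term $-\tfrac12 \int_{\partial\Omega} B_i B_i u_j \nu_j$ and thereby \eqref{eq:energy-rate-transport}.

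The main thing to get right — and the whole reason for writing the equation in the symmetric split form \eqref{eq:induction-transport-split} rather than the conservative form \eqref{eq:induction-transport-conservative} — is this exact cancellation of the interior volume terms. Using the conservative form alone, integration by parts would leave a nonvanishing interior term mixing $u_j$ and $\partial_j(B_i B_i)$ that cannot be controlled by boundary and source data; the averaged (skew-symmetric) splitting is engineered so that its advective and conservative halves produce equal and opposite volume contributions. I expect no genuine obstacle beyond careful bookkeeping of the factors of $\tfrac12$ and $\tfrac14$ and of the signs arising from the divergence theorem; the only regularity required is that needed to justify the divergence theorem, and I note that the constraint $\div B = 0$ has already been exploited earlier (to discard the term $(i)$ in \eqref{eq:induction-transport-energy-rate-1}) and is not invoked again at this stage.
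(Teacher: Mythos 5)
Your computation is correct and is essentially the paper's own argument: the paper combines the two $B$-derivative terms as $-\tfrac12 B_i u_j\partial_j B_i - \tfrac12 B_i\partial_j(u_jB_i) = -\tfrac12\partial_j(u_jB_iB_i)$ and applies the divergence theorem once, which is the same cancellation you obtain by integrating by parts on the conservative half only. One minor caveat on your closing aside: at the continuous level the conservative form alone does \emph{not} produce an uncontrollable interior term (a further integration by parts turns it into $-\tfrac12\int_\Omega(\partial_ju_j)B_iB_i$ plus a boundary term, which is bounded by $\norm{\nabla u}_{L^\infty}\norm{B}^2$); the split form is needed because the discrete operators lack a product rule, not because the continuous estimate would otherwise fail.
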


Following classical arguments for linear PDEs, boundary conditions should be given
such that an energy estimate can be obtained, cf. \cite{nordstrom2017roadmap}.
Concentrating on the surface term in \eqref{eq:energy-rate-transport}, an energy
growth can only occur if $u_j \nu_j = u \cdot \nu < 0$. Since $\nu$ is the outward
normal and $u$ the particle velocity, this corresponds exactly to the case of an inflow,
in accordance with physical intuition and the frozen in theorem \citep{alfven1942existence}.
Thus, the initial boundary value problem for the induction equation
\eqref{eq:induction-transport-split} becomes
\begin{equation}
\label{eq:induction-transport-strong}
\begin{aligned}
  \partial_t B_i
  &=
    B_j \partial_j u_i
  - \frac{1}{2} B_i \partial_j u_j
  - \frac{1}{2} u_j \partial_j B_i
  - \frac{1}{2} \partial_j (u_j B_i),
  && \text{in } (0,T) \times \Omega,
  \\
  B(t,x) &= B^b(t,x),
  && \text{if } u(t,x) \cdot \nu(x) < 0 \text{ on } \partial\Omega,
  \\
  B(0,x) &= B^0(x),
  && x \in \Omega,
\end{aligned}
\end{equation}
where $B^0$ and $B^b$ are given initial and boundary data. Using these supplementary
conditions, the magnetic energy can be estimated as follows.
\begin{lemma}
\label{lem:induction-transport-strong}
  A sufficiently smooth solution $B$ of the linear induction equation
  \eqref{eq:induction-transport-strong} fulfils
  \begin{equation}
  \label{eq:induction-transport-strong-energy-rate}
    \od{}{t} \norm{B(t)}_{L^2(\Omega)}^2
    =
    2 \int_\Omega B \cdot \partial_t B
    \leq
    9 \norm{\nabla u(t)}_{L^\infty(\Omega)} \norm{B(t)}_{L^2(\Omega)}^2
    + \norm{u(t)}_{L^\infty(\partial\Omega)} \norm{B^b(t)}_{L^2(\partial\Omega)}^2
  \end{equation}
  and
  \begin{equation}
  \label{eq:induction-transport-strong-energy-estimate}
    \norm{B(t)}_{L^2(\Omega)}^2
    \leq
    \exp\bigl( 9 \norm{\nabla u}_\infty t \bigr)
    \left(
      \norm{B^0}_{L^2(\Omega)}^2
      + \int_0^t \norm{u(t)}_{L^\infty(\partial\Omega)}
        \norm{B^b(t)}_{L^2(\partial\Omega)}^2 \dif t
    \right).
  \end{equation}
\end{lemma}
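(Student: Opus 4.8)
The plan is to start from the energy rate identity \eqref{eq:energy-rate-transport} of \autoref{lem:energy-rate-transport} and to estimate its volume and surface contributions separately, keeping careful track of the constants so that the factor $9$ in \eqref{eq:induction-transport-strong-energy-rate} emerges. Multiplying \eqref{eq:energy-rate-transport} by $2$ yields $\od{}{t}\norm{B(t)}_{L^2(\Omega)}^2$ directly, since $\partial_t\abs{B}^2 = 2\, B\cdot\partial_t B$ for smooth solutions, so it remains to bound the right-hand side.

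For the volume term I would bound the two summands pointwise. Using the convention that $\abs{\partial_j u_i} \leq \norm{\nabla u}_{L^\infty(\Omega)}$ for every pair $(i,j)$, the first summand satisfies $\abs{B_i B_j \partial_j u_i} \leq \norm{\nabla u}_{L^\infty(\Omega)}\bigl(\sum_i\abs{B_i}\bigr)^2 \leq 3\norm{\nabla u}_{L^\infty(\Omega)}\abs{B}^2$, where the last step is the Cauchy--Schwarz inequality $\bigl(\sum_{i=1}^3\abs{B_i}\bigr)^2 \leq 3\abs{B}^2$ in $\R^3$. For the second summand, $\abs{\partial_j u_j}=\abs{\div u}\leq 3\norm{\nabla u}_{L^\infty(\Omega)}$ gives $\tfrac12\abs{B}^2\abs{\div u}\leq \tfrac32\norm{\nabla u}_{L^\infty(\Omega)}\abs{B}^2$. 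Adding these produces $\tfrac92\norm{\nabla u}_{L^\infty(\Omega)}\abs{B}^2$, and after multiplication by $2$ and integration over $\Omega$ the volume part is bounded by $9\norm{\nabla u}_{L^\infty(\Omega)}\norm{B}_{L^2(\Omega)}^2$, exactly the first term on the right-hand side of \eqref{eq:induction-transport-strong-energy-rate}.

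The surface term $-\int_{\partial\Omega}\abs{B}^2\,(u\cdot\nu)$ (after the factor $2$) I would split according to the sign of $u\cdot\nu$. On the outflow part $u\cdot\nu\geq 0$ the integrand is nonpositive and may be discarded. On the inflow part $u\cdot\nu<0$ the Dirichlet condition $B=B^b$ from \eqref{eq:induction-transport-strong} applies, so the contribution equals $\int_{\{u\cdot\nu<0\}}\abs{B^b}^2\,\abs{u\cdot\nu}$, which is bounded by $\norm{u}_{L^\infty(\partial\Omega)}\norm{B^b}_{L^2(\partial\Omega)}^2$ after enlarging the domain of integration to all of $\partial\Omega$ (the integrand being nonnegative). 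This yields the second term in \eqref{eq:induction-transport-strong-energy-rate} and completes the differential inequality.

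Finally, \eqref{eq:induction-transport-strong-energy-estimate} follows from \eqref{eq:induction-transport-strong-energy-rate} by Gr\"onwall's inequality: writing $E(t)=\norm{B(t)}_{L^2(\Omega)}^2$ and $g(t)=\norm{u(t)}_{L^\infty(\partial\Omega)}\norm{B^b(t)}_{L^2(\partial\Omega)}^2$, the bound $E'(t)\leq 9\norm{\nabla u(t)}_{L^\infty(\Omega)}\,E(t)+g(t)$ integrates, via the standard integrating factor, to the stated estimate once $\int_0^t 9\norm{\nabla u(s)}_{L^\infty(\Omega)}\dif s$ is majorised by $9\norm{\nabla u}_\infty t$. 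I expect no genuine difficulty here; the only subtlety is bookkeeping, namely committing to the convention that $\norm{\nabla u}_{L^\infty}$ bounds each entry $\partial_j u_i$ and distributing the factor $2$ together with the two constants $3$ (one from Cauchy--Schwarz, one from $\abs{\div u}\le 3\norm{\nabla u}_\infty$) so that the coefficient comes out exactly as $9$, and correctly tracking the sign in the inflow/outflow split of the boundary integral.
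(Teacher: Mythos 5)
Your proposal is correct and follows essentially the same route as the paper's proof: the same pointwise bounds on the two volume summands (yielding the constants $3$ and $\tfrac32$, hence $9$ after doubling), the same inflow/outflow splitting of the surface integral with insertion of the Dirichlet data on the inflow part, and the same Grönwall step. The only cosmetic difference is that you justify $\sum_{i,j}\abs{B_i}\abs{B_j}\leq 3\abs{B}^2$ via Cauchy--Schwarz where the paper uses Young's inequality $\abs{B_i}\abs{B_j}\leq\tfrac12(\abs{B_i}^2+\abs{B_j}^2)$; both give the identical bound.
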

\begin{proof}
  If the particle velocity $u$ and its partial derivatives are bounded, the energy
  rate can be estimated using \eqref{eq:energy-rate-transport} via
  \begin{equation}
  \begin{aligned}
    \int_\Omega B \cdot \partial_t B
    &=
    \int_\Omega \left( B_i B_j \partial_j u_i - \frac{1}{2} B_i B_i \partial_j u_j \right)
    - \int_{\partial\Omega} \frac{1}{2} B_i B_i u_j \nu_j
    \\
    &\leq
    \norm{\nabla u(t)}_{L^\infty(\Omega)} \sum_{i,j} \int_\Omega \abs{B_i} \abs{B_j}
    + \frac{3}{2} \norm{\nabla u(t)}_{L^\infty(\Omega)} \int_\Omega \abs{B}^2
    + \frac{1}{2} \norm{u(t)}_{L^\infty(\partial\Omega)} \int_{\partial\Omega} \abs{B^b}^2.
  \end{aligned}
  \end{equation}
  For this estimate, the boundary $\partial\Omega$ has been divided into two parts:
  the inflow part $\partial\Omega_\mathrm{in}$ (where $u \cdot \nu < 0$) and the outflow
  part $\partial\Omega_\mathrm{out}$ (where $u \cdot \nu \geq 0$). On $\partial\Omega_\mathrm{in}$,
  the boundary condition $B = B^b$ has been inserted. The integral over
  $\partial\Omega_\mathrm{out}$ is non-positive, since $B_i B_i u_j \nu_j = \abs{B}^2 u \cdot \nu
  \geq 0$ there. The infinity norm is $\norm{\nabla u(t)}_{L^\infty(\Omega)} = \max_{i,j}
  \norm{\partial_j u_i(t)}_{L^\infty(\Omega)}$. Using
  \begin{equation}
    \sum_{i,j} \int_\Omega \abs{B_i} \abs{B_j}
    \leq
    \sum_{i,j} \int_\Omega \frac{1}{2} \Bigl( \abs{B_i}^2 + \abs{B_j}^2 \Bigr)
    =
    3 \norm{B}_{L^2(\Omega)}^2
  \end{equation}
  yields
  \begin{equation}
    \int_\Omega B \cdot \partial_t B
    \leq
    \frac{9}{2} \norm{\nabla u(t)}_{L^\infty(\Omega)} \norm{B(t)}_{L^2(\Omega)}^2
    + \frac{1}{2} \norm{u(t)}_{L^\infty(\partial\Omega)} \norm{B^b(t)}_{L^2(\partial\Omega)}^2.
  \end{equation}
  Abbreviating $\norm{\nabla u}_\infty = \max_{i,j}
  \norm{\partial_j u_i}_{L^\infty((0,T) \times \Omega)}$, the energy estimate
  \eqref{eq:induction-transport-strong-energy-estimate} follows due to Grönwall's
  inequality.
\end{proof}

Instead of the strong implementation of the boundary conditions as in
\eqref{eq:induction-transport-strong}, the boundary conditions can also be implemented
in a weak form. Since this form is related directly to semidiscretisations using
SBP operators and SATs, it will be used in the following. Therefore, a lifting
operator $L$ is used. Similar to a Dirac measure concentrated on the boundary
$\partial\Omega$, it fulfils
\begin{equation}
  \int_\Omega u \cdot L(\psi) = \int_{\partial\Omega} \phi \cdot \psi
\end{equation}
for smooth (and possibly vector valued) functions $\phi,\psi$, cf. \cite{arnold2002unified,
wang2009unifying, nordstrom2017roadmap}. In the semidiscrete setting, such a lifting
operator is mainly given by a multiplication by the inverse grid size as described
in the following subsection. Imposing the boundary data $B^b$ weakly yields the IBVP
\begin{equation}
\label{eq:induction-transport-weak}
\begin{aligned}
  \partial_t B_i
  &=
    B_j \partial_j u_i
  - \frac{1}{2} B_i \partial_j u_j
  - \frac{1}{2} u_j \partial_j B_i
  - \frac{1}{2} \partial_j (u_j B_i)
  \\
  &\phantom{=} + L\bigl( \1{u \cdot \nu < 0} (u \cdot \nu) (B_i - B^b_i) \bigr),
  && \text{in } (0,T) \times \Omega,
  \\
  B(0,x) &= B^0(x),
  && x \in \Omega,
\end{aligned}
\end{equation}
where $\1{u \cdot \nu < 0}$ is one where $u \cdot \nu < 0$ and zero elsewhere.
Similar to the strong form of the boundary conditions, this yields
\begin{lemma}
\label{lem:induction-transport-weak}
  A sufficiently smooth solution $B$ of the linear induction equation
  \eqref{eq:induction-transport-weak} with weak implementation of the boundary
  condition satisfies the energy estimate for the strong implementation given in
  Lemma~\ref{lem:induction-transport-strong}. If the boundary condition is not
  fulfilled exactly, there is an additional dissipative term.
\end{lemma}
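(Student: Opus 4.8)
The plan is to compute the energy rate $\int_\Omega B \cdot \partial_t B$ for the weak formulation \eqref{eq:induction-transport-weak} and to show that the penalty (lifting) term simultaneously reproduces the boundary bound of the strong case and contributes an extra non-positive term. First I would split the right-hand side of \eqref{eq:induction-transport-weak} into the transport part, which has exactly the split form \eqref{eq:induction-transport-split} already treated in Lemma~\ref{lem:energy-rate-transport}, and the lifting term. Multiplying by $B_i$ and integrating over $\Omega$, the transport part contributes precisely the right-hand side of \eqref{eq:energy-rate-transport}, that is, the two volume terms together with the surface term $-\int_{\partial\Omega} \frac12 \abs{B}^2\, u_j \nu_j$.

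For the lifting contribution I would invoke the defining property of $L$, tested against $B$, with the argument $\1{u \cdot \nu < 0}\,(u \cdot \nu)\,(B_i - B^b_i)$. This turns the volume integral into the boundary integral $\int_{\partial\Omega} \1{u \cdot \nu < 0}\,(u \cdot \nu)\, B \cdot (B - B^b)$, so the only difference to the strong case lies in the surface contributions. On the outflow part $u \cdot \nu \geq 0$ the indicator vanishes and the remaining term $-\frac12 \abs{B}^2\, u \cdot \nu$ is non-positive, exactly as before.

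The key step is the analysis on the inflow part $u \cdot \nu < 0$. Writing $a := u \cdot \nu < 0$ and adding the two surface integrands, a completion of the square gives
\[
  -\tfrac12 a \abs{B}^2 + a\, B \cdot (B - B^b)
  = \tfrac{a}{2} \abs{B - B^b}^2 - \tfrac{a}{2} \abs{B^b}^2.
\]
Here the first summand is the announced additional dissipative term: it is non-positive because $a < 0$ and vanishes exactly when the boundary condition $B = B^b$ holds. The second summand is bounded by $\frac12 \abs{u \cdot \nu}\, \abs{B^b}^2 \leq \frac12 \norm{u}_{L^\infty(\partial\Omega)} \abs{B^b}^2$.

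Finally I would collect the pieces. Dropping the non-positive dissipative term and integrating the inflow remainder yields a boundary bound of at most $\frac12 \norm{u}_{L^\infty(\partial\Omega)} \norm{B^b}_{L^2(\partial\Omega)}^2$, identical to the strong case, while the two volume terms are estimated exactly as in the proof of Lemma~\ref{lem:induction-transport-strong} by $\frac92 \norm{\nabla u}_{L^\infty(\Omega)} \norm{B}_{L^2(\Omega)}^2$. This reproduces the energy rate \eqref{eq:induction-transport-strong-energy-rate}, and Grönwall's inequality then gives \eqref{eq:induction-transport-strong-energy-estimate}. I expect the only real obstacle to be the sign bookkeeping in the completion of the square: one must check that the coefficient $(u \cdot \nu)$ chosen in the penalty term is exactly what makes the square $\frac{u \cdot \nu}{2} \abs{B - B^b}^2$ dissipative while the leftover $-\frac{u \cdot \nu}{2} \abs{B^b}^2$ matches the strong boundary bound; everything else is a direct application of the earlier lemmas.
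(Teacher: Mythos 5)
Your proposal is correct and follows essentially the same route as the paper: convert the lifting term into a boundary integral via the defining property of $L$, observe that the outflow contribution is unchanged, and complete the square on the inflow part so that $\frac{u\cdot\nu}{2}\abs{B-B^b}^2$ emerges as the non-positive dissipative term while $-\frac{u\cdot\nu}{2}\abs{B^b}^2$ reproduces the boundary bound of the strong implementation. The sign bookkeeping in your completion of the square checks out, so nothing further is needed.
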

\begin{proof}
  As in the proof of Lemma~\ref{lem:induction-transport-strong}, the energy rate
  can be estimated using \eqref{eq:energy-rate-transport} via
  \begin{equation}
    \int_\Omega B \cdot \partial_t B
    =
    \int_\Omega \left( B_i B_j \partial_j u_i - \frac{1}{2} B_i B_i \partial_j u_j \right)
    - \int_{\partial\Omega} \frac{1}{2} B_i B_i u_j \nu_j
    + \int_\Omega B_i L\bigl( \1{u \cdot \nu < 0} u_j \nu_j (B_i - B^b_i) \bigr).
  \end{equation}
  Only the last term on the right hand side is new and can be rewritten as
  \begin{equation}
    \int_\Omega B_i L\bigl( \1{u \cdot \nu < 0} u_j \nu_j (B_i - B^b_i) \bigr)
    =
    \int_{\partial\Omega} \1{u \cdot \nu < 0} u_j \nu_j B_i (B_i - B^b_i).
  \end{equation}
  Thus, the surface terms are
  \begin{equation}
    - \int_{\partial\Omega} \left(
      \frac{1}{2} B_i B_i u_j \nu_j
      - \1{u \cdot \nu < 0} u_j \nu_j B_i (B_i - B^b_i)
    \right)
    =
    - \int_{\partial\Omega} u_j \nu_j \left(
      \frac{1}{2} B_i B_i - \1{u \cdot \nu < 0} (B_i B_i - B_i B^b_i)
    \right).
  \end{equation}
  The integrand is the same as for the strong implementation of the boundary
  conditions where $u \cdot \nu \geq 0$, i.e. $- u_j \nu_j B_i B_i \leq 0$.
  Elsewhere, the integrand is
  \begin{equation}
    - u_j \nu_j \left(
      \frac{1}{2} B_i B_i - (B_i B_i - B_i B^b_i)
    \right)
    =
    - \frac{1}{2} u_j \nu_j B^b_i B^b_i
    + \frac{1}{2} \underbrace{u_j \nu_j}_{< 0} (B_i - B^b_i) (B_i - B^b_i)
    \leq
    - \frac{1}{2} u_j \nu_j B^b_i B^b_i.
  \end{equation}
  Hence, an additional dissipative term
  \begin{equation}
    - \int_{\partial\Omega} \1{u \cdot \nu < 0} u_j \nu_j (B_i - B^b_i) (B_i - B^b_i)
    \leq
    0
  \end{equation}
  appears in the estimate of the energy rate $\od{}{t} \norm{B(t)}_{L^2(\Omega)}^2$
  compared to the strong form of the boundary condition.
\end{proof}

\subsection{Summation by Parts Operators}
\label{sec:SBP}

Using the formulation \eqref{eq:induction-transport-weak} of the magnetic induction
equation with weak implementation of the boundary condition, the estimates of the
energy rate \eqref{eq:induction-transport-strong-energy-rate} and of the energy
\eqref{eq:induction-transport-strong-energy-estimate} have been obtained using
only integration by parts and properties of the lifting operator $L$. Thus, these
have to be mimicked discretely in order to obtain similar estimates at the semidiscrete
level. Summation by parts operators and simultaneous approximation terms are these
discrete analogues.

Before presenting a semidiscretisation of \eqref{eq:induction-transport-weak},
the concept of SBP operators will be described briefly. Since finite difference
methods on Cartesian grids will be used in the following, the one dimensional
setting is described at first.

The given domain $\Omega = [x_L, x_R]$ is discretised as a uniform grid with nodes
$x_L = x_1 < x_2 < \dots < x_N = x_R$. A function $u$ is represented discretely as
a vector $(u^{(a)})_a$, where the components are the values at the grid nodes, i.e.
$u^{(a)} = u(x_a)$. Nonlinear operations are performed componentwise. Thus, the product
of two functions $u$ and $v$ is represented by the Hadamard product of the corresponding
vectors, i.e. $(u v)^{(a)} = u^{(a)} v^{(a)}$. By a slight abuse of notation, $u$
may represent the vector of coefficients or the diagonal multiplication matrix $\diag{u}$,
performing this multiplication of discretised functions.

Since summation by parts should mimic integration by parts, derivatives
and integrals have to be discretised. Therefore, the derivative operator is represented
by a matrix $D$, i.e. $D u \approx \partial_x u$. The integral over $\Omega$ is
interpreted as the $L^2$ scalar product and represented by a symmetric and positive
definite norm/mass matrix\footnote{The name ``mass matrix'' is common for finite
element methods such as discontinuous Galerkin methods, while ``norm matrix'' is
more common in the finite difference community. Here, both names will be used
equivalently.} $M$, i.e.
\begin{equation}
  u^T M v
  =
  \scp{u}{v}_M
  \approx
  \scp{u}{v}_{L^2(\Omega)}
  =
  \int_\Omega u \cdot v.
\end{equation}
Since boundary nodes are included, integration with respect to the outer unit normal
$\nu$ at $\partial\Omega$ as in the divergence theorem is given by the difference of
boundary values. This bilinear form is represented by the matrix $E = \diag{-1,0,\dots,0,1}$.
Together, these operators mimic integration by parts discretely via
\begin{gather}
\label{eq:SBP-IBP}
  \begin{array}{ccc}%{*3{>{\displaystyle}c}}
    \underbrace{
      u^T M D v
      + u^T D^T M v
    }
    & = &
    \underbrace{
      u^T E v,
    }
    \\
    \rotatebox{90}{$\!\approx\;$}
    &&
    \rotatebox{90}{$\!\!\approx\;$}
    \\
    \overbrace{
      \int_{x_L}^{x_R} u \, (\partial_x v)
      + \int_{x_L}^{x_R} (\partial_x u) \, v
    }
    & = &
    \overbrace{
      u \, v \big|_{x_L}^{x_R}
    },
  \end{array}
\end{gather}
if the SBP property
\begin{equation}
\label{eq:SBP}
  M D + D^T M = E
\end{equation}
is fulfilled. Finally, the discrete version of the lifting operator $L$ is
$M^{-1} \abs{E}$, since
\begin{equation}
  \int_\Omega u \cdot L(v)
  \approx
  u^T M \left( M^{-1} \abs{E} v \right)
  =
  u^T \abs{E} v
  =
  \bigl( u^{(N)} \cdot v^{(N)} + u^{(1)} \cdot v^{(1)} \bigr)
  \approx
  \int_{\partial\Omega} u \cdot v.
\end{equation}

Here, only diagonal norm SBP operators are considered, i.e. those SBP operators
with diagonal mass matrices $M$. In this case, discrete integrals are evaluated
using the quadrature provided by the weights of the diagonal mass matrix. For
classical diagonal norm SBP operators, the order of accuracy is $2p$ in the
interior and $p$ at the boundaries, allowing a global convergence order of
$p+1$ for hyperbolic problems \cite{svard2006order, svard2014note}. Here, SBP
operators will be referred to by their interior order of accuracy $2p$.

\begin{example}
\label{ex:SBP-2}
  The classical second order accurate SBP operators are
  \begin{equation}
  \label{eq:SBP-2}
    D
    =
    \frac{1}{2 \Delta x}
    \begin{pmatrix}
      -2 & 2 \\
      -1 & 0 & 1 \\
      & \ddots & \ddots & \ddots \\
      && -1 & 0 & 1 \\
      &&& -2 & 2
    \end{pmatrix},
    \qquad
    M
    =
    \Delta x
    \begin{pmatrix}
      \frac{1}{2} \\
      & 1 \\
      && \ddots \\
      &&& 1 \\
      &&&& \frac{1}{2}
    \end{pmatrix},
  \end{equation}
  where $\Delta x$ is the grid spacing.
  Thus, the first derivative is given by the standard second order central derivative
  in the interior and by one sided derivative approximations at the boundaries.
\end{example}

In multiple space dimensions, tensor product operators will be used, i.e. the one
dimensional SBP operators are applied accordingly in each dimension. Thus, they
are of the form
\begin{equation}
  D_1 = D_x \otimes \I_y \otimes \I_z, \quad
  D_2 = \I_x \otimes D_y \otimes \I_z, \quad
  D_3 = \I_x \otimes \I_y \otimes D_z,
\end{equation}
where $\I_{x,y,z}$ are identity matrices and $D_{x,y,z}$ are one dimensional SBP
derivative operators in the corresponding coordinate directions. The boundary
operators are
\begin{equation}
  E_1 = E_x \otimes M_y \otimes M_z, \quad
  E_2 = M_x \otimes E_y \otimes M_z, \quad
  E_3 = M_x \otimes M_y \otimes E_z.
\end{equation}
They fulfil $u^T E_i v \approx \int_{\partial\Omega} u \cdot v  \, \nu_i$. Sometimes,
the boundary integral operator
\begin{equation}
  E = \abs{E_1} + \abs{E_2} + \abs{E_3}
\end{equation}
will be used. Finally, the mass matrix is $M = M_x \otimes M_y \otimes M_z$.
\begin{remark}
  The standard tensor product discretisations of the divergence and curl operators
  given above satisfy $\div \curl = 0$, since the discrete derivative operators
  commute, i.e. $D_j D_i = D_i D_j$. However, the imposition of boundary conditions
  has to be taken into account. Thus, the discrete divergence of the magnetic field
  will not remain zero, even if the initial data are discretely divergence free.
  Hence, even the direct discretisation of the conservative form of
  $\nabla \times (u \times B)$ without source term will not result in discretely
  divergence free magnetic fields. Thus, the divergence constraint will be considered
  in more detail in section~\ref{sec:div-constraint}.
\end{remark}

\subsection{Semidiscrete Setting}
\label{sec:transport-term-semidiscrete}

Replacing derivatives $\partial_j$  by SBP operators $D_j$ and the lifting operator of
terms multiplied by $\nu_j$ by $M^{-1} E_j$ results in the following semidiscretisation
of the linear induction equation \eqref{eq:induction-transport-weak} with weak
implementation of the boundary conditions.
\begin{equation}
\label{eq:induction-transport-semidiscrete}
\begin{aligned}
  \partial_t B_i
  &=
    B_j D_j u_i
  - \frac{1}{2} B_i D_j u_j
  - \frac{1}{2} u_j D_j B_i
  - \frac{1}{2} D_j (u_j B_i)
  \\
  &\phantom{=} + M^{-1} E_j \bigl( \1{u \cdot \nu < 0} u_j (B_i - B^b_i) \bigr),
  && \text{for } t \in (0,T),
  \\
  B(0) &= B^0.
\end{aligned}
\end{equation}

\begin{remark}
  The surface term in \eqref{eq:induction-transport-semidiscrete} can also be written
  using numerical fluxes as in finite volume and discontinuous Galerkin methods.
  Indeed,
  \begin{equation}
    \1{u \cdot \nu < 0} u_j (B_i - B^b_i)
    =
    u_j B_i - u_j \Bnum_i,
  \end{equation}
  where $\Bnum$ is the upwind numerical flux, i.e. $\Bnum = B^b$ where $u \cdot \nu < 0$
  and $\Bnum = B$ elsewhere.
\end{remark}

The semidiscrete energy rate can be obtained analogously to the one in the continuous
setting. Indeed, the calculations leading to Lemma~\ref{lem:energy-rate-transport}
are mimicked as follows. The semidiscrete energy rate is
\begin{multline}
  \od{}{t} \norm{B}_M^2
  =
  2 B_i^T M \partial_t B_i
  =
    2 B_i^T M B_j D_j u_i
  - B_i^T M B_i D_j u_j
  - B_i^T M u_j D_j B_i
  - B_i^T M D_j (u_j B_i)
  \\
  + 2 B_i^T E_j \bigl( \1{u \cdot \nu < 0} u_j (B_i - B^b_i) \bigr).
\end{multline}
Since multiplication is performed componentwise and the mass matrix is diagonal,
\begin{equation}
  B_i^T M B_j D_j u_i = B_i^T B_j^T M D_j u_i,
\end{equation}
where $B_j = B_j^T$ is the diagonal multiplication matrix containing the coefficients
of $B_j$ on the diagonal. Thus, using the SBP property \eqref{eq:SBP},
\begin{multline}
  \od{}{t} \norm{B}_M^2
  =
    2 B_i^T M B_j D_j u_i
  - B_i^T M B_i D_j u_j
  - B_i^T M u_j D_j B_i
  - B_i^T E_j (u_j B_i)
  + B_i^T D_j^T M (u_j B_i)
  \\
  + 2 B_i^T E_j \bigl( \1{u \cdot \nu < 0} u_j (B_i - B^b_i) \bigr).
\end{multline}
Since $B_i^T M u_j D_j B_i = B_i^T D_j^T M (u_j B_i)$, this can be rewritten as
\begin{equation}
\label{eq:induction-transport-semidiscrete-energy-rate-1}
  \od{}{t} \norm{B}_M^2
  =
    2 B_i^T M B_j D_j u_i
  - B_i^T M B_i D_j u_j
  - B_i^T E_j (u_j B_i)
  + 2 B_i^T E_j \bigl( \1{u \cdot \nu < 0} u_j (B_i - B^b_i) \bigr).
\end{equation}
The first two terms on the right hand side mimic the volume terms
$\int_\Omega \left( 2 B_i B_j \partial_j u_i - B_i B_i \partial_j u_j \right)$
as in \eqref{eq:energy-rate-transport} and the other two terms mimic the surface
terms appearing for the weak implementation of the boundary condition. Thus,
an analogous estimate can be obtained. Indeed, rewriting the surface term as in
the proof of Lemma~\ref{lem:induction-transport-weak},
\begin{equation}
  \od{}{t} \norm{B}_M^2
  \leq
    2 \norm{D u(t)}_{\ell^\infty} \sum_{i,j} \abs{B_i}^T M \abs{B_j}
  + 3 \norm{D u(t)}_{\ell^\infty} \abs{B_i}^T M \abs{B_i}
  + \norm{u(t)}_{\ell^\infty} \abs{B^b_i}^T E \abs{B^b_i}.
\end{equation}
Here, the absolute value $\abs{B_i}$ should be considered componentwise and the
discrete $\ell^\infty$ norm is $\norm{D u(t)}_{\ell^\infty} = \max_{i,j}
\norm{D_j u_i}_{l^\infty}$.
Proceeding as in the proof of Lemma~\ref{lem:induction-transport-weak} results in
\begin{lemma}
\label{lem:induction-transport-semidiscrete}
  A sufficiently smooth solution $B$ of the semidiscrete linear induction equation
  \eqref{eq:induction-transport-semidiscrete} satisfies
  \begin{equation}
  \label{eq:induction-transport-semidiscrete-energy-rate}
    \od{}{t} \norm{B(t)}_M^2
    \leq
    9 \norm{D u(t)}_{\ell^\infty} \norm{B(t)}_M^2
    + \norm{u(t)}_{\ell^\infty} \norm{B^b(t)}_E^2
  \end{equation}
  and
  \begin{equation}
  \label{eq:induction-transport-semidiscrete-energy-estimate}
    \norm{B(t)}_{M}^2
    \leq
    \exp\bigl( 9 \norm{D u}_\infty t \bigr)
    \left(
      \norm{B^0}_M^2
      + \int_0^t \norm{u(t)}_{\ell^\infty}
        \norm{B^b(t)}_{E}^2 \dif t
    \right).
  \end{equation}
  Thus, this semidiscretisation is energy stable.
\end{lemma}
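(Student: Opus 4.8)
The plan is to mirror the continuous-level arguments of Lemmas~\ref{lem:induction-transport-strong} and~\ref{lem:induction-transport-weak} at the discrete level, exploiting the two structural ingredients that make the semidiscretisation behave like the PDE: the SBP property~\eqref{eq:SBP}, which replaces integration by parts, and the diagonality of the mass matrix $M$, which lets the diagonal multiplication matrices $B_j$ commute with $M$ and permits a node-by-node application of Young's inequality. Since the semidiscrete energy rate~\eqref{eq:induction-transport-semidiscrete-energy-rate-1} and the intermediate bound stated just before the lemma are already in hand, only a short finishing argument remains.

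First I would estimate the quadratic cross term $\sum_{i,j} \abs{B_i}^T M \abs{B_j}$. Because $M$ is diagonal with positive entries, the scalar inequality $\abs{B_i}\abs{B_j} \leq \tfrac12(\abs{B_i}^2 + \abs{B_j}^2)$ holds at each node and yields
\begin{equation}
  \sum_{i,j} \abs{B_i}^T M \abs{B_j}
  \leq
  \sum_{i,j} \frac12 \bigl( \abs{B_i}^T M \abs{B_i} + \abs{B_j}^T M \abs{B_j} \bigr)
  =
  3 \norm{B}_M^2,
\end{equation}
exactly as in the continuous case. Substituting this into the intermediate inequality, the two volume contributions combine as $2\,\norm{D u}_{\ell^\infty}\cdot 3\norm{B}_M^2 + 3\,\norm{D u}_{\ell^\infty}\norm{B}_M^2 = 9\,\norm{D u}_{\ell^\infty}\norm{B}_M^2$, while the surface contribution is $\norm{u}_{\ell^\infty}\norm{B^b}_E^2$; this is the energy-rate bound~\eqref{eq:induction-transport-semidiscrete-energy-rate}. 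Then I would integrate this differential inequality by Grönwall's inequality, precisely as in Lemma~\ref{lem:induction-transport-strong}, abbreviating $\norm{D u}_\infty = \max_{i,j}\norm{D_j u_i}_{\ell^\infty}$ over all times in $(0,T)$, to obtain~\eqref{eq:induction-transport-semidiscrete-energy-estimate}. As the right hand side depends only on the initial data $B^0$ and the boundary data $B^b$, this is precisely the statement of semidiscrete energy stability.

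The only step requiring care — already carried out in deriving the intermediate inequality — is the treatment of the discrete boundary term, and this is where I expect the main obstacle to lie. As in Lemma~\ref{lem:induction-transport-weak}, the boundary contribution built from the operators $E_j$ must be split into an inflow part (where $u\cdot\nu<0$) and an outflow part (where $u\cdot\nu\geq 0$): the SAT supplies a nonpositive dissipative term at inflow nodes while the outflow part is automatically nonpositive. The point to verify is that this discrete inflow/outflow splitting mimics the continuous divergence-theorem split faithfully; it does, because each $E_j$ is built from the one-dimensional boundary operator $E = \diag{-1,0,\dots,0,1}$ and diagonal mass matrices, so the sign structure of $u_j\nu_j$ is reproduced node-by-node. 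No genuinely new difficulty arises beyond this, every continuous step having an exact discrete counterpart.
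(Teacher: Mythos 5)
Your proposal is correct and follows essentially the same route as the paper, which likewise derives the semidiscrete energy rate via the SBP property and the diagonal mass matrix, bounds the cross term $\sum_{i,j}\abs{B_i}^T M \abs{B_j}$ by $3\norm{B}_M^2$ using the node-wise Young inequality, handles the boundary term exactly as in the weak-implementation Lemma~\ref{lem:induction-transport-weak}, and concludes with Grönwall's inequality. The arithmetic $2\cdot 3 + 3 = 9$ and the resulting estimates match the paper's statement.
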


\begin{remark}
  If multiple blocks/elements $\Omega^l$ are used to discretise the total domain
  $\Omega$, these blocks have to be coupled. This coupling can be done via surface
  terms, analogously to the weak imposition of boundary conditions. Suppose that
  the particle velocity $u$ is discretised as a continuous function across the boundaries,
  which seems to be quite natural if $u$ is given, e.g. in a hybrid model. Then,
  the discrete values of $u \cdot \nu$ at a point on the boundary between two blocks
  $\Omega^{l_1}, \Omega^{l_2}$ satisfy $u^{l_1} \cdot \nu^{l_1} = - u^{l_2} \cdot \nu^{l_2}$,
  since $u^{l_1} = u^{l_2}$ and $\nu^{l_1} = -\nu^{l_2}$ because of opposite outward
  unit normals. Thus, a boundary condition has to be specified at one of the two
  blocks (if $u \neq 0$) or none of them (if $u=0$). Setting the desired boundary
  value $B^b$ to the value of $B$ from the other block corresponds to the application
  of the upwind numerical flux as in finite volume or discontinuous Galerkin methods.
  This coupling of multiple
  blocks is energy stable if conforming block interfaces (i.e. matching nodes) are
  used. Although central fluxes could be used as well to give an energy estimate,
  the application of upwind numerical fluxes yields additional stabilisation and
  improved properties concerning e.g. the numerical error, cf. \cite{nordstrom2007error,
  kopriva2017error, offner2018error2}.
\end{remark}

\subsection{Different Formulations and Implementation}
\label{sec:implementation}

Discretising the split form $\frac{1}{2} \bigl( B_i \partial_j u_j + u_j \partial_j B_i
+ \partial_j (u_j B_i) \bigr)$ instead of the conservative form $\partial_j(u_j B_i)$
might seem to be computationally expensive at first. However, the loops appearing in
the (block-banded) matrix vector multiplication can be fused, resulting in less additional
cost.

Another drawback that might be attributed to a split form discretisation concerns
weak solutions. If discontinuities appear in the solution, e.g. due to nonlinearities
if the MHD equations are discretised by an operator splitting approach or the particle
velocity is obtained via a particle simulation in a hybrid model, the discretisation
should be conservative in the light of the classical Lax-Wendroff theorem
\cite{lax1960systems}. However, split form discretisations such as $\frac{1}{2} \bigl(
B_i D_j u_j + u_j D_j B_i + D_j (u_j B_i) \bigr)$ can be written in a conservative
way if classical central differences are used in periodic domains or diagonal norm
SBP operators are used in bounded domains, cf. \cite{ducros2000high,
pirozzoli2010generalized, fisher2013discretely, fisher2013highJCP, gassner2016split}.
\begin{example}
  Consider the split-form discretisation $-\frac{1}{2} \bigl(B_i D_j u_j + u_j D_j B_i
  + D_j (u_j B_i) \bigr)$ using the second order SBP operator from Example~\ref{ex:SBP-2}
  in the interior. Using upper indices to indicate the grid nodes, the derivative
  in $x_1$ direction is
  \begin{multline}
    - \frac{1}{2} B_i^{(a,b,c)} \frac{u_1^{(a+1,b,c)} - u_1^{(a-1,b,c)}}{2 \Delta x_1}
    - \frac{1}{2} u_1^{(a,b,c)} \frac{B_i^{(a+1,b,c)} - B_i^{(a-1,b,c)}}{2 \Delta x_1}
    - \frac{1}{2} \frac{(u_1 B_i)^{(a+1,b,c)} - (u_1 B_i)^{(a-1,b,c)}}{2 \Delta x_1}
    \\
    =
    - \frac{1}{\Delta x_1} \!\left(\!
      \frac{\left( u_1^{(a,b,c)} + u_1^{(a+1,b,c)} \right)\!
            \left( B_i^{(a,b,c)} + B_i^{(a+1,b,c)} \right)}{4}
      -
      \frac{\left( u_1^{(a,b,c)} + u_1^{(a-1,b,c)} \right)\!
            \left( B_i^{(a,b,c)} + B_i^{(a-1,b,c)} \right)}{4}
    \!\right).
  \end{multline}
  This discretisation is conservative with numerical flux
  \begin{equation}
    f^{\mathrm{num},1}_{m,k}
    =
    \frac{1}{4}
    \left( u_1^{(m)} + u_1^{(k)} \right)
    \left( B_i^{(m)} + B_i^{(k)} \right),
  \end{equation}
  where $m,k$ represent the Cartesian indices $(a,b,c),(a \pm 1,b,c)$. The direct
  discretisation $-D_j(u_j B_i)$ of the conservative form can be written similarly as
  \begin{multline}
    - \frac{(u_1 B_i)^{(a+1,b,c)} - (u_1 B_i)^{(a-1,b,c)}}{2 \Delta x_1}
    \\
    =
    - \frac{1}{\Delta x_1} \left(
        \frac{u_1^{(a,b,c)} B_i^{(a,b,c)} + u_1^{(a+1,b,c)} B_i^{(a+1,b,c)}}{2}
        -
        \frac{u_1^{(a,b,c)} B_i^{(a,b,c)} + u_1^{(a-1,b,c)} B_i^{(a-1,b,c)}}{2}
    \right).
  \end{multline}
  The boundary terms can be handled similarly. Thus, both discretisations are
  conservative.
\end{example}
Using symmetric numerical fluxes, high-order conservative semidiscretisations
can be obtained for conservation laws, cf. \cite{fisher2013highJCP, chen2017entropy,
ranocha2017comparison}. For the discretisations considered here, the arithmetic
mean value
\begin{equation}
  \mean{v}_{m,k} = \frac{v^{(m)} + v^{(k)}}{2}
\end{equation}
suffices to obtain the central form $D(v w)$ (via $\mean{v w}$), the split
form $\frac{1}{2} \bigl(v D w + w D v + D(v w) \bigr)$ (via $\mean{v} \mean{w}$),
and the product form $v D w + w D v$ (via $2 \mean{v}\mean{w} - \mean{v w}$).

If a source term such as $- u \div B$ is added to the induction equation as in
\eqref{eq:induction-transport-split}, symmetric numerical fluxes do not suffice
anymore to represent the semidiscretisations. Then, extended numerical fluxes
containing non-symmetric terms can be used to describe the semidiscretisations in
a unified way, cf. \cite{ranocha2017shallow}, \cite[Section 4]{bouchut2004nonlinear},
and references cited therein. Therefore, not only the mean value but also the jump
\begin{equation}
  \jump{v}_{m,k} = v_k - v_m
\end{equation}
will be used.

The general form of the semidiscretisations considered here is
\begin{equation}
\label{eq:induction-equation-transport-semidiscrete-general}
  \partial_t B_i^{(m)}
  =
  \VOL_i^{(m)} + \SURF_i^{(m)},
\end{equation}
where $\VOL_i^{(m)}$ is a discretisation of the volume term, i.e. an approximation
of $\partial_j(u_i B_j - u_j B_i)$, and $\SURF_i^{(m)}$ is a surface term, i.e.
the SAT in \eqref{eq:induction-transport-semidiscrete} that is nonzero only at
the boundary nodes,
\begin{equation}
\label{eq:induction-equation-transport-semidiscrete-surface}
  \SURF
  =
  M^{-1} E_j \bigl( \1{u \cdot \nu < 0} u_j (B - B^b) \bigr).
\end{equation}
\begin{remark}
  The classical second order SBP operator \eqref{eq:SBP-2} has a special form
  only directly at the boundary nodes. Higher order SBP operators use more nodes
  near the boundary with modified stencil. Nevertheless, the surface term $\SURF$
  is nontrivial only directly at the boundaries.
\end{remark}
The general form of the volume term is
\begin{equation}
\label{eq:induction-equation-transport-semidiscrete-volume}
  \VOL^{(m)}
  =
  \sum_{j=1}^3 \sum_k 2 (D_j)_{m,k} \fextj_{m,k},
\end{equation}
where $\fextj_{m,k}$ is an extended numerical flux in space direction $j$.
The discretisation $B_j D_j u - \frac{1}{2} B D_j u_j - \frac{1}{2} u_j D_j B
- \frac{1}{2} D_j (u_j B)$ of \eqref{eq:induction-transport-semidiscrete} is
obtained by choosing
\begin{equation}
  \fextj_{m,k}
  =
  \underbrace{2 \mean{B_j}_{m,k} \mean{u}_{m,k} - \mean{B_j u}_{m,k}}
    _{= \frac{1}{2} \bigl( B_j^{(m)} u^{(k)} + B_j^{(k)} u^{(m)} \bigr) }
  \underbrace{- \frac{1}{2} u^{(m)} \jump{B_j}_{m,k}}
    _{= - \frac{1}{2} u^{(m)} \bigl( B_j^{(k)} - B_j^{(m)} \bigr) }
  \underbrace{- \mean{u_j}_{m,k} \mean{B}_{m,k}}
    _{\mathrlap{\!\!= - \frac{1}{4} \bigl( u_j^{(m)} + u_j^{(k)} \bigr)
                                    \bigl( B^{(m)} + B^{(k)} \bigr) }}.
\end{equation}
The first two terms generate the nonconservative form $B_j D_j u + u D_j B_j$,
the third term generates the source term $- u D_j B_j$, and the last term generates
the split discretisation $-\frac{1}{2} \bigl( B D_j u_j + u_j D_j B + D_j (u_j B) \bigr)$.
Indeed,
\begin{equation}
\begin{aligned}
  &\phantom{=}
  \VOL^{(m)}
  =
  \sum_{j=1}^3 \sum_k 2 (D_j)_{m,k} \fextj_{m,k}
  \\
  &=
  \sum_{j=1}^3 \sum_k (D_j)_{m,k} \left(
    B_j^{(m)} u^{(k)} + B_j^{(k)} u^{(m)} - u^{(m)} \bigl( B_j^{(k)} - B_j^{(m)} \bigr)
    - \frac{1}{2} \bigl( u_j^{(m)} + u_j^{(k)} \bigr) \bigl( B^{(m)} + B^{(k)} \bigr)
  \right)
  \\
  &=
  \left[
    B_j D_j u - \frac{1}{2} \bigl( B D_j u_j + u_j D_j B + D_j (u_j B) \bigr)
  \right]^{(m)},
\end{aligned}
\end{equation}
where $\sum_k (D_j)_{m,k} = 0$ has been used, since $D_j$ is a consistent approximation
of the derivative. The same result can also be obtained by another choice of the
extended numerical fluxes corresponding to $\partial_j(B_j u)$ and $- u \div B$.
Indeed, both terms can be discretised as split forms via
\begin{equation}
\label{eq:extended-fluxes-product-and-source}
  \underbrace{2 \mean{B_j}_{m,k} \mean{u}_{m,k} - \mean{B_j u}_{m,k}}_{\leadsto \partial_j(B_j u)}
  \underbrace{- \frac{1}{2} u^{(m)} \jump{B_j}_{m,k}}_{\leadsto - u \partial_j B_j}
  =
  \underbrace{\mean{B_j}_{m,k} \mean{u}_{m,k}}_{\leadsto \partial_j(B_j u)}
  \underbrace{- \frac{1}{2} \mean{u}_{m,k} \jump{B_j}_{m,k}}_{\leadsto - u \partial_j B_j}.
\end{equation}

Thus, there are some obvious possibilities to discretise the volume terms of the
linear induction equation $\partial_t B_i = \partial_j (u_i B_j - u_j B_i)$,
possibly augmented with source term $-u_i \div B$, listed in Table~\ref{tab:uiBj-forms},
Table~\ref{tab:source-forms}, and Table~\ref{tab:ujBi-forms}. Besides the choice
of adding a source term or not, the forms are equivalent at the continuous level
for smooth functions due to the product rule. However, a discrete product rule is
impossible for general high order discretisations, cf. \cite{ranocha2018mimetic}.

\begin{table}[!ht]
\centering
\begin{small}
\addtolength\tabcolsep{-1.5pt}
  \caption{Different discretisations of the term $\partial_j(u_i B_j)$
           in the linear induction equation \eqref{eq:induction-transport-split}.}
  \label{tab:uiBj-forms}
  \begin{tabular*}{\linewidth}{@{\extracolsep{\fill}}*3c@{}}
    \toprule
    Form & Discretisation & Extended Numerical Flux
    \\
    \midrule
    Central
    & $D_j (u_i B_j)$
    & $\mean{u_i B_j}_{m,k}
      = \frac{1}{2} \Bigl( u_i^{(m)} B_j^{(m)} + u_i^{(k)} B_j^{(k)} \Bigr)$
    \\
    Split
    & $\frac{1}{2} \bigl( D_j(u_i B_j) + u_i D_j B_j + B_j D_j u_i \bigr)$
    & $\mean{u_i}_{m,k} \mean{B_j}_{m,k}
      = \frac{1}{4} \Bigl( u_i^{(m)} + u_i^{(k)} \Bigr) \Bigl( B_j^{(m)} + B_j^{(k)} \Bigr)$
    \\
    Product
    & $u_i D_j B_j + B_j D_j u_i$
    & $2 \mean{u_i}_{m,k} \mean{B_j}_{m,k} - \mean{u_i B_j}_{m,k}
      = \frac{1}{2} \Bigl( u_i^{(m)} B_j^{(k)} + u_i^{(k)} B_j^{(m)} \Bigr)$
    \\
    \bottomrule
  \end{tabular*}

  \bigskip

  \caption{Different discretisations of the source term $- u_i \partial_j B_j$
           for the linear induction equation \eqref{eq:induction-transport-split}.}
  \label{tab:source-forms}
  \begin{tabular*}{\linewidth}{@{\extracolsep{\fill}}*3c@{}}
    \toprule
    Form & Discretisation & Extended Numerical Flux
    \\
    \midrule
    Zero
    & $0$
    & $0$
    \\
    Central
    & $- u_i D_j B_j$
    & $- \frac{1}{2} u_i^{(m)} \jump{B_j}_{m,k}
      =  - \frac{1}{2} u_i^{(m)} \Bigl( B_j^{(k)} - B_j^{(m)} \Bigr)$
    \\
    Split
    & $- \frac{1}{2} \bigl(u_i D_j B_j + D_j (u_i B_j) - B_j D_j u_i \bigr)$
    & $- \frac{1}{2} \mean{u_i}_{m,k} \jump{B}_{m,k}
      = - \frac{1}{4} \Bigl( u_i^{(m)} + u_i^{(k)} \Bigr) \Bigl( B_j^{(k)} - B_j^{(m)} \Bigr)$
    \\
    \bottomrule
  \end{tabular*}

  \bigskip

  \caption{Different discretisations of the term $- \partial_j(u_j B_i)$
           in the linear induction equation \eqref{eq:induction-transport-split}.}
  \label{tab:ujBi-forms}
  \begin{tabular*}{\linewidth}{@{\extracolsep{\fill}}*3c@{}}
    \toprule
    Form & Discretisation & Extended Numerical Flux
    \\
    \midrule
    Central
    & $- D_j (u_j B_i)$
    & $- \mean{u_j B_i}_{m,k}
      = - \frac{1}{2} \Bigl( u_j^{(m)} B_i^{(m)} + u_j^{(k)} B_i^{(k)} \Bigr)$
    \\
    Split
    & $- \frac{1}{2} \bigl( D_j(u_j B_i) + u_j D_j B_i + B_i D_j u_j \bigr)$
    & $- \mean{u_j}_{m,k} \mean{B_i}_{m,k}
      = - \frac{1}{4} \Bigl( u_j^{(m)} + u_j^{(k)} \Bigr) \Bigl( B_i^{(m)} + B_i^{(k)} \Bigr)$
    \\
    Product
    & $- \bigl( u_j D_j B_i + B_i D_j u_j \bigr)$
    & $- 2 \mean{u_j}_{m,k} \mean{B_i}_{m,k} + \mean{u_j B_i}_{m,k}
      = - \frac{1}{2} \Bigl( u_j^{(m)} B_i^{(k)} + u_j^{(k)} B_i^{(m)} \Bigr)$
    \\
    \bottomrule
  \end{tabular*}
\end{small}
\end{table}

\begin{remark}
  In the current article, the split form \eqref{eq:induction-transport-split} is
  used to motivate energy estimates and the consideration of different forms of
  the induction equation. Other applications of split forms can be found e.g. in \cite{olsson1994energy, nordstrom2006conservative}.
\end{remark}

\begin{remark}
  Several different split forms and source terms of the ideal MHD equations have
  been compared numerically in \cite{sjogreen2017skew}. If present, the source
  term $- u \div B$ has been discretised via the central extended flux. Different
  numerical fluxes have been used for the other terms.
\end{remark}

\begin{remark}
  Entropy conservative numerical fluxes for the ideal MHD equations can be found
  in \cite{winters2016affordable, chandrashekar2016entropy, sjogreen2018high}.
  They contain products of some averages but with additional terms compared to
  the split form fluxes for the induction equation.
\end{remark}

\begin{remark}
  Energy stability of semidiscretisations can be transferred to fully discrete
  schemes if implicit time integrators with the SBP property are used, cf.
  \cite{nordstrom2013summation, lundquist2014sbp, boom2015high, nordstrom2018energy}.
  In this article, explicit time integration schemes will be used, since they
  can be implemented efficiently and easily on modern HPC hardware such as GPUs.
  For linear problems with semibounded operators, such explicit schemes can also
  be shown to be energy stable, cf. \cite{tadmor2002semidiscrete, ranocha2018L2stability}.
\end{remark}

\subsection{Energy Stability of Other Semidiscretisations}
\label{sec:energy-stability-of-other-forms}

In \cite{mishra2010stability}, stable schemes for the linear magnetic induction
equation \eqref{eq:induction-transport-conservative} have been derived by applying
the principle of frozen coefficients to the conservative form of $-\partial_j(u_j B_i)$.
Thus, the central flux has been used for $-\partial_j (u_j B_i)$ and the
discretisation of the other terms corresponds to the choice of the extended fluxes
as in \eqref{eq:extended-fluxes-product-and-source}.
In that article, the equivalence of strong stability for semidiscretisations of
linear symmetric hyperbolic systems using the conservative and the product form
has been proven. Thus, also the application of the product flux for $-\partial_j(u_j B_i)$
yields an energy estimate.

In order to make this article sufficiently self-contained, a brief description
of the approach to energy estimates for the other forms is given in the following
for diagonal mass matrices $M$. Then, discrete integration and multiplication
commute. The key is the following result of \cite[section~2]{mishra2010stability}.
\begin{lemma}
\label{lem:product-rule-error}
  For every discrete derivative operator $D$ with diagonal mass matrix $M$, there
  is a constant $C > 0$ such that for every smooth function $v$ and any grid
  function $w$
  \begin{equation}
    \norm{D(v w) - v D w}_M \leq C \norm{\partial_x v}_{L^\infty} \norm{w}_M.
  \end{equation}
\end{lemma}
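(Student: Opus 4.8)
The plan is to recognise the left-hand side as a commutator and then bound it entrywise, using the smoothness of $v$ to cancel the $1/\Delta x$ scaling hidden in the derivative matrix. Writing $\diag{v}$ for the diagonal multiplication matrix associated with $v$, the componentwise product $v w$ is $\diag{v}\, w$, so that
\begin{equation*}
  D(v w) - v \, D w = \bigl( D \diag{v} - \diag{v} D \bigr) w =: [D, \diag{v}]\, w,
\end{equation*}
and the entries of this commutator are $[D, \diag{v}]_{ab} = D_{ab}\,(v_b - v_a)$, with $v_a = v(x_a)$. The key observation is that the problematic factor $1/\Delta x$ carried by $D_{ab}$ is compensated by the difference $v_b - v_a$, which is small for nearby nodes; note that no discrete product rule and not even consistency $D \mathbbm{1} = 0$ are needed, since the difference $v_b-v_a$ already absorbs any constant part of $v$.

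First I would use smoothness of $v$: by the mean value theorem $\abs{v_b - v_a} \le \norm{\partial_x v}_{L^\infty} \abs{x_b - x_a} = \norm{\partial_x v}_{L^\infty} \abs{a - b}\, \Delta x$. Next I would invoke the two structural properties of a finite difference SBP operator: it has a fixed bandwidth $r$, independent of the number of nodes $N$ (a central interior stencil together with a fixed-size boundary closure), so $D_{ab} = 0$ whenever $\abs{a - b} > r$; and its nonzero entries scale like $\abs{D_{ab}} \le c_D / \Delta x$. Combining these on the band yields, for every pair $(a,b)$,
\begin{equation*}
  \abs{[D, \diag{v}]_{ab}}
  = \abs{D_{ab}} \, \abs{v_b - v_a}
  \le c_D \, r \, \norm{\partial_x v}_{L^\infty},
\end{equation*}
so the commutator is a banded matrix whose entries are uniformly bounded by a multiple of $\norm{\partial_x v}_{L^\infty}$, with no remaining power of $\Delta x$.

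Then I would pass from this entrywise bound to an operator-norm bound. Since $[D, \diag{v}]$ has at most $2r + 1$ nonzero entries per row and per column, each bounded as above, a Schur test (equivalently, a Gershgorin-type estimate giving $\norm{A}_2 \le \sqrt{\norm{A}_1 \norm{A}_\infty}$) yields $\norm{[D, \diag{v}]}_2 \le (2r+1)\, c_D\, r\, \norm{\partial_x v}_{L^\infty}$ for the $\ell^2$ operator norm, uniformly in $\Delta x$ and $N$. Finally, because $M$ is diagonal with weights comparable to $\Delta x$, bounded above and below away from zero, say $s_- \Delta x \le M_{aa} \le s_+ \Delta x$, the two norms satisfy $s_- \Delta x\, \norm{\cdot}_{\ell^2}^2 \le \norm{\cdot}_M^2 \le s_+ \Delta x\, \norm{\cdot}_{\ell^2}^2$. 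The common factor $\sqrt{\Delta x}$ then cancels when estimating $\norm{[D,\diag{v}]\, w}_M$ against $\norm{w}_M$, giving the claim with $C = \sqrt{s_+/s_-}\,(2r+1)\, c_D\, r$, which depends only on the fixed SBP operator.

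The main obstacle is keeping the constant $C$ independent of the grid resolution: every ingredient must scale correctly in $\Delta x$, and the argument relies crucially on the bandwidth of $D$, the entry bound $c_D/\Delta x$, and the norm-equivalence weights $s_\pm$ all being independent of $N$. The only point requiring care is the boundary closure, where the stencil differs from the interior central difference; but since it comprises only a fixed number of rows with a fixed number of entries, each still bounded by $c_D/\Delta x$ and coupling only nearby nodes, the same entrywise estimate — and hence the same operator-norm bound — applies there as well.
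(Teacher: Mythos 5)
The paper does not prove this lemma at all --- it is quoted from \cite[section~2]{mishra2010stability} and used as a black box --- so there is no in-paper proof to compare against. Your argument is correct and is essentially the standard proof of this commutator estimate for finite difference SBP operators: the identity $D(vw) - vDw = [D,\diag{v}]w$ with entries $D_{ab}(v_b - v_a)$, the cancellation of the $1/\Delta x$ in $\abs{D_{ab}}$ against the $O(\abs{a-b}\,\Delta x)$ increment of the smooth function $v$ on the band, the Schur/H\"older bound $\norm{A}_2 \leq \sqrt{\norm{A}_1\norm{A}_\infty}$ for the resulting uniformly bounded banded matrix, and the $N$-independent equivalence of $\norm{\cdot}_M$ and $\sqrt{\Delta x}\,\norm{\cdot}_{\ell^2}$ for diagonal-norm operators are all sound, and you correctly identify that the entire content of the lemma is the uniformity of $C$ in the grid resolution. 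The one point worth flagging is that your proof uses structural hypotheses --- fixed bandwidth $r$ and entries bounded by $c_D/\Delta x$, plus mass-matrix weights comparable to $\Delta x$ from above and below --- that are not part of the literal statement (``every discrete derivative operator with diagonal mass matrix''); these hold for every classical FD SBP operator used in this paper (e.g.\ \eqref{eq:SBP-2} and \eqref{eq:SBP-4}), but the argument would not transfer verbatim to dense diagonal-norm operators such as spectral collocation derivatives, so the assumptions should be stated explicitly if the lemma is to be read in full generality.
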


Applying the energy method to the conservative form of $-\partial_j(u_j B_i)$ yields
\begin{equation}
  - 2 B_i^T M D_j (u_j B_i)
  =
  - B_i^T E_j u_j B_i
  + B_i^T D_j^T M u_j B_i
  - B_i^T M D_j (u_j B_i).
\end{equation}
The surface term is the same as for the split form discretisation, cf.
\eqref{eq:induction-transport-semidiscrete-energy-rate-1}. The remaining volume
terms satisfy
\begin{equation}
  \abs{B_i^T D_j^T M u_j B_i - B_i^T M D_j (u_j B_i)}
  \leq
  \norm{B_i}_M \norm{u_j D_j B_i - D_j (u_j B_i)}_M
  \leq
  C \norm{\nabla u}_{L^\infty} \norm{B}_M^2
\end{equation}
for some constant $C$ due to Lemma~\ref{lem:product-rule-error}. Thus, an energy
estimate can be obtained. Similarly, applying the energy method to the product
form discretisation of $-\partial_j(u_j B_i)$ yields
\begin{equation}
\begin{aligned}
  - 2 B_i^T M u_j D_j B_i - 2 B_i^T M B_i D_j u_j
  =
  - B_i^T E_j u_j B_i
  + B_i^T u_j^T D_j^T M B_i
  - B_i^T M u_j D_j B_i
  - 2 B_i^T M B_i D_j u_j.
\end{aligned}
\end{equation}
The surface term is again the same as for the split form discretisation and
the remaining terms can be estimated using Lemma~\ref{lem:product-rule-error},
since
\begin{equation}
  \abs{B_i^T u_j^T D_j^T M B_i - B_i^T M u_j D_j B_i}_M
  \leq
  \norm{B_i}_M \norm{D_j (u_j B_i) - u_j D_j B_i}_M
  \leq
  C \norm{\nabla u}_{L^\infty} \norm{B}_M^2
\end{equation}
and
\begin{equation}
  \abs{- 2 B_i^T M B_i D_j u_j}
  \leq
  6 \norm{D u}_{\ell^\infty} \norm{B}_M^2.
\end{equation}
Thus, an energy estimate can be obtained.

It is also possible to combine the central discretisation of the source term
$- u_i \partial_j B_j$ with other forms of $\partial_j (u_i B_j)$. Indeed, applying
the energy method to the source term and the central discretisation of
$\partial_j (u_i B_j)$ yields a volume term that can be estimated as
\begin{equation}
  \abs{2 B_i^T M D_j (u_i B_j) - 2 B_i^T M u_i D_j B_j}
  \leq
  2 \norm{B}_M \norm{D_j (u_i B_j) - u_i D_j B_j}_M
  \leq
  C \norm{\nabla u}_{L^\infty} \norm{B}_M^2
\end{equation}
for some constant $C$ due to Lemma~\ref{lem:product-rule-error}. This can be
compared to the corresponding upper bound $6 \norm{D u}_{\ell^\infty} \norm{B}_M^2$
appearing in the proof of Lemma~\ref{lem:induction-transport-semidiscrete}.
Similarly, applying the split form discretisation of $\partial_j (u_i B_j)$
results in
\begin{equation}
\begin{aligned}
  &\quad\,
  \abs{B_i^T M D_j (u_i B_j) + B_i^T M u_i D_j B_j + B_i^T M B_j D_j u_i
  - 2 B_i^T M u_i D_j B_j}
  \\
  &\leq
  \abs{B_i^T M D_j (u_i B_j) - B_i^T M u_i D_j B_j}
  + \abs{B_i^T M B_j D_j u_i}
  \\
  &\leq
  C \norm{\nabla u}_{L^\infty} \norm{B}_M^2
  + 3 \norm{D u}_{\ell^\infty} \norm{B}_M^2,
\end{aligned}
\end{equation}
for some $C > 0$ due to Lemma~\ref{lem:product-rule-error} and an energy estimate
can be obtained.

Finally, the split form discretisation of the source term can also be used to
obtain an energy estimate. This is summed up in
\begin{proposition}
\label{prop:induction-transport-semidiscrete}
  The semidiscretisations \eqref{eq:induction-equation-transport-semidiscrete-general}
  of the linear induction equation \eqref{eq:induction-transport-conservative}
  using the surface terms \eqref{eq:induction-equation-transport-semidiscrete-surface}
  as SATs and the volume terms \eqref{eq:induction-equation-transport-semidiscrete-volume},
  where the extended numerical fluxes are given by any combination of terms in
  Tables~\ref{tab:uiBj-forms}, \ref{tab:source-forms}, and~\ref{tab:ujBi-forms}
  with non-zero source terms, lead to an energy estimate.
\end{proposition}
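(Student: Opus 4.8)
The plan is to exploit the linearity of the energy method. Since $\od{}{t}\norm{B}_M^2 = 2 B_i^T M \partial_t B_i$ and $\partial_t B_i$ decomposes into a contribution from Table~\ref{tab:uiBj-forms}, one from Table~\ref{tab:source-forms}, one from Table~\ref{tab:ujBi-forms}, and the SAT, the energy rate splits into four pieces that can be treated independently. I would group them as the $-\partial_j(u_j B_i)$ block and the combined $\partial_j(u_i B_j)$/source block, since these play different roles: the former carries the surface term that the SAT is designed to absorb, whereas the latter must be rendered purely volumetric.

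First I would dispose of the Table~\ref{tab:ujBi-forms} block. For the central and product forms this was already done above, yielding in both cases the surface term $-B_i^T E_j u_j B_i$ together with volume terms controlled by $C\norm{\nabla u}_{L^\infty}\norm{B}_M^2$ via Lemma~\ref{lem:product-rule-error} and the direct bound $\abs{2 B_i^T M B_i D_j u_j} \leq 6\norm{Du}_{\ell^\infty}\norm{B}_M^2$. Because the split form is the arithmetic mean of the central and product forms, its energy contribution is the mean of the two, hence produces the \emph{same} surface term $-B_i^T E_j u_j B_i$ and an analogously bounded volume part. Thus, independently of the chosen form, this block contributes exactly the surface term of \eqref{eq:induction-transport-semidiscrete-energy-rate-1} plus a bounded remainder.

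The core of the argument, and the step where the non-zero source hypothesis enters, is the combined treatment of the Table~\ref{tab:uiBj-forms} and Table~\ref{tab:source-forms} blocks. The key observation is that for every one of the six admissible pairings the combined contribution rearranges into
\begin{equation}
  a\, B_i^T M \bigl( D_j(u_i B_j) - u_i D_j B_j \bigr) + b\, B_i^T M B_j D_j u_i
\end{equation}
with small integer coefficients $a,b$; for instance the central/central pairing gives $a=2,\,b=0$, the product/central pairing gives $a=0,\,b=2$, and the split/split pairing gives $a=0,\,b=2$. Crucially, neither summand requires the SBP property \eqref{eq:SBP}, so no boundary operator $E_j$ is generated: the first is bounded directly by $\abs{a}\norm{B}_M\norm{D_j(u_i B_j) - u_i D_j B_j}_M \leq C\norm{\nabla u}_{L^\infty}\norm{B}_M^2$ using Lemma~\ref{lem:product-rule-error}, and the second by $3\abs{b}\norm{Du}_{\ell^\infty}\norm{B}_M^2$. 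This is precisely why the source must be non-zero: the source term $-u_i D_j B_j$ is the discrete analogue of $-u_i\div B$, i.e. of removing the contribution $(i)$ in \eqref{eq:induction-transport-energy-rate-1}, and it is exactly what cancels the $B$-derivative hidden inside $\partial_j(u_i B_j)$. With the zero source this $B$-derivative survives, \eqref{eq:SBP} must be invoked on $D_j(u_i B_j)$ alone, and an uncontrolled surface term of the form $B_i^T E_j u_i B_j$ appears, matching the continuous fact that the conservative form without $-u\div B$ cannot be symmetrised.

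Finally I would assemble the pieces. Adding the three blocks and the SAT term $2 B_i^T E_j\bigl(\1{u\cdot\nu<0}u_j(B_i - B_i^b)\bigr)$ leaves exactly the surface combination of \eqref{eq:induction-transport-semidiscrete-energy-rate-1}, plus volume terms bounded by $C\norm{\nabla u}_{L^\infty}\norm{B}_M^2$. The surface terms are then estimated verbatim as in the proof of Lemma~\ref{lem:induction-transport-semidiscrete}: splitting $\partial\Omega$ into inflow and outflow nodes, the outflow part is non-positive and the inflow part is bounded by $\norm{u}_{\ell^\infty}\norm{B^b}_E^2$. This gives $\od{}{t}\norm{B}_M^2 \leq C\norm{\nabla u}_{L^\infty}\norm{B}_M^2 + \norm{u}_{\ell^\infty}\norm{B^b}_E^2$, whence Grönwall's inequality yields the energy estimate. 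I expect the bookkeeping of the six pairings to be the most error-prone part, but the uniform representation above reduces all of them to the two already-estimated model terms, so the real obstacle is conceptual rather than computational: recognising that a non-zero source is exactly the ingredient that keeps the $\partial_j(u_i B_j)$ block free of uncontrolled boundary contributions.
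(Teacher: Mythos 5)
Your proposal is correct and follows essentially the same route as the paper: decompose the energy rate into the $-\partial_j(u_j B_i)$ block (whose three forms all produce the identical surface term $-B_i^T E_j u_j B_i$ plus volume remainders controlled by Lemma~\ref{lem:product-rule-error}) and the combined $\partial_j(u_i B_j)$/source block (which, for non-zero source, is purely volumetric and bounded by the same commutator estimate plus the direct bound on $B_i^T M B_j D_j u_i$), then conclude as in Lemma~\ref{lem:induction-transport-semidiscrete}. Your uniform $a\,B_i^T M\bigl(D_j(u_i B_j)-u_i D_j B_j\bigr)+b\,B_i^T M B_j D_j u_i$ representation and the averaging argument for the split form are merely a tidier bookkeeping of the case-by-case analysis the paper carries out (and in fact cover the split-source pairings, which the paper only asserts without detail).
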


Although there are energy estimates for various types of schemes, the behaviour
of the solutions and the numerical error can be different for fixed grids. Thus,
this will be investigated in numerical experiments in section~\ref{sec:numerical-results}.

\subsection{Bounds on the Divergence}
\label{sec:bounds-on-divB}

Another motivation for adding the source term $-u \div B$ to the linear induction
equation \eqref{eq:induction-transport-conservative} is given by the following
well-known observation. Taking the divergence of the resulting PDE with source
term yields
\begin{equation}
\label{eq:transport-eq-divB}
  \partial_t \div B
  =
  \div ( \nabla \times (u \times B) - u \div B )
  =
  - \div (u \div B).
\end{equation}
Thus, the divergence of $B$ satisfies a linear transport equation with velocity
$u$ and divergence errors can possibly be transported out of the domain.

However, boundary conditions are important for the preservation of the divergence
constraint. While no detailed investigation will be conducted here, a simple example
is given in the following. Based thereon, it might seem to be questionable to
obtain bounds on the (discrete) divergence of $B$ using only bounds on the velocity
$u$ and the magnetic field itself.

\begin{example}
  Consider the velocity $u(x,y,z) = (1,0,0)$ and the initial condition $B^0 \equiv 0$
  in the domain $[0,\pi] \times [0,1]^2$. Then, a boundary condition has to be
  specified exactly at the $x=0$ boundary. Choose the boundary data $B^b_i(t) =
  \delta_{i1} \sin(n t)$
  for $n \in \N$. The solution of the IBVP for the linear induction equation
  with source term, $\partial_t B = \nabla \times (u \times B) - u \div B$,
  is given by
  \begin{equation}
    B_2 \equiv 0 \equiv B_3,
    \quad
    B_1(t,x,y,z)
    =
    \begin{cases}
      0, & x-t > 0, \\
      \sin\bigl( n (t-x) \bigr), & \text{else}.
    \end{cases}
  \end{equation}
  For $t > \pi$, the solution satisfies
  \begin{equation}
    \norm{B(t)}_{L^2(\Omega)}^2
    =
    \int_0^\pi \sin\bigl( n (t-x) \bigr)^2 \dif x
    =
    \frac{\pi}{2}
  \end{equation}
  and
  \begin{equation}
    \norm{\div B(t)}_{L^2(\Omega)}^2
    =
    \int_0^\pi \bigl( - n \cos\bigl( n (t-x) \bigr) \bigr)^2 \dif x
    =
    n^2 \frac{\pi}{2}
    \to \infty, \quad n \to \infty.
  \end{equation}
  Thus, there is a sequence of solutions with uniformly bounded norms and unbounded
  norms of the divergence, even if only the interior of $\Omega$ is considered
  for the latter.
\end{example}

\section{Nonlinear Magnetic Induction Equation}
\label{sec:Hall-term}

In this section, the nonlinear Hall magnetic induction equation
\begin{equation}
\label{eq:induction-transport-conservative-Hall}
  \partial_t B
  =
  \nabla \times (u \times B)
  - \nabla \times \biggl( \frac{\nabla \times B}{\rho} \times B \biggr)
\end{equation}
with divergence constraint $\div B = 0$ and suitable initial and boundary
conditions will be investigated, following the same principle ideas as in the
previous section. However, this problem is more complicated due to the nonlinear
second derivatives. Using the results of section~\ref{sec:transport-term}, a
source term $- u \div B$ is added to the right hand side and a splitting is used.
This yields
\begin{equation}
\label{eq:induction-transport-split-Hall}
  \partial_t B
  =
  (B \cdot \nabla) u - \frac{1}{2} \left( B (\div u) + (u \cdot \nabla) B + \div(B \otimes u) \right)
  - \nabla \times \biggl( \frac{\nabla \times B}{\rho} \times B \biggr).
\end{equation}
The investigation in this section follows basically the outline given in
\cite{nordstrom2017roadmap, nordstrom2018energy}.

\subsection{Continuous Setting}

Using the results section~\ref{sec:transport-term}, the transport term can be
handled similarly, i.e. the product rule can be used and a source term $- u \div B$
can be added to formulate the linear part in a way allowing to estimate the energy
rate. Hence, the nonlinear term has to be considered next. For a sufficiently
smooth solution, setting $A := \frac{1}{\rho} (\nabla \times B) \times B$, the
contribution of the Hall term to the energy rate can be calculated via
\begin{equation}
\label{eq:int-B-dot-curl-A}
\begin{aligned}
  - \int_\Omega B \cdot (\nabla \times A)
  =
  - \int_\Omega B_i \epsilon_{ijk} \partial_j A_k
  &=
  \int_\Omega \epsilon_{ijk} A_k \partial_j B_i
  - \int_{\partial\Omega} \epsilon_{ijk} B_i \nu_j A_k
  \\
  &=
  - \int_\Omega A \cdot (\nabla \times B)
  - \int_{\partial\Omega} (A \times B)  \cdot \nu.
\end{aligned}
\end{equation}
Here, $A \cdot (\nabla \times B) = 0$, since $A = \frac{1}{\rho} (\nabla \times B) \times B$.
Hence, the Hall term is conservative with respect to the magnetic energy and
yields the surface term
\begin{equation}
  - \int_{\partial\Omega}
    \biggl( \biggl(\frac{\nabla \times B}{\rho} \times B \biggr) \times B \biggr)
    \cdot \nu
  =
  - \int_{\partial\Omega} \left(
      \biggl( B \cdot \frac{\nabla \times B}{\rho} \biggr) B
      - \abs{B}^2 \frac{\nabla \times B}{\rho}
  \right) \cdot \nu.
\end{equation}
This term has to be added to the surface term $- \int_{\partial\Omega} \frac{1}{2}
B_i B_i u_j \nu_j$ of the linear induction equation, cf. \eqref{eq:energy-rate-transport}.
Thus, a smooth solution $B$ of \eqref{eq:induction-transport-split-Hall} satisfies
\begin{multline}
  \frac{1}{2} \od{}{t} \norm{B}_{L^2(\Omega)}^2
  =
  \int_\Omega B \cdot \partial_t B
  =
  \int_\Omega \left( B_i B_j \partial_j u_i - \frac{1}{2} B_i B_i \partial_j u_j \right)
  \\
  - \int_{\partial\Omega} \left(
      \frac{1}{2} \abs{B}^2 u
      + \biggl( B \cdot \frac{\nabla \times B}{\rho} \biggr) B
      - \abs{B}^2 \frac{\nabla \times B}{\rho}
  \right) \cdot \nu.
\end{multline}
The integrand of the surface term can also be written using $\I_3 = \diag{1,1,1}$ as
\begin{equation}
\label{eq:integrand-quadratic-form}
  \left(
    \frac{1}{2} \abs{B}^2 u
    + \biggl( B \cdot \frac{\nabla \times B}{\rho} \biggr) B
    - \abs{B}^2 \frac{\nabla \times B}{\rho}
  \right) \cdot \nu
  =
  \vect{B \\ \frac{\nabla \times B}{\rho}}^T
  \begin{pmatrix}
    \bigl( \frac{u}{2} - \frac{\nabla \times B}{\rho} \bigr) \cdot \nu \I_3
    & \frac{1}{2} B \cdot \nu \I_3 \\
    \frac{1}{2} B \cdot \nu \I_3 & 0
  \end{pmatrix}
  \vect{B \\ \frac{\nabla \times B}{\rho}}.
\end{equation}
This is a quadratic form with coefficients depending on the solution itself,
contrary to linear equations \cite{nordstrom2017roadmap}. However, the matrix is
still symmetric and therefore diagonalisable. Here, the eigenvalues are
\begin{equation}
  \lambda_\pm
  =
  \frac{1}{2} \left(
    \Bigl( \frac{u}{2} - \frac{\nabla \times B}{\rho} \Bigr) \cdot \nu
    \pm \sqrt{
      \biggl( \Bigl( \frac{u}{2} - \frac{\nabla \times B}{\rho} \Bigr) \cdot \nu \biggr)^2
      + (B \cdot \nu)^2 }
  \right),
\end{equation}
and the corresponding eigenvectors are given by
\begin{equation}
  v_\pm^1 = \vect{\lambda_\pm, 0, 0, \frac{B \cdot \nu}{2}, 0, 0}^T,
  \quad
  v_\pm^2 = \vect{0, \lambda_\pm, 0, 0, \frac{B \cdot \nu}{2}, 0}^T,
  \quad
  v_\pm^3 = \vect{0, 0, \lambda_\pm , 0, 0, \frac{B \cdot \nu}{2}}^T,
\end{equation}
if $B \cdot \nu \neq 0$. Three different cases can occur:
\begin{enumerate}
  \item $B \cdot \nu \neq 0$.\\
  In this case, each of the eigenvalues $\lambda_+ > 0$ and $\lambda_- < 0$ has
  geometric multiplicity three.

  \item $B \cdot \nu = 0$ and $\bigl( \frac{u}{2} - \frac{\nabla \times B}{\rho} \bigr) \cdot \nu \neq 0$.\\
  In this case, there are the threefold eigenvalues zero and
  $\bigl( \frac{u}{2} - \frac{\nabla \times B}{\rho} \bigr) \cdot \nu \neq 0$.

  \item $B \cdot \nu = 0$ and $\bigl( \frac{u}{2} - \frac{\nabla \times B}{\rho} \bigr) \cdot \nu = 0$.\\
  In this case, the matrix occurring in \eqref{eq:integrand-quadratic-form} is
  simply zero.
\end{enumerate}
Thus, depending on the number of negative eigenvalues, it can be expected that three
(case~1 or case~2 with $\bigl( \frac{u}{2} - \frac{\nabla \times B}{\rho} \bigr) \cdot \nu < 0$)
or zero (otherwise) boundary conditions can be imposed, cf. \cite{nordstrom2017roadmap,
nordstrom2018energy}.

In order to determine admissible forms of the boundary conditions, the integrand
\eqref{eq:integrand-quadratic-form} is rewritten by diagonalising the symmetric
matrix using $V := (v_-^1, v_-^2, v_-^3, v_+^1, v_+^2, v_+^3)$ as
\begin{equation}
  \vect{B \\ \frac{\nabla \times B}{\rho}}^T
  \begin{pmatrix}
    \bigl( \frac{u}{2} - \frac{\nabla \times B}{\rho} \bigr) \cdot \nu \I_3
    & \frac{1}{2} B \cdot \nu \I_3 \\
    \frac{1}{2} B \cdot \nu \I_3 & 0
  \end{pmatrix}
  \vect{B \\ \frac{\nabla \times B}{\rho}}
  =
  \vect{B \\ \frac{\nabla \times B}{\rho}}^T
  V
  \begin{pmatrix}
    \frac{\lambda_-}{\abs{v_-}^2} \I_3 & 0 \\
    0 & \frac{\lambda_+}{\abs{v_+}^2} \I_3
  \end{pmatrix}
  V^T
  \vect{B \\ \frac{\nabla \times B}{\rho}},
\end{equation}
where
\begin{equation}
  \abs{v_\pm}^2
  =
  \abs{v_\pm^i}^2
  =
  \lambda_\pm^2 + \frac{(B \cdot \nu)^2}{4}.
\end{equation}
Now, possible forms of boundary conditions can be determined using the characteristic
variables
\begin{equation}
\label{eq:characteristic-variable}
  V^T \vect{B \\ \frac{\nabla \times B}{\rho}}
  =
  \vect{
    \lambda_- B + \frac{B \cdot \nu}{2} \frac{\nabla \times B}{\rho}
    \\
    \lambda_+ B + \frac{B \cdot \nu}{2} \frac{\nabla \times B}{\rho}
  }.
\end{equation}
The general form of boundary conditions used also in \cite{nordstrom2017roadmap,
nordstrom2018energy} is $W_- = R W_+ + g$, where $W_-$ are the incoming variables
(corresponding to negative eigenvalues), $W_+$ the outgoing ones (corresponding
to positive eigenvalues), and $g$ are boundary data. Thus, as for linear hyperbolic
equations, the incoming variables are specified via the outgoing variables (and
an operator $R$) and boundary data $g$. Depending on the solution, there might be
no incoming or outgoing variables since the eigenvalues $\lambda_\pm$ can be zero.
However, if $B \cdot \nu \neq 0$, this general form of boundary conditions is
\begin{equation}
\label{eq:form-of-BCs}
  \underbrace{\lambda_- B + \frac{B \cdot \nu}{2} \frac{\nabla \times B}{\rho}}
    _{W_-,\, \text{incoming}}
  =
  R \underbrace{\left( \lambda_+ B + \frac{B \cdot \nu}{2} \frac{\nabla \times B}{\rho} \right)}
    _{W_+,\, \text{outgoing}}
  + g.
\end{equation}

The following general result has been obtained in \cite[section~2.3]{nordstrom2018energy}.
\begin{proposition}
\label{prop:Nordstrom-LaCognata}
  Suppose that the energy method can be applied to a given initial boundary value
  problem and yields volume terms that can be estimated and the surface term
  \begin{equation}
  \label{eq:Nordstrom-LaCognata-surface-term}
    - \int_{\partial\Omega}
      \vect{W_+ \\ W_-}^T
      \begin{pmatrix}
        \Lambda_+ & 0 \\
        0 & \Lambda_-
      \end{pmatrix}
      \vect{W_+ \\ W_-},
  \end{equation}
  where $\Lambda_\pm$ is a diagonal matrix with only positive/negative eigenvalues
  and $W_\pm$ are the outgoing/incoming variables. The boundary condition
  \begin{equation}
  \label{eq:Nordstrom-LaCognata-BC}
    W_- = R W_+ + g
  \end{equation}
  bounds the surface term \eqref{eq:Nordstrom-LaCognata-surface-term}, if
  \begin{enumerate}[label=\alph*)]
    \item
    the boundary condition \eqref{eq:Nordstrom-LaCognata-BC} is implemented strongly,
    \begin{equation}
    \label{eq:Nordstrom-LaCognata-nonhomogeneous-1}
      \Lambda_+ + R^T \Lambda_- R > 0,
    \end{equation}
    and there is a positive semi-definite matrix $\Gamma$ such that
    \begin{equation}
    \label{eq:Nordstrom-LaCognata-nonhomogeneous-2}
      - \Lambda_- + (\Lambda_- R) (\Lambda_+ + R^T \Lambda_- R)^{-1} (\Lambda_- R)^T
      \leq
      \Gamma
      <
      \infty.
    \end{equation}

    \item
    the boundary condition \eqref{eq:Nordstrom-LaCognata-BC} is implemented strongly,
    \begin{equation}
    \label{eq:Nordstrom-LaCognata-homogeneous}
      \Lambda_+ + R^T \Lambda_- R \geq 0,
    \end{equation}
    and homogeneous boundary data $g = 0$ are used.
  \end{enumerate}
  The same is true for a weak implementation of the boundary conditions using
  a penalty term described in \cite[section~2.3.2]{nordstrom2018energy}.
\end{proposition}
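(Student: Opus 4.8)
The plan is to reduce both assertions to a single pointwise algebraic estimate of the integrand of \eqref{eq:Nordstrom-LaCognata-surface-term} on $\partial\Omega$, since the volume terms are estimable by hypothesis and only the surface term remains to be controlled. First I would insert the boundary condition \eqref{eq:Nordstrom-LaCognata-BC}, that is $W_- = R W_+ + g$, into the integrand $W_+^T \Lambda_+ W_+ + W_-^T \Lambda_- W_-$ and expand. Using that $\Lambda_-$ is symmetric, this becomes
\[
  W_+^T P\, W_+ + 2 g^T \Lambda_- R\, W_+ + g^T \Lambda_- g,
  \qquad
  P := \Lambda_+ + R^T \Lambda_- R,
\]
so the entire analysis is governed by the symmetric matrix $P$ appearing in \eqref{eq:Nordstrom-LaCognata-nonhomogeneous-1} and \eqref{eq:Nordstrom-LaCognata-homogeneous}.

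For part~b) I would set $g = 0$, leaving the integrand $W_+^T P\, W_+$, which is non-negative by \eqref{eq:Nordstrom-LaCognata-homogeneous}. Hence the surface term satisfies $-\int_{\partial\Omega} W_+^T P\, W_+ \leq 0$, and together with the estimable volume terms the energy rate is bounded. This is the straightforward case: mere semi-definiteness of $P$ suffices because, with $g=0$, there is no cross term that would need to be absorbed.

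For part~a) the cross term $2 g^T \Lambda_- R\, W_+$ must be absorbed by completing the square in the free outgoing variable $W_+$. The strict inequality \eqref{eq:Nordstrom-LaCognata-nonhomogeneous-1}, $P > 0$, is used precisely here to guarantee that $P$ is invertible, giving
\[
  W_+^T P\, W_+ + 2 g^T \Lambda_- R\, W_+
  =
  \bigl( W_+ + P^{-1} (\Lambda_- R)^T g \bigr)^T P \bigl( W_+ + P^{-1} (\Lambda_- R)^T g \bigr)
  - g^T (\Lambda_- R) P^{-1} (\Lambda_- R)^T g.
\]
Dropping the non-negative perfect square shows the integrand is bounded below by $- g^T \bigl( (\Lambda_- R) P^{-1} (\Lambda_- R)^T - \Lambda_- \bigr) g$, so that $-\int_{\partial\Omega}(\cdots) \leq \int_{\partial\Omega} g^T \bigl( (\Lambda_- R) P^{-1} (\Lambda_- R)^T - \Lambda_- \bigr) g$. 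By \eqref{eq:Nordstrom-LaCognata-nonhomogeneous-2} this matrix is dominated by the finite positive semi-definite $\Gamma$, hence the surface term is bounded by $\int_{\partial\Omega} g^T \Gamma g$, a quantity depending on the boundary data alone, and the energy estimate follows.

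I expect the only genuine subtlety to be recognising the Schur-complement expression $(\Lambda_- R) P^{-1} (\Lambda_- R)^T - \Lambda_-$ as precisely what remains after optimising over the free variable $W_+$, which explains why \eqref{eq:Nordstrom-LaCognata-nonhomogeneous-2} is the natural closure condition and why strict definiteness of $P$ (rather than the semi-definiteness that suffices in part~b) is forced once inhomogeneous data are present. For the weak implementation I would repeat the computation after adding the SAT/penalty term, checking that the penalty is chosen so that the surface quadratic form retains the same matrix $P$ and the same completed square; the conditions \eqref{eq:Nordstrom-LaCognata-nonhomogeneous-1}--\eqref{eq:Nordstrom-LaCognata-nonhomogeneous-2} then transfer verbatim, as constructed in \cite[section~2.3.2]{nordstrom2018energy}.
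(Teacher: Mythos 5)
Your argument is correct. The paper gives no proof of this proposition at all --- it is quoted verbatim from Nordström and La Cognata \cite[section~2.3]{nordstrom2018energy} --- and your route (substitute $W_- = RW_+ + g$ into the surface integrand, complete the square in $W_+$ using the invertibility of $P = \Lambda_+ + R^T\Lambda_- R$ guaranteed by \eqref{eq:Nordstrom-LaCognata-nonhomogeneous-1}, drop the non-negative square, and bound the remaining Schur-complement term $g^T\bigl((\Lambda_- R)P^{-1}(\Lambda_- R)^T - \Lambda_-\bigr)g$ by $g^T\Gamma g$ via \eqref{eq:Nordstrom-LaCognata-nonhomogeneous-2}, with part~b) following immediately since $g=0$ kills the cross term and only $P\geq 0$ is needed) is precisely the argument of that reference.
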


\subsection{Outflow Boundary Conditions}

Stable (neutral) outflow boundary conditions or ``do nothing'' boundary conditions
will be important for the envisioned use cases.
Inspired by results of \cite{dong2014robust} for the incompressible Navier-Stokes
equations, the following outflow boundary conditions for the magnetic induction
equation \eqref{eq:induction-transport-split-Hall} are proposed:
\begin{equation}
\label{eq:outflow-BCs-strong}
  \1{\bigl( \frac{u}{2} - \frac{\nabla \times B}{\rho} \bigr) \cdot \nu < 0}
  \biggl(\biggl( \frac{u}{2} - \frac{\nabla \times B}{\rho} \biggr) \cdot \nu\biggr) B
  + (B \cdot \nu) \frac{\nabla \times B}{\rho}
  =
  0.
\end{equation}
For the corresponding weak implementation, the following term is added to the PDE
\begin{equation}
\label{eq:outflow-BCs-weak}
  + L \left(
  \1{\bigl( \frac{u}{2} - \frac{\nabla \times B}{\rho} \bigr) \cdot \nu < 0}
  \biggl(\biggl( \frac{u}{2} - \frac{\nabla \times B}{\rho} \biggr) \cdot \nu\biggr) B
  + (B \cdot \nu) \frac{\nabla \times B}{\rho}
  \right).
\end{equation}
Thus, applying the energy method to \eqref{eq:induction-transport-split-Hall}
yields the volume terms of \eqref{eq:energy-rate-transport} and the surface term
\begin{equation}
\begin{aligned}
  &\phantom{=}
  - \int_{\partial\Omega} \left(
    \biggl(\biggl( \frac{u}{2} - \frac{\nabla \times B}{\rho} \biggr) \cdot \nu\biggr)
      \abs{B}^2
    + (B \cdot \nu) \frac{\nabla \times B}{\rho} \cdot B
  \right)
  \\&\phantom{=}
  + \int_{\partial\Omega} \left(
    \1{\bigl( \frac{u}{2} - \frac{\nabla \times B}{\rho} \bigr) \cdot \nu < 0}
    \biggl(\biggl( \frac{u}{2} - \frac{\nabla \times B}{\rho} \biggr) \cdot \nu\biggr) B
    + (B \cdot \nu) \frac{\nabla \times B}{\rho}
  \right) \cdot B
  \\
  &=
  - \int_{\partial\Omega}
    \biggl(\biggl( \frac{u}{2} - \frac{\nabla \times B}{\rho} \biggr) \cdot \nu\biggr)
    \abs{B}^2
    \biggl( 1 - \1{\bigl( \frac{u}{2} - \frac{\nabla \times B}{\rho} \bigr) \cdot \nu < 0} \biggr)
  \leq
  0.
\end{aligned}
\end{equation}
Hence, an energy estimate can be obtained. The different cases listed above will
be considered separately in the following with respect to the form and number of
boundary conditions.

\begin{remark}
\label{rem:hall_outflow_u_vs_u2}
  The appearance of $u/2 - (\nabla \times B)/\rho$ instead of $u - (\nabla \times B)/\rho$
  in the boundary condition \eqref{eq:outflow-BCs-strong} might seem to be
  irritating based on physical intuition at first, since the associated transport
  velocity for the magnetic field uses $u$ instead of $u/2$. However, these terms
  arise at the boundary using the energy method. It is not clear whether an energy
  estimate can be obtained using $u$ instead of $u/2$. Moreover, associated numerical
  methods behave differently, cf. section~\ref{sec:hall_outflow}.
\end{remark}

\subsubsection{Case~1: \texorpdfstring{$B \cdot \nu \neq 0$}{B⋅ν≠0} and \texorpdfstring{$\bigl( \frac{u}{2} - \frac{\nabla \times B}{\rho} \bigr) \cdot \nu < 0$}{(u/2 - ∇×B/ϱ)⋅ν<0}}

In this case, there are three incoming and three outgoing variables and the boundary
condition \eqref{eq:outflow-BCs-strong} can be written as
\begin{equation}
  \biggl(\biggl( \frac{u}{2} - \frac{\nabla \times B}{\rho} \biggr) \cdot \nu\biggr) B
  + (B \cdot \nu) \frac{\nabla \times B}{\rho}
  =
  W_- + W_+
  =
  0.
\end{equation}
Thus, the expected number of boundary conditions is imposed and given in the form
\eqref{eq:form-of-BCs} with $R = - \I_3$ and $g = 0$. The surface term resulting
from the energy method, i.e. from computing $\int_\Omega B \cdot \partial_t B$,
becomes
\begin{equation}
  - \int_{\partial\Omega} \left(
    \biggl(\biggl( \frac{u}{2} - \frac{\nabla \times B}{\rho} \biggr) \cdot \nu\biggr) \abs{B}^2
    + (B \cdot \nu) \frac{\nabla \times B}{\rho} \cdot B
  \right)
  =
  0
\end{equation}
for the strong implementation \eqref{eq:outflow-BCs-strong}. Analogously, the
resulting surface term using the weak implementation \eqref{eq:outflow-BCs-weak}
is also zero.

\subsubsection{Case~2: \texorpdfstring{$B \cdot \nu \neq 0$}{B⋅ν≠0} and \texorpdfstring{$\bigl( \frac{u}{2} - \frac{\nabla \times B}{\rho} \bigr) \cdot \nu \geq 0$}{(u/2 - ∇×B/ϱ)⋅ν≥0}}

Again, there are three incoming and outgoing variables. The boundary condition
\eqref{eq:outflow-BCs-strong} fulfils
\begin{equation}
\begin{aligned}
  (B \cdot \nu) \frac{\nabla \times B}{\rho} = 0
  &\iff
  \biggl( \lambda_- B + \frac{B \cdot \nu}{2} \frac{\nabla \times B}{\rho} \biggr)
  =
  \frac{\lambda_-}{\lambda_+}
  \biggl( \lambda_+ B + \frac{B \cdot \nu}{2} \frac{\nabla \times B}{\rho} \biggr)
  \\
  &\iff
  W_- = \frac{\lambda_-}{\lambda_+} W_+.
\end{aligned}
\end{equation}
The expected number of boundary conditions is imposed in the form \eqref{eq:form-of-BCs}
with $R = \frac{\lambda_-}{\lambda_+}$ and $g = 0$. The surface term resulting
from the energy method is
\begin{equation}
  - \int_{\partial\Omega} \left(
    \biggl(\biggl( \frac{u}{2} - \frac{\nabla \times B}{\rho} \biggr) \cdot \nu\biggr) \abs{B}^2
    + (B \cdot \nu) \frac{\nabla \times B}{\rho} \cdot B
  \right)
  =
  - \int_{\partial\Omega} \left(
    \biggl(\biggl( \frac{u}{2} - \frac{\nabla \times B}{\rho} \biggr) \cdot \nu\biggr) \abs{B}^2
  \right)
  \leq
  0
\end{equation}
for the strong implementation \eqref{eq:outflow-BCs-strong} and similarly for the
weak implementation \eqref{eq:outflow-BCs-weak}.

\subsubsection{Case~3: \texorpdfstring{$B \cdot \nu = 0$}{B⋅ν=0} and \texorpdfstring{$\bigl( \frac{u}{2} - \frac{\nabla \times B}{\rho} \bigr) \cdot \nu < 0$}{(u/2 - ∇×B/ϱ)⋅ν<0}}

In this case, the boundary condition \eqref{eq:outflow-BCs-strong} becomes
\begin{equation}
  \biggl(\biggl( \frac{u}{2} - \frac{\nabla \times B}{\rho} \biggr) \cdot \nu\biggr) B
  =
  W_-
  =
  0.
\end{equation}
Since $\lambda_+ = 0$, this is of the expected form for homogeneous boundary data
$g = 0$ and no outgoing variables $W_+$. As in Case~1, the surface term arising
from the energy method is zero for both implementations.

\subsubsection{Case~4: \texorpdfstring{$B \cdot \nu = 0$}{B⋅ν=0} and \texorpdfstring{$\bigl( \frac{u}{2} - \frac{\nabla \times B}{\rho} \bigr) \cdot \nu \geq 0$}{(u/2 - ∇×B/ϱ)⋅ν≥0}}

Now, the boundary condition \eqref{eq:outflow-BCs-strong} is simply $0 = 0$,
which is the only expected form, since there are no incoming variables ($\lambda_-=0$,
$\lambda_+ > 0$). Clearly, the surface term arising from the energy method is
non-positive.

Together with the results of Lemma~\ref{lem:induction-transport-strong} and
Lemma~\ref{lem:induction-transport-weak}, this is is summed up in
\begin{theorem}
\label{thm:outflow-BCs-strong}
  A sufficiently smooth solution $B$ of the magnetic induction equation with Hall
  effect \eqref{eq:induction-transport-split-Hall} with strong form boundary condition
  \eqref{eq:outflow-BCs-strong} or with weak implementation of the boundary condition
  due to the addition of \eqref{eq:outflow-BCs-weak} on the right hand side satisfies
  the energy rate estimate
  \begin{equation}
    \od{}{t} \norm{B}_{L^2(\Omega)}^2
    =
    2 \int_\Omega B \cdot \partial_t B
    \leq
    9 \norm{\nabla u(t)}_{L^\infty(\Omega)} \norm{B(t)}_{L^2(\Omega)}^2.
  \end{equation}
\end{theorem}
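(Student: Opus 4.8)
The plan is to apply the energy method directly to \eqref{eq:induction-transport-split-Hall}, computing $\od{}{t}\norm{B}_{L^2(\Omega)}^2 = 2\int_\Omega B \cdot \partial_t B$ by splitting the right-hand side into its transport (linear) and Hall (nonlinear) contributions and handling each separately.

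Since \eqref{eq:induction-transport-split-Hall} contains exactly the split form of the linear terms appearing in \eqref{eq:induction-transport-split}, Lemma~\ref{lem:energy-rate-transport} applies verbatim to the transport part and yields the volume contribution $\int_\Omega \bigl( B_i B_j \partial_j u_i - \frac{1}{2} B_i B_i \partial_j u_j \bigr)$ together with the surface term $-\int_{\partial\Omega} \frac{1}{2} \abs{B}^2\, u \cdot \nu$. For the Hall term I would reuse the computation \eqref{eq:int-B-dot-curl-A}: integration by parts shifts the curl onto $B$, and because $A = \frac{1}{\rho}(\nabla \times B) \times B$ is orthogonal to $\nabla \times B$, the volume integral $-\int_\Omega A \cdot (\nabla \times B)$ vanishes. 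This is the crucial simplification---the nonlinear term is energy-conservative in the interior and adds no new volume contribution, so the volume estimate reduces to exactly the linear case, leaving only an additional surface term.

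Combining the two surface contributions gives precisely the quadratic form \eqref{eq:integrand-quadratic-form}. Here I would invoke the boundary-condition analysis: under either the strong form \eqref{eq:outflow-BCs-strong} or the weak penalty \eqref{eq:outflow-BCs-weak}, the collected surface term reduces (as computed just before the statement and confirmed case by case in Cases~1--4, consistent with Proposition~\ref{prop:Nordstrom-LaCognata}) to
\begin{equation}
  - \int_{\partial\Omega}
    \biggl(\biggl( \frac{u}{2} - \frac{\nabla \times B}{\rho} \biggr) \cdot \nu\biggr)
    \abs{B}^2
    \biggl( 1 - \1{\bigl( \frac{u}{2} - \frac{\nabla \times B}{\rho} \bigr) \cdot \nu < 0} \biggr)
  \leq 0,
\end{equation}
so every surface term can be dropped from the energy balance.

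It then remains to bound the volume term exactly as in the proof of Lemma~\ref{lem:induction-transport-strong}: the estimate $\sum_{i,j}\int_\Omega \abs{B_i}\abs{B_j} \leq 3\norm{B}_{L^2(\Omega)}^2$ gives $\int_\Omega \bigl( B_i B_j \partial_j u_i - \frac{1}{2} B_i B_i \partial_j u_j \bigr) \leq \frac{9}{2}\norm{\nabla u}_{L^\infty(\Omega)}\norm{B}_{L^2(\Omega)}^2$, and the factor of two produces the claimed bound. The proof is thus largely an assembly of earlier results; the only genuinely new ingredient is the surface term, and the main obstacle is confirming its non-positivity across all sign configurations of $B \cdot \nu$ and $\bigl(\frac{u}{2} - \frac{\nabla \times B}{\rho}\bigr)\cdot\nu$---a verification already supplied by the preceding case analysis, after which the remaining steps are the standard Young-type estimates.
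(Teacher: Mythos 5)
Your proposal is correct and follows essentially the same route as the paper: the transport part is handled via Lemma~\ref{lem:energy-rate-transport} and the volume estimate of Lemma~\ref{lem:induction-transport-strong}, the Hall term is shown to be energy-conservative in the volume via \eqref{eq:int-B-dot-curl-A}, and the combined surface term is shown to be non-positive under the boundary condition \eqref{eq:outflow-BCs-strong} or \eqref{eq:outflow-BCs-weak}, exactly as in the computation and case analysis preceding the theorem. No gaps.
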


\begin{remark}
  The energy (rate) estimate can also be investigated using properties of the
  matrix $\Lambda_+ + R^T \Lambda_- R$, which should be positive semidefinite
  as in \cite{nordstrom2018energy}, cf. Proposition~\ref{prop:Nordstrom-LaCognata}.
  Here, $\Lambda_\pm = \frac{\lambda_\pm}{\abs{v_\pm}^2} \I_3$. The basic result
  (an estimate can be obtained) is the same.
\end{remark}

\begin{remark}
  One might want to specify Dirichlet boundary data of the form $B = B^b$ at an inflow
  boundary. This can be written in the form \eqref{eq:Nordstrom-LaCognata-BC} with
  $R = \I_3$ and appropriate $g$. However, it does not seem to be possible to obtain
  an energy estimate in this way, similar to the case of Dirichlet boundary conditions
  for the incompressible Navier-Stokes equations investigated in \cite{nordstrom2018energy},
  since condition \eqref{eq:Nordstrom-LaCognata-nonhomogeneous-1} is not satisfied
  (and \eqref{eq:Nordstrom-LaCognata-nonhomogeneous-2} makes no sense).
\end{remark}

\begin{remark}
  If there are negative eigenvalues, it might seem to be natural to specify boundary
  data of the form $W_- = g$, i.e. \eqref{eq:Nordstrom-LaCognata-BC} with $R = 0$.
  Then, condition \eqref{eq:Nordstrom-LaCognata-nonhomogeneous-1} can be weakened
  to \eqref{eq:Nordstrom-LaCognata-homogeneous} (which is satisfied trivially for
  $R=0$) and condition \eqref{eq:Nordstrom-LaCognata-nonhomogeneous-2} becomes
  $- \Lambda_- \leq \Gamma < \infty$. Since
  \begin{equation}
  \begin{aligned}
    - \Lambda_-
    &=
    - \frac{\lambda_-}{\abs{v_-}^2}
    =
    - \frac{\lambda_-}{\lambda_-^2 + \frac{(B \cdot  \nu)^2}{4}}
    \\
    &=
    - \frac{
       \bigl( \frac{u}{2} - \frac{\nabla \times B}{\rho} \bigr) \cdot \nu
      - \sqrt{\bigl( \bigl( \frac{u}{2} - \frac{\nabla \times B}{\rho} \bigr) \cdot \nu \bigr)^2
              + (B \cdot \nu)^2 }
    }{
      \bigl( \bigl( \frac{u}{2} - \frac{\nabla \times B}{\rho} \bigr) \cdot \nu \bigr)^2
      + (B \cdot \nu)^2
      - \bigl( \bigl( \frac{u}{2} - \frac{\nabla \times B}{\rho} \bigr) \cdot \nu \bigr)
        \sqrt{\bigl( \bigl( \frac{u}{2} - \frac{\nabla \times B}{\rho} \bigr) \cdot \nu \bigr)^2
              + (B \cdot \nu)^2 }
    },
  \end{aligned}
  \end{equation}
  $- \Lambda_- \to \infty$, e.g. for $B \cdot \nu = 0$ and
  $(u/2 - (\nabla \times B)/\rho) \cdot \nu < 0$,
  $(u/2 - (\nabla \times B)/\rho) \cdot \nu \nearrow 0$.
  Thus, it is not possible to get an energy estimate in this case using
  Proposition~\ref{prop:Nordstrom-LaCognata}.
\end{remark}

\subsection{Semidiscrete Setting}

The Hall term $- \nabla \times \Bigl( \frac{1}{\rho} (\nabla \times B) \times B \Bigr)$
can be discretised directly using SBP derivative operators. Since the energy estimate
relies solely on integration by parts, a discrete analogue holds if SBP
operators are used. As in section~\ref{sec:transport-term-semidiscrete}, the
properties of the induction equation with Hall effect and weak implementation of
the boundary conditions mentioned before remain invariant under semidiscretisation
if the components $\nu_j$ of the outer unit normal are exchanged with the corresponding
boundary matrices $E_j$. This yields
\begin{theorem}
  The semidiscretistion
  \begin{multline}
    \partial_t B_i
    =
    B_j D_j u_i
    - \frac{1}{2} B_i D_j u_j
    - \frac{1}{2} u_j D_j B_i
    - \frac{1}{2} D_j (u_j B_i)
    - D_j \biggl( \frac{(D \times B)_j}{\rho} B_i - \frac{(D \times B)_i}{\rho} B_j \biggr)
    \\
    + M^{-1} E_j \left(
      \1{\bigl( \frac{u}{2} - \frac{D \times B}{\rho} \bigr) \cdot \nu < 0}
      \biggl( \frac{u_j}{2} - \frac{(D \times B)_j}{\rho} \biggr) B_i
      + B_j \frac{(D \times B)_i}{\rho}
    \right)
  \end{multline}
  of the magnetic induction equation \eqref{eq:induction-transport-split-Hall}
  with outflow boundary condition \eqref{eq:outflow-BCs-strong} using
  $(D \times B)_i := \epsilon_{ijk} D_j B_k$ is energy stable, i.e. a
  sufficiently smooth solution satisfies
  \begin{equation}
    \od{}{t} \norm{B}_{M}^2
    =
    2 B_i^T M \partial_t B_i
    \leq
    9 \norm{D u(t)}_{\ell^\infty} \norm{B(t)}_{M}^2.
  \end{equation}
\end{theorem}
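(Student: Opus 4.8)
The plan is to reproduce, at the discrete level, the continuous energy argument underlying Theorem~\ref{thm:outflow-BCs-strong}, using the SBP property \eqref{eq:SBP} in place of integration by parts and the boundary operators $E_j$ in place of the normal components $\nu_j$. Concretely, I would multiply the semidiscretisation by $2 B_i^T M$, sum over the components $i$, and organise the resulting expression for $\od{}{t} \norm{B}_M^2 = 2 B_i^T M \partial_t B_i$ into three groups: the split-form transport terms, the Hall term, and the SAT. The aim is to bound the transport volume terms by $9 \norm{D u(t)}_{\ell^\infty} \norm{B}_M^2$, to show that the Hall volume term vanishes exactly, and to check that all surface contributions combine into a non-positive boundary term.

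The transport group requires nothing new. The manipulations already carried out for Lemma~\ref{lem:induction-transport-semidiscrete} — using the split form together with \eqref{eq:SBP} and the fact that $M$ is diagonal — produce the volume bound $9 \norm{D u(t)}_{\ell^\infty} \norm{B}_M^2$ and leave the surface term $- B_i^T E_j (u_j B_i)$, the discrete analogue of $- \int_{\partial\Omega} \abs{B}^2 u \cdot \nu$. I would take this part over unchanged, keeping its surface term unevaluated so that it can be merged later with the Hall and SAT boundary contributions.

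For the Hall group I would write the volume term as $\epsilon_{ijk} D_j \widetilde{A}_k$ with the discrete vector potential $\widetilde{A}_k := \frac{1}{\rho} \epsilon_{klm} (D \times B)_l B_m$, the node-wise analogue of $A = \frac{1}{\rho}(\nabla \times B) \times B$. Its energy contribution $2 \epsilon_{ijk} B_i^T M D_j \widetilde{A}_k$ is then treated exactly as in \eqref{eq:int-B-dot-curl-A}: the SBP property $M D_j = E_j - D_j^T M$ moves the derivative onto $B$ and splits it into the surface term $2 \epsilon_{ijk} B_i^T E_j \widetilde{A}_k$ and the volume term $- 2 \epsilon_{ijk} (D_j B_i)^T M \widetilde{A}_k$. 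Because $M$ is diagonal and all products are formed node-wise, the latter equals a node-weighted sum of $\epsilon_{ijk} (D_j B_i) \widetilde{A}_k = - (D \times B) \cdot \widetilde{A}$; since $\widetilde{A} = \frac{1}{\rho}(D \times B) \times B$ is orthogonal to $D \times B$ by the pointwise identity $(D \times B) \cdot \bigl( (D \times B) \times B \bigr) = 0$, this vanishes at every node. Thus the Hall term is discretely conservative with respect to the magnetic energy, exactly mimicking $A \cdot (\nabla \times B) = 0$, and only its surface term survives.

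It then remains to collect the three surface contributions: $- B_i^T E_j (u_j B_i)$ from the transport term, $2 \epsilon_{ijk} B_i^T E_j \widetilde{A}_k$ from the Hall term, and the SAT $2 B_i^T E_j$ applied to the boundary integrand of \eqref{eq:outflow-BCs-weak} with $D \times B$ in place of $\nabla \times B$. Since each $E_j$ mimics $\nu_j$ and every boundary contribution is evaluated node-wise, these assemble at each boundary node into precisely the quadratic form \eqref{eq:integrand-quadratic-form} together with the same indicator as in the weak outflow condition. The eigenvalue analysis of Cases~1--4 depends only on the node values of $B$, $D \times B$, $u$ and $\rho$, so it transfers verbatim and shows the combined surface term to be non-positive at every boundary node. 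Adding the three groups then yields $\od{}{t} \norm{B}_M^2 \leq 9 \norm{D u(t)}_{\ell^\infty} \norm{B}_M^2$. I expect the vanishing of the discrete Hall volume term to be the crux: it is exactly here that the diagonality of $M$ is indispensable, since it is what lets the pointwise orthogonality of the cross product be inherited by the discrete $\norm{\cdot}_M$ inner product; for a non-diagonal norm this cancellation would in general fail. A secondary point of care is confirming that the one-sided stencils used near the boundary do not disturb the node-wise quadratic-form structure of the surface terms, which holds because that structure involves only the boundary node values of the discrete curl, not the manner in which it is computed.
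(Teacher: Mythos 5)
Your proposal is correct and follows essentially the same route as the paper: the paper's justification is precisely that the continuous energy argument (transport estimate as in Lemma~\ref{lem:induction-transport-semidiscrete}, the integration-by-parts identity \eqref{eq:int-B-dot-curl-A} with the pointwise orthogonality $(\nabla\times B)\cdot A=0$, and the case analysis of the boundary quadratic form \eqref{eq:integrand-quadratic-form}) transfers verbatim once $\partial_j$ is replaced by the SBP operators $D_j$ and $\nu_j$ by $E_j$, with the diagonal mass matrix ensuring the nodewise cancellation of the Hall volume term. Your write-up simply makes explicit the steps the paper leaves implicit.
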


\begin{remark}
  As described in section~\ref{sec:energy-stability-of-other-forms}, the transport
  and source term can be discretised using different forms leading to an energy
  estimate. For all forms (with non-zero source term), the same boundary terms
  arise and energy estimates can be obtained.
\end{remark}

\begin{remark}
  Due to the second derivatives appearing in the Hall term, it can be expected that
  there is a time step restriction of the form $\Delta t \propto \Delta x^2$ for
  explicit time integrators. This has been mentioned in the context of the Hall
  MHD equations in \cite{huba2003hall, toth2008hall, toth2012adaptive} with some
  connections to physical waves.
\end{remark}

\begin{remark}
  The semidiscretisation can be implemented straightforwardly (e.g. using extended
  numerical fluxes) as described in section~\ref{sec:implementation} if the discrete
  current $D \times B$ is computed at first.
\end{remark}

\section{Divergence Constraint on the Magnetic Field}
\label{sec:div-constraint}

There are some possibilities to handle the divergence constraint $\div B = 0$
on the magnetic field that have been described in the articles \cite{toth2000divB,
derigs2018ideal}, e.g. the addition of nonconservative source terms
\cite{godunov1972symmetric, powell1994approximate, powell1999solution}, the
projection method \cite{brackbill1980effect}, constrained transport schemes
\cite{toth2000divB}, and generalised Lagrange multipliers or hyperbolic divergence
cleaning \cite{munz2000divergence, dedner2002hyperbolic, derigs2018ideal}.
Here, explicit divergence cleaning via the projection method will be considered
in detail and adapted to the semidiscretisations discussed in the previous
sections. In particular, the focus will be on the magnetic energy and boundary
conditions.

\subsection{Divergence Cleaning via Projection}

For plasma simulations, the projection method to enforce $\div B = 0$ has been
proposed in \cite{brackbill1980effect}. The basic idea can be described as follows.
If $\div B \neq 0$, solve the Poisson equation $-\Delta \phi = \div B$ and set
$\tilde B = B + \grad \phi$. Then, $\div \tilde B = \div B + \Delta \phi = 0$.
Although this idea seems to be pretty simple, the discretisation has to be performed
carefully. The following parts should be investigated:
\begin{itemize}
  \item
  In the derivation above, $\div \grad = \Delta$ has been used. This does not
  hold for all discretisations exactly.

  \item
  Boundary conditions have to be imposed in order to get a well-posed Poisson
  problem.

  \item
  What is the influence of the projection on the total conservation of the magnetic
  field and the magnetic energy?

  \item
  How is the resulting discrete linear equation solved?
\end{itemize}

\subsection{Continuous Setting}
\label{sec:div-constraint-continuous}

The Poisson equation $-\Delta \phi = \div B$ has to be enhanced by boundary conditions
in order to get a well-posed problem. Homogeneous Dirichlet boundary conditions
yield the problem
\begin{equation}
\label{eq:poisson}
\begin{aligned}
  - \Delta \phi &= \div B && \text{in } \Omega,
  \\
  \phi &= 0 && \text{on } \partial\Omega.
\end{aligned}
\end{equation}
Assume that $\phi$ is a sufficiently smooth (say, $C^2$) solution of \eqref{eq:poisson}.
Then, the change of the total mass of the magnetic field due to the projection
$B \mapsto B + \grad \phi$ is
\begin{equation}
\label{eq:change-total-mass}
  \int_\Omega \grad \phi
  =
  \int_{\partial\Omega} \phi \nu
  =
  0,
\end{equation}
since $\phi|_{\partial\Omega} = 0$. The total magnetic energy
$\norm{B + \grad \phi}_{L^2(\Omega)}^2$ after the projection is given by
\begin{equation}
\begin{aligned}
  \norm{B}_{L^2(\Omega)}^2
  &=
  \norm{(B + \grad \phi) - \grad \phi}_{L^2(\Omega)}^2
  \\
  &=
  \norm{B + \grad \phi}_{L^2(\Omega)}^2
  + \norm{\grad \phi}_{L^2(\Omega)}^2
  - 2 \scp{B + \grad\phi}{\grad\phi}_{L^2(\Omega)},
\end{aligned}
\end{equation}
where
\begin{equation}
\label{eq:change-total-energy}
\begin{aligned}
  -\scp{B + \grad\phi}{\grad\phi}_{L^2(\Omega)}
  &=
  -\int_\Omega (B + \grad\phi) \cdot \grad\phi
  \\
  &=
  -\int_{\partial\Omega} \phi \, (B + \grad\phi) \cdot \nu
  + \int_\Omega \phi \div(B + \grad\phi)
  =
  0,
\end{aligned}
\end{equation}
since $\phi|_{\partial\Omega} = 0$ and $\div(B + \grad\phi) = \div B + \Delta \phi = 0$.
Thus, the projection $B \mapsto B + \grad \phi$ reduces the total magnetic energy,
which can be interpreted as a desirable stability condition. This is summed up in
\begin{lemma}
  For sufficiently smooth data, the projection $B \mapsto B + \grad\phi$ where
  $\phi$ solves the Poisson equation \eqref{eq:poisson} with homogeneous Dirichlet
  boundary conditions conserves the total mass $\int_\Omega B$ of the magnetic
  field and is energy stable, i.e. it does not increase the total magnetic energy.
\end{lemma}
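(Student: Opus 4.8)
The plan is to establish the two assertions—conservation of total mass and energy stability—separately, and in each case the driving mechanism is integration by parts combined with the homogeneous Dirichlet condition $\phi|_{\partial\Omega} = 0$. Both computations have in fact already been carried out in \eqref{eq:change-total-mass} and \eqref{eq:change-total-energy} above, so the proof is largely a matter of assembling them.

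First I would treat the total mass. The projection changes $\int_\Omega B$ by $\int_\Omega \grad\phi$, so I would apply the identity $\int_\Omega \grad\phi = \int_{\partial\Omega} \phi\,\nu$ (componentwise integration by parts, equivalently the divergence theorem applied to each Cartesian component). Since $\phi$ vanishes on $\partial\Omega$, the boundary integral is zero, giving $\int_\Omega (B + \grad\phi) = \int_\Omega B$; this is exactly \eqref{eq:change-total-mass}. For energy stability I would write $B = (B + \grad\phi) - \grad\phi$ and expand $\norm{B}_{L^2(\Omega)}^2$ by bilinearity into $\norm{B+\grad\phi}_{L^2(\Omega)}^2 + \norm{\grad\phi}_{L^2(\Omega)}^2 - 2\scp{B+\grad\phi}{\grad\phi}_{L^2(\Omega)}$. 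The crux is to show the cross term vanishes: integrating by parts to move the gradient off $\phi$ onto $B+\grad\phi$, the surface contribution $\int_{\partial\Omega}\phi\,(B+\grad\phi)\cdot\nu$ is zero because $\phi|_{\partial\Omega}=0$, and the remaining volume term $\int_\Omega \phi\,\div(B+\grad\phi)$ is zero because $\div(B+\grad\phi) = \div B + \Delta\phi = 0$ by the Poisson equation \eqref{eq:poisson}. This is \eqref{eq:change-total-energy}. Rearranging then yields $\norm{B+\grad\phi}_{L^2(\Omega)}^2 = \norm{B}_{L^2(\Omega)}^2 - \norm{\grad\phi}_{L^2(\Omega)}^2 \leq \norm{B}_{L^2(\Omega)}^2$, so the projection does not increase the magnetic energy.

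Under the stated $C^2$ smoothness the manipulations are entirely routine: this regularity guarantees both that $\div\grad\phi = \Delta\phi$ holds pointwise and that the integrations by parts are legitimate. The only delicate point—and the reason homogeneous Dirichlet data are imposed rather than, say, Neumann data—is that precisely these boundary conditions annihilate the surface terms in both steps. With a different boundary treatment the cross term need not vanish and energy stability could be lost. Hence the main \emph{obstacle} is really the observation that the boundary condition alone drives the result; once this is recognised, the proof is just the combination of \eqref{eq:change-total-mass} and \eqref{eq:change-total-energy}.
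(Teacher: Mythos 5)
Your proposal is correct and follows exactly the paper's own argument: the mass conservation is \eqref{eq:change-total-mass} via the divergence theorem and $\phi|_{\partial\Omega}=0$, and the energy statement comes from the orthogonality of the cross term as computed in \eqref{eq:change-total-energy}. Nothing is missing and the approach is identical to the one in the text preceding the lemma.
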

\begin{remark}
  Despite these ``nice'' properties, the boundary values of the magnetic field
  will be changed in general. This behaviour of the projection is similar to the
  one of modal filters in spectral (element) methods, cf. \cite{vandeven1991family,
  boyd1998two, hesthaven2008filtering}.
  If the boundary values of the magnetic field shall be preserved by the projection,
  the Poisson equation has to be enhanced by homogeneous Neumann boundary conditions.
  In this case, the two assertions given above will be false in general.
\end{remark}

Moreover, for homogeneous Dirichlet boundary conditions, the projection via \eqref{eq:poisson}
can be interpreted as least norm solution of the underdetermined linear system
$\div \beta = \div B$ that shall be solved to get the update $B \mapsto B - \beta$.
Indeed, formally and without further specification of the domains of the linear
operators, $\div^* = - \grad$ and $\div \div^* = - \Delta$ for homogeneous Dirichlet
boundary conditions due to integration by parts.
The least norm solution of $\div \beta = \div B$ is
\begin{equation}
  \beta = \div^* (\div \div^*)^{-1} \div B
  =
  - \grad (-\Delta)^{-1} \div B,
\end{equation}
where $(-\Delta)^{-1}$ is the solution operator of the Poisson equation \eqref{eq:poisson}.
Thus, $\beta = - \grad \phi$. This is the minimum norm solution of $\div\beta = \div B$.
Indeed, for every other solution $b$ with $\div b = \div B$
\begin{equation}
\label{eq:least-norm-solution-prop-1}
  \norm{b}^2
  =
  \norm{\beta}^2 + \norm{b - \beta}^2
  \geq
  \norm{\beta}^2,
\end{equation}
since
\begin{equation}
\label{eq:least-norm-solution-prop-2}
  \scp{b - \beta}{\beta}
  =
  \scp{b - \beta}{\div^* (\div \div^*)^{-1} \div B}
  =
  \scp{\div(b-\beta)}{(\div \div^*)^{-1} \div B}
  =
  0,
\end{equation}
due to $\div b = \div B = \div \beta$. Hence, the projection $B \mapsto B + \grad \phi$
with $\phi$ given by \eqref{eq:poisson} provides the least possible change of the
magnetic field that is necessary to obtain zero divergence, cf.
\cite[section 5.2]{toth2000divB}. This property will be no longer true if other
boundary conditions are used for the Poisson equation.

\begin{remark}
  This problem can be seen as an ill-posed inverse problem. In this case, it
  can be useful to apply an iterative method for the discrete system and solve it
  not to machine accuracy but to some prescribed tolerance allowing non-vanishing
  divergence of the magnetic field but possibly resulting in better numerical
  solutions, cf. \cite[section~2.4]{kaipio2005statistical} and
  \cite[section 5.4]{toth2000divB}.
\end{remark}

\begin{remark}
  Although the projection $B \mapsto \tilde B := B + \grad \phi$ does not increase
  the total magnetic energy, i.e. $\int_\Omega \abs{\tilde B}^2 \leq \int_\Omega \abs{B}^2$,
  a pointwise estimate of the form $\abs{\tilde B}^2 \leq \abs{B}^2$ can in general
  not be guaranteed. Indeed, consider the magnetic field
  \begin{equation}
    B(x,y,z)
    =
    \vect{
      x + 2x (1-y^2) (1-z^2) \\
      -y + 2y (1-x^2) (1-z^2) \\
      2z (1-x^2) (1-y^2)
    }
  \end{equation}
  with corresponding correction potential
  \begin{equation}
    \phi(x,y,z)
    =
    (1-x^2) (1-y^2) (1-z^2),
    \quad
    \grad \phi(x,y,z)
    =
    \vect{
      - 2x (1-y^2) (1-z^2) \\
      - 2y (1-x^2) (1-z^2) \\
      - 2z (1-x^2) (1-y^2)
    },
  \end{equation}
  and the divergence free projection $\tilde B(x,y,z) = (x, -y, 0)^T$ on the cube
  $\Omega = [-1,1]^3$. Then,
  \begin{equation}
    \abs{\tilde B(x,-1,0)}^2
    =
    x^2 + 1
    >
    x^2 + (2 x^2 - 1)^2
    =
    \abs{B(x,-1,0)}^2
  \end{equation}
  for $x \in (-1,1) \setminus \set{0}$.
  Considering the MHD equations, the (mathematical, convex)
  entropy (in non-dimensional units) is $U = - \rho s$, where $s$ is the (physical)
  specific entropy, given as
  \begin{equation}
    s = \log(p) - \gamma \log(\rho),
    \quad
    p = (\gamma-1) \left( \rho e - \frac{1}{2} \rho \abs{v}^2 - \frac{1}{2} \abs{B}^2 \right),
  \end{equation}
  where $p$ is the pressure, $\rho e$ the total energy, $\rho$ the density, $v$
  the velocity, and $B$ the magnetic field, cf. \cite{derigs2018ideal}. Thus,
  by choosing an appropriate distribution of the density $\rho$, the total
  (mathematical) entropy can increase during the projection of the magnetic field.
  Such an effect has been mentioned in \cite{derigs2018ideal} without description
  of an example.
\end{remark}

\subsection{Discrete Setting}
\label{sec:div-constraint-discrete}

There seem to be at least three general possibilities regarding the discretisation
of the projection $B \mapsto B + \grad \phi$ coupled with the Poisson equation
\eqref{eq:poisson}.
\begin{enumerate}
  \item
  Choose a discretisation of $\div$ and get corresponding discretisations of
  $\grad$ and $-\Delta$ with homogeneous Dirichlet boundary conditions.

  \item
  Choose a discretisation of $-\Delta$ with homogeneous Dirichlet boundary conditions
  and get corresponding discretisations of $\div$ and $\grad$.

  \item
  Choose $\div$ and $-\Delta$ with homogeneous Dirichlet boundary conditions
  independently and ignore the supposed coupling of these discretisations since
  they should be consistent.
\end{enumerate}
In general, it will not be possible to obtain $-\Delta \phi = \div B$ at every
node and $\phi = 0$ at $\partial\Omega$, since the boundary nodes are included
in the discretisation. Thus, the discrete projection will in general not enforce
$\div B = 0$ at boundary nodes. One might argue that this is no severe drawback,
since the divergence at boundary nodes is also influenced by the values at the
other side of the boundary.

Another possibility is to ignore the interpretation of the projection onto
divergence free vector fields as solving a Poisson problem and compute the least
norm solution discretely, if possible.

\subsubsection{Possibility 1: Choose \texorpdfstring{$\div$}{div} with Homogeneous
               Dirichlet Boundary Conditions}
\label{sec:choose-div}

One possibility for the discretisation of the divergence that might be considered
natural or obvious is to use the SBP derivative operators $D_i$. In this
case, the discrete divergence of the magnetic field is $D_i B_i$.
Then, a discrete solution of the Poisson equation \eqref{eq:poisson} can be
obtained by setting the boundary nodes of $\phi$ to zero and solving the
discrete Poisson equation $-D_j D_j \phi = D_i B_i$
at the interior nodes. Thereafter, the magnetic field is updated via
$B_i \mapsto B_i + D_i \phi$.

Then, the divergence of the projected magnetic field is zero at the interior nodes.
Moreover, the total mass of the magnetic field is unchanged if SBP operators
are used, since an analogue of \eqref{eq:change-total-mass} holds discretely.
Moreover, the magnetic energy can only decrease, since an analogue of
\eqref{eq:change-total-energy} holds discretely; the last integral is zero since
$D_i \left( B_i + D_i \phi \right)$ is zero at interior
nodes and $\phi$ is zero at the boundary nodes.

\begin{lemma}
  If the Poisson equation with homogeneous Dirichlet boundary conditions
  \eqref{eq:poisson} is discretised via applying the first derivative SBP operator
  twice, the total magnetic field remains constant and the magnetic energy can
  only decrease due to the projection.
\end{lemma}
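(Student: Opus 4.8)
The plan is to mimic the two continuous computations \eqref{eq:change-total-mass} and \eqref{eq:change-total-energy} line by line, replacing integration by parts with the SBP property $M D_i + D_i^T M = E_i$ and the divergence theorem with the boundary operators $E_i$. Throughout I would write the projected field as $\tilde B_i = B_i + D_i \phi$, where $\phi$ vanishes at all boundary nodes and solves $-D_j D_j \phi = D_i B_i$ at the interior nodes. The latter is exactly the statement that the discrete divergence $D_i \tilde B_i = D_i B_i + D_j D_j \phi$ is zero at every interior node, which is the discrete analogue of $\div \tilde B = 0$ and will be used repeatedly.

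For conservation of total mass I would note that the discrete integral of the $i$-th component is $\mathbbm{1}^T M B_i$, where $\mathbbm{1}$ is the grid vector of all ones, so its change under the projection is $\mathbbm{1}^T M D_i \phi$. Inserting the SBP property gives $\mathbbm{1}^T M D_i \phi = \mathbbm{1}^T E_i \phi - (D_i \mathbbm{1})^T M \phi$. The second term vanishes because $D_i$ is a consistent derivative and hence annihilates constants, $D_i \mathbbm{1} = 0$; the first term vanishes because $E_i$ is supported only at boundary nodes, where $\phi = 0$. This is the discrete counterpart of \eqref{eq:change-total-mass}.

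For energy stability I would write $B = \tilde B - \grad_D \phi$ with $(\grad_D \phi)_i = D_i \phi$ and expand $\norm{B}_M^2 = \norm{\tilde B}_M^2 + \norm{\grad_D \phi}_M^2 - 2 \scp{\tilde B}{\grad_D \phi}_M$, so that the claim $\norm{\tilde B}_M^2 \leq \norm{B}_M^2$ reduces to showing that the cross term $\scp{\tilde B}{\grad_D \phi}_M = \sum_i \tilde B_i^T M D_i \phi$ vanishes, exactly as in \eqref{eq:change-total-energy}. Applying the SBP property componentwise turns this into $\sum_i \tilde B_i^T E_i \phi - (D_i \tilde B_i)^T M \phi$, and the boundary part $\sum_i \tilde B_i^T E_i \phi$ is zero because $\phi = 0$ at the boundary nodes.

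The step I expect to require the most care is the remaining volume term $(D_i \tilde B_i)^T M \phi$, the discrete analogue of $\int_\Omega \phi \div \tilde B$. Unlike the continuous case, $D_i \tilde B_i$ is guaranteed to vanish only at the interior nodes, while $\phi$ vanishes only at the boundary nodes. The key observation is that, because $M$ is diagonal, this term is a nodewise sum in which each node $a$ contributes a factor $(D_i \tilde B_i)^{(a)} M_{aa} \phi^{(a)}$: at an interior node the divergence factor is zero, and at a boundary node $\phi^{(a)}$ is zero, so every contribution vanishes. This complementary vanishing — which is where the diagonal-norm assumption is genuinely used — gives $\scp{\tilde B}{\grad_D \phi}_M = 0$ and hence $\norm{\tilde B}_M^2 = \norm{B}_M^2 - \norm{\grad_D \phi}_M^2 \leq \norm{B}_M^2$, completing the proof.
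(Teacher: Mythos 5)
Your proposal is correct and follows exactly the paper's argument: the paper likewise establishes the discrete analogues of \eqref{eq:change-total-mass} and \eqref{eq:change-total-energy} via the SBP property, with the decisive point being precisely the complementary vanishing you identify --- $D_i(B_i + D_i\phi)$ is zero at interior nodes while $\phi$ is zero at boundary nodes, so the remaining volume term drops out nodewise for a diagonal mass matrix. Your write-up merely spells out details the paper leaves as a sketch.
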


\begin{example}
\label{ex:choose-div}
  Using the SBP derivative operators of Example~\ref{ex:SBP-2},
  \begin{equation}
    D
    =
    \frac{1}{2 \Delta x}
    \begin{pmatrix}
      -2 & 2 \\
      -1 & 0 & 1 \\
      & \ddots & \ddots & \ddots \\
      && -1 & 0 & 1 \\
      &&& -2 & 2
    \end{pmatrix},
    \quad
    -D^2
    =
    \frac{1}{4 \Delta 4}
    \begin{pmatrix}
      -2 & 4 & -2 \\
      -2 & 3 & 0 & -1 \\
      -1 & 0 & 2 & 0 & -1 \\
      & \ddots & \ddots & \ddots & \ddots & \ddots \\
      && -1 & 0 & 2 & 0 & -1 \\
      &&& -1 & 0 & 3 & -2 \\
      &&&& -2 & 4 & -2
    \end{pmatrix}.
  \end{equation}
  If the boundary nodes are enforced to be zero, this becomes $(-D^2)_0$.
  The part of $(-D^2)_0$ describing the interior nodes is
  \begin{equation}
    \left[(-D^2)_0\right]_{2:N-1,2:N-1}
    =
    \frac{1}{4 \Delta x^2}
    \begin{pmatrix}
      3 & 0 & -1 \\
      0 & 2 & 0 & -1 \\
      -1 & 0 & 2 & 0 & -1 \\
      & \ddots & \ddots & \ddots & \ddots & \ddots \\
      && -1 & 0 & 2 & 0 & -1 \\
      &&& -1 & 0 & 2 & 0 \\
      &&&& -1 & 0 & 3
    \end{pmatrix},
  \end{equation}
  where Matlab like notation has been used for the indices. This operator is
  symmetric and positive definite.
\end{example}

\subsubsection{Possibility 2: Choose \texorpdfstring{$-\Delta$}{-Δ} with Homogeneous
               Dirichlet Boundary Conditions}

In a periodic domain, the classical second order Laplace operator is given by
\begin{equation}
  D^{(2)}
  =
  \underbrace{
  \frac{1}{\Delta x^2}
  \begin{pmatrix}
    -2 & 1 &&& 1 \\
    1 & -2 & 1 \\
    & \ddots & \ddots & \ddots \\
    && 1 & -2 & 1 \\
    1 &&& 1 & -2
  \end{pmatrix}
  }_{\hat= \Delta}
  =
  \underbrace{
  \frac{1}{\Delta x}
  \begin{pmatrix}
    1 & && -1 \\
    -1 & 1 \\
    & \ddots & \ddots \\
    && -1 & 1
  \end{pmatrix}
  }_{\hat= \grad}
  \underbrace{
  \frac{1}{\Delta x}
  \begin{pmatrix}
    -1 & 1 \\
    & \ddots & \ddots \\
    && -1 & 1 \\
    1 &&& -1
  \end{pmatrix}
  }_{\hat= \div}.
\end{equation}
In this case, a factorisation in adjoint discrete gradient and divergence operators
exist. If this discretisation of the negative Laplace operator shall be used, the
divergence should be computed via forward differences and the gradient via backward
differences (or vice versa). However, such a factorisation does not seem to be immediate
for general discretisations of the Laplace operator with homogeneous boundary
conditions. Thus, this approach will not be pursued in the following.

\subsubsection{Possibility 3: Choose \texorpdfstring{$\div$}{div} and
               \texorpdfstring{$-\Delta$}{-Δ} with Homogeneous Dirichlet Boundary
               Conditions}
\label{sec:choose-div-and-Delta}

Another possibility is to use the standard narrow stencil second derivative operator
with homogeneous Dirichlet boundary conditions to solve the Poisson equation at
the interior nodes and use the standard SBP first derivative operator to compute
the gradient. Again, the total amount of the magnetic field is still unchanged,
as in the previous cases. If no relation between the first and second derivative operators is
known, nothing can be said about the magnetic energy, since the additional term
$-\scp{B + \grad\phi}{\grad\phi}_{L^2(\Omega)}$ \eqref{eq:change-total-energy}
has no definite sign. However, if compatible first and second derivative SBP
operators as proposed in \cite{mattsson2008stable} are used, this term can be
estimated. Indeed, these operators fulfil
\begin{equation}
  M D^{(2)}_i
  =
  - {D^{(1)}_i}^T M D^{(1)}_i + E_i S_i - R_i,
\end{equation}
where $D^{(k)}_i$ is the operator approximating the $k$-th derivative in
coordinate direction $i$, $E_i$ is the $i$-th boundary operator, $S_i$
approximates the derivative in direction $i$ at the boundary, and $R_i$ is
positive semidefinite, cf. \cite[Definition~3.1]{mattsson2008stable}. Thus, the
discrete analogue of \eqref{eq:change-total-energy} is
\begin{multline}
  -\scp{B + \grad\phi}{\grad\phi}_{L^2(\Omega)}
  \approx
  - \phi^T {D^{(1)}_i}^T M \left( B_i + D^{(1)}_i \phi \right)
  =
  - \phi^T {D^{(1)}_i}^T M B_i
  - \phi^T {D^{(1)}_i}^T M D^{(1)}_i \phi
  \\
  \stackrel{\eqref{eq:SBP}}{=}
  \underbrace{
    - \phi^T E_i B_i
  }_{=0}
  \underbrace{
    + \phi^T M D^{(1)}_i B_i
    + \sum_{i=1}^3 \phi^T M D^{(2)}_i \phi
  }_{=0}
  \underbrace{
    - \phi^T E_i S_i \phi
  }_{=0}
  \underbrace{
    + \sum_{i=1}^3 \phi^T R_i \phi
  }_{\geq 0}
  \geq 0.
\end{multline}
The first and fourth term on the right hand side vanish since $\phi$ is zero
at the boundary. The sum of the second and third term vanishes since $\phi$
is zero at the boundary and solves the discrete Poisson equation in the interior.
Finally, the remaining term is non-negative. Thus, the total magnetic energy before
the correction is given by the magnetic energy after the correction plus some
non-negative terms. Therefore, the magnetic energy can again only decrease as
in section~\ref{sec:choose-div}. Nevertheless, the discrete divergence will in
general not be zero after the correction, since $\div \grad = \Delta$ does not
hold discretely.

\begin{lemma}
  If the Poisson equation with homogeneous Dirichlet boundary conditions
  \eqref{eq:poisson} is discretised via a narrow stencil second derivative SBP
  operator that is compatible with the first derivative operator, the total magnetic
  field remains constant and the magnetic energy can only decrease due to the
  projection. However, the discrete divergence will in general not vanish after
  the correction.
\end{lemma}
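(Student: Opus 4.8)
The plan is to mimic the three continuous properties discretely: mass conservation \eqref{eq:change-total-mass}, the energy splitting leading to \eqref{eq:change-total-energy}, and the identity $\div\grad = \Delta$, while tracking carefully where each relation holds only at interior or only at boundary nodes. Write $\tilde B_i = B_i + D^{(1)}_i \phi$ for the projected field, where $\phi = 0$ at the boundary nodes and $-\sum_i D^{(2)}_i \phi = \sum_i D^{(1)}_i B_i$ holds at the interior nodes (the discrete version of \eqref{eq:poisson}), and recall that $M$ is diagonal, so that discrete integration and componentwise multiplication commute. First I would establish mass conservation: the change in the total field in component $i$ is $\mathbbm{1}^T M D^{(1)}_i \phi$, where $\mathbbm{1}$ is the vector of ones. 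Using the SBP property \eqref{eq:SBP} as ${D^{(1)}_i}^T M = E_i - M D^{(1)}_i$ together with the consistency $D^{(1)}_i \mathbbm{1} = 0$, this reduces to $\mathbbm{1}^T E_i \phi$, the discrete analogue of $\int_{\partial\Omega} \phi\,\nu_i$, which vanishes because $\phi = 0$ at the boundary; this reproduces \eqref{eq:change-total-mass}.

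The energy estimate is the main step, essentially the computation displayed just before the lemma, which I would organise as follows. From the discrete energy splitting $\norm{B}_M^2 = \norm{\tilde B}_M^2 + \sum_i \norm{D^{(1)}_i \phi}_M^2 - 2 \sum_i \scp{\tilde B_i}{D^{(1)}_i \phi}_M$ it suffices to show that the cross term $-\sum_i \scp{\tilde B_i}{D^{(1)}_i \phi}_M$, the discrete analogue of \eqref{eq:change-total-energy}, is non-negative. Rewriting each summand as $-\phi^T {D^{(1)}_i}^T M \tilde B_i$ and splitting it into its $B_i$-part and its $D^{(1)}_i \phi$-part, I would apply the SBP property to the former and the Mattsson compatibility relation $M D^{(2)}_i = -{D^{(1)}_i}^T M D^{(1)}_i + E_i S_i - R_i$ of \cite{mattsson2008stable} to the latter. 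The boundary contributions $\phi^T E_i B_i$ and $\phi^T E_i S_i \phi$ then drop out because $\phi = 0$ at the boundary, and the volume term $\phi^T M \bigl( \sum_i D^{(1)}_i B_i + \sum_i D^{(2)}_i \phi \bigr)$ vanishes because its integrand is zero at the interior nodes by the Poisson equation while $\phi$ is zero at the boundary nodes. What survives is $\sum_i \phi^T R_i \phi \geq 0$, giving $\norm{B}_M^2 - \norm{\tilde B}_M^2 = \sum_i \norm{D^{(1)}_i \phi}_M^2 + 2 \sum_i \phi^T R_i \phi \geq 0$, i.e. the energy does not increase. The step I expect to be the main obstacle — and the reason the compatibility assumption cannot be dropped — is exactly the vanishing of that volume term: the Poisson equation holds only at the interior nodes while $\phi = 0$ holds only at the boundary nodes, and it is the disjointness of these two node sets, combined with the diagonality of $M$, that annihilates the mixed term, while the compatibility relation supplies the positive semidefinite $R_i$ needed to sign the remainder. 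For a generic pairing of the two operators no such signed remainder is available and the energy change cannot be controlled.

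Finally, for the failure of the discrete divergence constraint I would simply compute, using the Poisson equation at the interior nodes, $\sum_i D^{(1)}_i \tilde B_i = \sum_i D^{(1)}_i B_i + \sum_i \bigl( D^{(1)}_i \bigr)^2 \phi = \sum_i \bigl( (D^{(1)}_i)^2 - D^{(2)}_i \bigr) \phi$. Since the narrow-stencil second derivative $D^{(2)}_i$ is by construction different from the wide-stencil operator $(D^{(1)}_i)^2$ — compare the tridiagonal narrow stencil with the operator $-D^2$ in Example~\ref{ex:choose-div} — this difference is generically nonzero, so the projected field is not discretely divergence free in general. This requires no further estimate and settles the final claim.
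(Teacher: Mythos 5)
Your proposal is correct and follows essentially the same route as the paper: the energy splitting reducing everything to the sign of the cross term $-\sum_i\scp{\tilde B_i}{D^{(1)}_i\phi}_M$, the SBP property for the $B_i$-part, the Mattsson compatibility relation supplying the positive semidefinite $R_i$ for the $D^{(1)}_i\phi$-part, the cancellation of the mixed volume term via the disjointness of the interior (Poisson) and boundary ($\phi=0$) node sets with diagonal $M$, and the observation that $\sum_i\bigl((D^{(1)}_i)^2 - D^{(2)}_i\bigr)\phi$ is generically nonzero. The only difference is that you spell out the mass-conservation computation that the paper merely asserts by reference to the earlier cases, which is a welcome elaboration rather than a new argument.
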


\begin{remark}
  One might think that the new magnetic energy is smaller than in the case of
  section~\ref{sec:choose-div}, since the term with the scalar product in
  \eqref{eq:change-total-energy} is non-positive instead of zero. However, the
  numerical solution $\phi$ will also be different, since the Laplace
  operator is different. Thus, the new energies cannot be compared a priori
  in general.
\end{remark}

\begin{example}
\label{ex:choose-div-and-Delta}
  Using the SBP derivative operators of Example~\ref{ex:SBP-2}, a compatible
  SBP operator for the second derivative given in \cite{mattsson2008stable} is
  \begin{equation}
    D^{(2)}
    =
    \frac{1}{\Delta x^2}
    \begin{pmatrix}
      1 & -2 & 1 \\
      1 & -2 & 1 \\
      & \ddots & \ddots & \ddots \\
      && 1 & -2 & 1 \\
      && 1 & -2 & 1
    \end{pmatrix}.
  \end{equation}
  Enforcing homogeneous Dirichlet boundary conditions, the inner part of this
  operator becomes
  \begin{equation}
    \left[- D^{(2)}_0 \right]_{2:N-1,2:N-1}
    =
    \frac{1}{\Delta x^2}
    \begin{pmatrix}
      2 & -1 \\
      -1 & 2 & -1 \\
      & \ddots & \ddots & \ddots \\
      && -1 & 2 & -1 \\
      &&& -1 & 2
    \end{pmatrix},
  \end{equation}
  where Matlab like notation has been used again for the indices. This is the
  classical form of the discrete Laplace operator for homogeneous Dirichlet boundary
  conditions using second order central finite differences. Again, this operator
  is symmetric and positive definite.
\end{example}

\subsubsection{Possibility 4: Choose \texorpdfstring{$\div$}{div} and Compute
               the Least Norm Solution}
\label{sec:least-norm-solution}

Here, the concept of the least norm solution mentioned already in
section~\ref{sec:div-constraint-continuous} will be used at the discrete level.
Using a discrete divergence $\div$, the linear equation $\div \beta = d$ with
$d = - \div B$ should be solved for $\beta$. Since $d = - \div B$ is in the
range of $\div$, there is at least one solution, namely $\beta = - B$. Since the
kernel (nullspace) of $\div$ is not trivial, there are in general several
solutions. Among these, the least norm solution is given as
\begin{equation}
\label{eq:least-norm-solution}
  \beta = \div^* (\div \div^*)^{-1} d,
\end{equation}
where $(\div \div^*)^{-1} d$ is a solution $\phi$ of $(\div \div^*) \phi = d$. Indeed,
$\div \beta = \div \div^* (\div \div^*)^{-1} d = d$ and for every other vector
field $b$ with $\div b = d$, $\norm{b}^2 \geq \norm{\beta}^2$, since the equations
\eqref{eq:least-norm-solution-prop-1} and \eqref{eq:least-norm-solution-prop-2}
still hold.

The operator $(\div \div^*)$ is symmetric and positive semidefinite, since for
every discrete scalar field $\psi$, $\scp{\psi}{\div \div^* \psi} = \norm{\div^* \psi}^2
\geq 0$. Therefore, the kernel of $(\div \div^*)$ is the kernel of $\div^*$
and this kernel is in general not trivial. Nevertheless, the right hand side
$d = - \div B$ is orthogonal to this kernel, since $\scp{\psi}{\div B} =
\scp{\div^* \psi}{B} = 0$ for $\div^* \psi = 0$.

The least norm solution \eqref{eq:least-norm-solution} has the same nice properties
as the projection via the Poisson equation with homogeneous boundary conditions.
Indeed, the total magnetic field is unchanged, since
\begin{equation}
  \scp{1}{B + \beta}
  =
  \scp{1}{B + \div^* (\div \div^*)^{-1} d}
  =
  \scp{1}{B} + \scp{\div 1}{(\div \div^*)^{-1} d}
  =
  \scp{1}{B},
\end{equation}
where $1$ denotes the discrete vector field whose components are one. Moreover,
\begin{equation}
  \norm{B}^2
  =
  \norm{B + \beta - \beta}^2
  =
  \norm{B + \beta}^2 + \norm{\beta}^2 - 2 \scp{B + \beta}{\beta}
  =
  \norm{B + \beta}^2 + \norm{\beta}^2,
\end{equation}
since
\begin{equation}
\begin{aligned}
  \scp{B + \beta}{\beta}
  &=
  \scp{B - \div^* (\div \div^*)^{-1} \div B}{- \div^* (\div \div^*)^{-1} \div B}
  \\
  &=
  - \scp{\div B - \div B}{(\div \div^*)^{-1} \div B}
  =
  0.
\end{aligned}
\end{equation}
The calculations above are valid for SBP operators if the $L^2$ scalar products are
discretised via the corresponding mass matrix.
\begin{lemma}
  If the least norm solution \eqref{eq:least-norm-solution} is computed and the
  divergence is discretised via the first derivative SBP operator, the total
  magnetic field remains constant and the magnetic energy can only decrease due
  to the projection.
\end{lemma}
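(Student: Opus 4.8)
The plan is to verify the two asserted properties — conservation of the total magnetic field and non-increase of the magnetic energy — directly from the explicit least norm formula, using only adjointness with respect to the $M$-weighted inner product and the consistency of the SBP derivative operator. Throughout, the projection is the update $B \mapsto B + \beta$ with $\beta = \div^* (\div \div^*)^{-1} d$ and $d = -\div B$, and all scalar products and norms are the discrete ones induced by the mass matrix $M$.

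For the conservation of the total magnetic field, I would compute $\scp{1}{B + \beta}$, where $1$ is the discrete vector field with all components equal to one. Splitting off $\scp{1}{\beta}$ and using adjointness gives $\scp{1}{\beta} = \scp{\div 1}{(\div \div^*)^{-1} d}$. The key observation is that $\div 1 = 0$: each SBP derivative operator annihilates constants because its rows sum to zero (the consistency property $\sum_k (D_j)_{m,k} = 0$ already used in the volume-term calculations). Hence $\scp{1}{\beta} = 0$ and $\scp{1}{B + \beta} = \scp{1}{B}$, so the total magnetic field is unchanged.

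For the energy, I would expand $\norm{B}^2 = \norm{(B + \beta) - \beta}^2 = \norm{B + \beta}^2 + \norm{\beta}^2 - 2 \scp{B + \beta}{\beta}$ and show that the cross term vanishes. Writing $\beta = -\div^* (\div \div^*)^{-1} \div B$ and moving $\div^*$ onto the other factor gives $\scp{B + \beta}{\beta} = -\scp{\div (B + \beta)}{(\div \div^*)^{-1} \div B}$. Since $\div \beta = \div \div^* (\div \div^*)^{-1} d = d = -\div B$, the projected field satisfies $\div(B + \beta) = \div B + d = 0$, so the cross term is zero. Therefore $\norm{B + \beta}^2 = \norm{B}^2 - \norm{\beta}^2 \leq \norm{B}^2$, i.e. the projection does not increase the magnetic energy.

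The main obstacle is the meaning of $(\div \div^*)^{-1}$: the operator $\div \div^*$ is symmetric and only positive semidefinite, with kernel equal to the (nontrivial) kernel of $\div^*$, so it is not genuinely invertible. I would resolve this exactly as indicated earlier in the text: interpret $(\div \div^*)^{-1} d$ as any solution $\phi$ of $(\div \div^*)\phi = d$, which exists because $d = -\div B$ is orthogonal to $\ker \div^*$ (for $\div^* \psi = 0$ one has $\scp{\psi}{\div B} = \scp{\div^* \psi}{B} = 0$). This orthogonality is precisely what guarantees both the existence used in the energy step ($\div \div^* \phi = d$) and the independence of $\beta$ from the particular choice of $\phi$; once it is in hand, the two computations above are routine and carry over verbatim from the continuous identities \eqref{eq:least-norm-solution-prop-1} and \eqref{eq:least-norm-solution-prop-2}.
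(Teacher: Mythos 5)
Your proposal is correct and follows essentially the same route as the paper: conservation of the total field via $\scp{\div 1}{(\div\div^*)^{-1}d}$ with $\div 1 = 0$ from consistency of the SBP operator, and energy decrease via the expansion $\norm{B}^2 = \norm{B+\beta}^2 + \norm{\beta}^2 - 2\scp{B+\beta}{\beta}$ with the cross term vanishing because $\div(B+\beta)=0$. Your explicit treatment of the semidefiniteness of $\div\div^*$ (existence of $\phi$ from orthogonality of $d$ to $\ker\div^*$, and well-definedness of $\beta=\div^*\phi$) matches the discussion the paper places immediately before the lemma.
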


\begin{example}
\label{ex:least-norm-solution}
  Using again the SBP derivative operators of Example~\ref{ex:SBP-2},
  the adjoint operator used to compute $\div^*$ is due to the SBP property
  \eqref{eq:SBP}
  \begin{equation}
    D^*
    =
    M^{-1} D^T M
    =
    - D + M^{-1} E
    =
    \frac{1}{2 \Delta x}
    \begin{pmatrix}
      -2 & -2 \\
      1 & 0 & -1 \\
      & \ddots & \ddots & \ddots \\
      && 1 & 0 & -1 \\
      &&& 2 & 2
    \end{pmatrix}.
  \end{equation}
  Moreover, the operator $(\div \div^*)$ is given by
  \begin{equation}
    D D^*
    =
    D M^{-1} D^T M
    =
    - D^2 + D M^{-1} E
    =
    \frac{1}{4 \Delta x^2}
    \begin{pmatrix}
      6 & 4 & -2 & 0 \\
      2 & 3 & 0 & -1 \\
      -1 & 0 & 2 & 0 & -1 \\
      & \ddots & \ddots & \ddots & \ddots & \ddots \\
      && -1 & 0 & 2 & 0 & -1 \\
      &&& -1 & 0 & 3 & 2 \\
      &&& 0 & -2 & 4 & 6
    \end{pmatrix}.
  \end{equation}
  This operator is symmetric and positive semidefinite.
\end{example}

\begin{remark}
  In the interior, this least norm solution still solves the Poisson equation.
  However, the near boundary terms are different from the approach described in
  section~\ref{sec:choose-div}.
\end{remark}

\begin{remark}
  The projection via solution of the Poisson equation with homogeneous Dirichlet
  boundary conditions (section~\ref{sec:choose-div}) will in general not enforce
  $\div \tilde B = 0$ at the boundary. Contrary, the least norm solution fulfils
  $\div \tilde B = 0$ everywhere. On the other hand, the linear systems that has
  to be solved using the method of section~\ref{sec:choose-div} is symmetric and
  positive definite whereas the linear system arising in the approach described
  in this section is only positive semidefinite. Thus, solving the system via
  iterative methods for a given right hand side, the convergence behaviour might
  be different. Nevertheless, the conjugate gradient methods does still converge.
  % see e.g. \url{https://math.stackexchange.com/questions/1148355}
\end{remark}

\subsection{Solution of the Discrete Linear System}

There are many iterative methods that can be used to solve discrete linear
systems of the form $A x = y$, where $A$ is a discretisation of $-\Delta$ and
$y$ is the discrete version of $\div B$. The \emph{conjugate gradient} (CG) method
can be motivated by minimising $(x_* - x)^T A (x_* - x)$, where $x_*$ is the solution
of the linear system. This corresponds to minimising the error of $\grad \phi$,
i.e. of the correction to the magnetic field, since $A$ is a discretisation of
the Laplace operator with homogeneous boundary conditions.

Preconditioning can in general be very useful to accelerate the convergence of
Krylov subspace methods such as the CG method. For systems of the form described
above, multigrid methods have been very successful in the last decades. However,
while these can provide huge improvements for general right-hand sides, the
divergence errors occurring during a few timesteps of a simulation are relatively small. Therefore,
multigrid methods did not yield significant improvements in our numerical experiments
due to their overhead. Thus, no preconditioning is used.

\section{Numerical Results}
\label{sec:numerical-results}

In this section, some experiments using the numerical methods described hitherto
will be conducted. The numerical solutions are advanced in time using the fourth
order, five stage, low-storage Runge-Kutta scheme of \cite{carpenter1994fourth}
with time step $\Delta t = \cfl \frac{\min_i \Delta x_i}{\max \abs{u}}$, where
the CFL number is chosen as $\cfl = 0.95$ if not mentioned otherwise. Errors and
energies of numerical solutions are computed using the SBP mass matrices.
Derivative operators with interior order of accuracy $2$, $4$, and $6$ are used.
The coefficients for the second order scheme are given in Example~\ref{ex:SBP-2}
and corresponding operators used for divergence cleaning are described in
section~\ref{sec:div-constraint}. The corresponding operators for the fourth order
scheme are given in \appendixref{sec:SBP-HO} and the ones for the other scheme can be
obtained similarly using the coefficients of the first and second derivative
operators of \cite{mattsson2004summation}.

Having investigated all combinations of parameters given in
Tables~\ref{tab:uiBj-forms}--\ref{tab:ujBi-forms}, the parameter combinations
given in Table~\ref{tab:parameter-choices} have been chosen for detailed convergence
experiments. These combinations are representative and have been made based on
results presented in sections~\ref{sec:rotation_3D} and \ref{sec:confined_domain}.

\begin{table}[!ht]
\centering
  \caption{Parameter choices of the different forms used in the numerical experiments.}
  \label{tab:parameter-choices}
  \begin{tabular*}{\linewidth}{@{\extracolsep{\fill}}*7c@{}}
    \toprule
    & \mynewtag{1}{itm:central-zero-central} & \mynewtag{2}{itm:central-central-central}
    & \mynewtag{3}{itm:split-central-split} & \mynewtag{4}{itm:product-central-product}
    & \mynewtag{5}{itm:product-central-split} & \mynewtag{6}{itm:product-central-central}
    \\
    \midrule
    $\partial_j(u_i B_j)$
    & central & central
    & split & product
    & product & product
    \\
    source term
    & zero & central
    & central & central
    & central & central
    \\
    $-\partial_j (u_j B_i)$
    & central & central
    & split & product
    & split & central
    \\
    \bottomrule
  \end{tabular*}
\end{table}
% \begin{table}[!ht]
% \centering
%   \caption{Parameter choices of the different forms used in the numerical experiments.}
%   \label{tab:parameter-choices}
%   \begin{tabular}{rccc}
%     \toprule
%     & $\partial_j(u_i B_j)$ & source term & $-\partial_j (u_j B_i)$
%     \\
%     \midrule
%     \mynewtag{1}{itm:central-zero-central} & central & zero & central
%     \\
%     \mynewtag{2}{itm:central-central-central} & central & central & central
%     \\
%     \mynewtag{3}{itm:split-central-split} & split & central & split
%     \\
%     \mynewtag{4}{itm:product-central-product} & product & central & product
%     \\
%     \mynewtag{5}{itm:product-central-split} & product & central & split
%     \\
%     \mynewtag{6}{itm:product-central-central} & product & central & central
%     \\
%     \bottomrule
%   \end{tabular}
% \end{table}

Combination~\ref{itm:central-zero-central} might be the most obvious choice if
no energy investigation of the induction equations has been performed. However,
no energy estimate can be obtained for this scheme.
The parameters~\ref{itm:central-central-central}--\ref{itm:product-central-product}
use a source term but maintain the anti-symmetry of $\nabla \times (u \times B)$
otherwise. In \cite{koley2009higher}, the scheme~\ref{itm:product-central-product}
has been used. The choice~\ref{itm:product-central-split} corresponds to the form
for which an energy estimate can be obtained at the continuous level without further
application of the product rule. Finally, the method~\ref{itm:product-central-central}
has been used in \cite{mishra2010stability}.

The numerical schemes have been implemented in OpenCL using 64 bit floating point
numbers (\texttt{double}). It can be expected that the implementation can be
improved, in particular the one for higher order schemes. If runtimes are given,
they are given in seconds and have been obtained on an
Intel Xeon CPU E5-2620 v3 @ 2.40GHz
unless mentioned otherwise. These runtimes are single experiment measurements
and should only be considered as a rough guideline. Performance of optimised
implementations on different hardware will be considered in future work.

\subsection{Linear Induction Equation: Order of Convergence}
\label{sec:rotation_3D}

In this section, a convergence study using an exact solution of the linear magnetic
induction equation \eqref{eq:induction-transport-conservative} is performed.
The analytical solution
\begin{equation}
\label{eq:rotation_3D}
\begin{gathered}
  B(t,x,y,z) = R(t) \cdot B^0\bigl( R(-t) \cdot (x,y,z)^T \bigr),
  \quad
  u(x,y,z)
  =
  \frac{1}{\sqrt{3}}
  \begin{pmatrix}
    z-y \\
    x-z \\
    y-x
  \end{pmatrix},
  \\
  R(t)
  =
  \frac{1}{3}
  \begin{pmatrix}
    1 + 2 \cos(t) & 1 - \cos(t) - \sqrt{3} \sin(t) & 1 - \cos(t) + \sqrt{3} \sin(t) \\
    1 - \cos(t) + \sqrt{3} \sin(t) & 1 + 2 \cos(t) & 1 - \cos(t) - \sqrt{3} \sin(t) \\
    1 - \cos(t) - \sqrt{3} \sin(t) & 1 - \cos(t) + \sqrt{3} \sin(t) & 1 + 2 \cos(t)
  \end{pmatrix},
  \\
  B^0(x,y,z) = \alpha(x,y,z)
  \begin{pmatrix}
    \frac{1}{48} \left( 3 - \sqrt{3} - 4 \sqrt{3} y + 4 \sqrt{3} z \right) \\
    \frac{1}{48} \left( -3 - \sqrt{3} + 4 \sqrt{3} x - 4 \sqrt{3} z \right) \\
    \frac{1}{8 \sqrt{3}} \left( 1 - 2x + 2y \right)
  \end{pmatrix},
  \\
  \alpha(x,y,z) = \exp\Bigl( -\frac{5}{3} \left(
    3 - 2 (3 + \sqrt{3}) x + 12 x^2 - 2 (-3 +  \sqrt{3}) y + 12 y^2 + 4 \sqrt{3} z + 12 z^2
  \right)\Bigr),
\end{gathered}
\end{equation}
is inspired by the ones in two space dimensions used in \cite{torrilhon2004constraint,
fuchs2009stable, koley2009higher}. However, this solution is not aligned with the
Cartesian grid in three space dimensions.

The domain is chosen as $\Omega = [-1,1]^3$ and both initial and boundary conditions
are given by the analytical solution at $t = 0$ and $\partial\Omega$, respectively.
Errors of the numerical solutions using $N$ points per space direction are computed
at the final time $T = 2 \pi$.

Results using $N=40$ and the SBP operator of interior order of accuracy 4 are given
in Table~\ref{tab:rotation_3D_N_40_order_4} in \appendixref{sec:numerical-results-appendix}.
There, the errors
\begin{equation}
  \epsilon_{B} = \norm{B_\mathrm{num} - B_\mathrm{ana}}_M,
  \qquad
  \epsilon_{\div B} = \norm{\div B_\mathrm{num}}_M,
\end{equation}
of the magnetic field and its divergence are computed using the SBP mass matrix.

The following observations can be made.
Firstly, using no source term yields non-acceptable results (at least four orders
of magnitude larger errors) if $\partial_j(u_i B_j)$ is discretised using the
product or split form. Secondly, the error of the magnetic field is nearly the
same for all other configurations. Moreover, the divergence error is independent
of the discretisation of $-\partial_j(u_j B_i)$ in this case. The smallest divergence
error is obtained for the central forms of $\partial_j(u_i B_j)$ and the source
term.

Results of convergence experiments using the parameter choices given in
Table~\ref{tab:parameter-choices} can be found in
Tables~\ref{tab:rotation_3D-central-zero-central}--\ref{tab:rotation_3D-product-central-central} in \appendixref{sec:numerical-results-appendix}. There, the errors $\epsilon_{B}$
of the magnetic field and $\epsilon_{\div B}$ of the divergence are computed
using the SBP mass matrix. Additionally, the \emph{experimental order of convergence}
(EOC) for these quantities is given.

All schemes converge at least with the expected order of accuracy, i.e. $p+1$
for diagonal norm SBP operators with interior order $2p$. The schemes with
interior order of accuracy four show even an EOC of four which is better than
expected.

As is well-known in the literature \cite{kreiss1972comparison}, high order
schemes can be beneficial for the smooth solutions considered here. Indeed,
in order to obtain an error of the magnetic field with order of magnitude
$10^{-3}$, the second order schemes need ca. \SI{3e3}{s}, the fourth order
schemes need ca. \SI{2e1}{s}, and the sixth order ones need approximately
\SI{1e0}{s}.

\subsection{Linear Induction Equation: Energy Growth}
\label{sec:confined_domain}

In this section, the energy growth of numerical solutions using different parameters
will be compared. The stationary solution
\begin{equation}
\label{eq:confined_domain}
  u(x,y,z)
  =
  \begin{pmatrix}
    \sin(\pi x) \cos(\pi y) \cos(\pi z) \\
    \cos(\pi x) \sin(\pi y) \cos(\pi z) \\
    - 2 \cos(\pi x) \cos(\pi y) \sin(\pi z)
  \end{pmatrix},
  \quad
  B(t,x,y,z) = u(x,y,z),
\end{equation}
is considered in the domain $\Omega = [0,1]^3$. Thus, the velocity $u$ vanishes
at the boundary $\partial\Omega$ and no energy is transported out of the domain.
Using $N$ points per space direction, errors of the numerical solutions at the
final time $T = 2$ are compared.

Using again $N=40$ and the fourth order SBP operator, the results given in
Table~\ref{tab:confined_domain_N_40_order_4} in \appendixref{sec:numerical-results-appendix}
have been obtained.
For this test case, it is extremely important to preserve the anti-symmetry of
$\nabla \times (u \times B)$ discretely by choosing the same form for
$\partial_j(u_i B_j)$ and $-\partial_j(u_j B_i)$. Note that the application of
the split forms of $-\partial_j(u_j B_i)$ and the source term is equivalent to
the product form of $-\partial_j(u_j B_i)$ and the central form of the source term.
If the anti-symmetry is not preserved discretely, the errors can be several
orders of magnitude larger. This corresponds to a discrete preservation of the
steady state given by $u \parallel B$ and is linked to so-called well-balanced
schemes that are designed to preserve such steady states, e.g. for the shallow
water equations \cite{bouchut2004nonlinear}.

In particular, these observations show that having obtained an energy estimate
is not enough. Although the constants appearing in the discrete energy estimates
are larger for some forms, they can perform better on a finite grid for this test
case. Since the constants are obtained via worst case estimates, they do not
necessarily describe the behaviour of the schemes for every test case on a
realistic grid.

Results of convergence studies for this setup are given in
Tables~\ref{tab:confined_domain-central-zero-central}--\ref{tab:confined_domain-product-central-central} in \appendixref{sec:numerical-results-appendix}.
The parameter choice \ref{itm:central-zero-central} (central, zero, central)
preserves the steady state for all orders of accuracy. The other discretisations
using the same form for $\partial_j(u_i B_j)$ and $-\partial_j(u_j B_i)$ preserve
the steady state to machine accuracy if the second order SBP operator is used.
Otherwise, they perform reasonably well and converge approximately with the
expected order.

The other two parameter combinations --- \ref{itm:product-central-split} (product,
central, split) and \ref{itm:product-central-central} (product, central, central)
--- perform worse. The second and sixth order schemes do not seem to be in the
asymptotic regime, based on the low experimental orders of convergence. The choice
\ref{itm:product-central-split} (product, central, split) performs better than
the other one.

\subsection{Nonlinear Induction Equation: Order of Convergence}
\label{sec:hall_periodic}

Here, the nonlinear magnetic induction equation \eqref{eq:induction-full} with
transport and Hall term is considered. Since no energy stable inflow boundary
conditions have been derived, a periodic domain is chosen to test the order
of convergence.
The analytical solutions are given by exact solutions of the incompressible
Hall MHD equations with constant particle density $\rho \equiv 1$ that have been
computed in \cite{mahajan2005exact}. They are
\begin{equation}
\label{eq:hall_periodic}
  B(t,x,y,z) = \alpha u(t,x,y,z) + n,
  \quad
  u(t,x,y,z)
  =
  \begin{pmatrix}
    a \cos(k y + \alpha k t n_2) + b \sin(k z + \alpha k t n_3) \\
    b \cos(k z + \alpha k t n_3) + c \sin(k x + \alpha k t n_1) \\
    c \cos(k x + \alpha k t n_1) + a \sin(k y + \alpha k t n_2)
  \end{pmatrix},
\end{equation}
where $k = (1 - \alpha^2) / \alpha$, and $n = (n_1,n_2,n_3)$, $a$, $b$, $c$,
as well as $\alpha$ are constants. Choosing these as $n_1 = n_2 = n_3 = 1 / \sqrt{3}$,
$a = b = c = 1$, $\alpha = 1/2$ yields $k = 3/2$. Thus the solution is smooth in
the domain $\Omega = [0, 4\pi/3]^3$ with periodic boundary conditions.
The discretisations use $N$ points per space direction. Due to the second derivatives
appearing in the Hall term, the CFL number is chosen as $0.95 / N$ and the numerical
solutions are advanced up to the final time $T = 1$.

Results of convergence experiments can be found in
Tables~\ref{tab:hall_periodic-central-zero-central}--\ref{tab:hall_periodic-product-central-central} in \appendixref{sec:numerical-results-appendix}.
All schemes converge with the expected order $2p$. The schemes using the central
discretisation for both $\partial_j(u_i B_j)$ and $-\partial_j(u_j B_i)$ keep
the divergence norm near the initial error due to the projection onto the numerical
mesh. For the other schemes, the divergence norm converges with an order between
$2p$ and $2p+\frac{1}{2}$.

The good performance of the central discretisations can be explained as follows.
Since $D_i D_j = D_j D_i$ holds discretely, these schemes satisfy
\begin{equation}
  \partial_t D_i B_i
  =
  D_i D_j (u_i B_j - u_j B_i)
  =
  0
\end{equation}
if no source term is added and
\begin{equation}
  \partial_t D_i B_i
  =
  D_i D_j (u_i B_j - u_j B_i) - D_i (u_i D_j B_j)
  =
  - D_j (u_j D_i B_i)
\end{equation}
if a source term is added, similarly to the continuous case \eqref{eq:transport-eq-divB}.
Thus, if the initial condition is (nearly) divergence free, these schemes preserve
this property. However, this is in general not the case if (nonperiodic) boundary
conditions are added, cf. section~\ref{sec:bounds-on-divB}.

\subsection{Nonlinear Induction Equation: Outflow Boundary Conditions}
\label{sec:hall_outflow}

In this section, stability properties of the new outflow boundary condition
\eqref{eq:outflow-BCs-weak} will be studied. Therefore, the setup given in
section~\ref{sec:hall_periodic} will be used but the outflow boundary conditions
are chosen instead of periodic ones.

Since the Hall term is not negligible at the boundaries, this test case is relatively
demanding. Indeed, ignoring the Hall term in the surface terms by using the linear
boundary conditions with either homogeneous boundary data or using the analytical
solution \eqref{eq:hall_periodic} results in a blow-up of the numerical solutions
(\texttt{NaN}).

Using instead the outflow boundary condition \eqref{eq:outflow-BCs-weak}, the
numerical solutions do not blow up in most cases. The only exception is given
by the ``naive'' parameter choice \ref{itm:central-zero-central} (central, zero,
central) for which no energy estimate has been obtained. The energy and divergence
errors of the numerical solutions for the other parameter choices can be found in
Tables~\ref{tab:hall_outflow-central-central-central}%
--\ref{tab:hall_outflow-product-central-central} in
\appendixref{sec:numerical-results-appendix}.

It can be observed that the magnetic energy at the final time decreases with
increasing resolution (number of grid nodes $N$ or order of accuracy). This
could be expected since the outflow boundary condition \eqref{eq:outflow-BCs-weak}
has been designed to result in a decreasing energy.
Secondly, for fixed order of accuracy and spatial resolution, the values of
the magnetic energy and the divergence norm are of the same order of magnitude
for all five schemes. However, it is unknown whether a unique and smooth solution
with this choice of boundary conditions exists and whether such a solution has a
vanishing divergence, cf. the discussion in section~\ref{sec:bounds-on-divB}.
Nevertheless, the schemes \ref{itm:central-central-central} (central, central, central)
and \ref{itm:split-central-split} (split, central, split) yield a smaller divergence
norm than the other schemes (approximately between \SI{15}{\percent} and
\SI{20}{\percent}).

The magnetic energy and divergence norm of numerical solutions for the representative
parameter choices \ref{itm:split-central-split} (split, central, split) and
\ref{itm:product-central-split} (product, central, split) are visualised in
Figure~\ref{fig:hall_outflow} up to the final time $T = 5$.
As can be seen there, the magnetic energy decays over time for most cases and is
smaller for higher order of accuracy. The only exception is given by the choice
\ref{itm:product-central-split} (product, central, split) with order 2; in that
case, the energy decays at first but starts to increase at $t \approx 2$.
For the same parameters, the norm of the divergence of $B$ grows fastest.
Similarly, the divergence norm increases in time for all orders with the choice
\ref{itm:product-central-split} (product, central, split) but remains bounded
for the parameter set \ref{itm:split-central-split} (split, central, split).

\begin{figure}[th]
\centering
  \begin{subfigure}{0.48\textwidth}
  \captionsetup{skip=0pt}
  \centering
    \includegraphics[width=\textwidth]{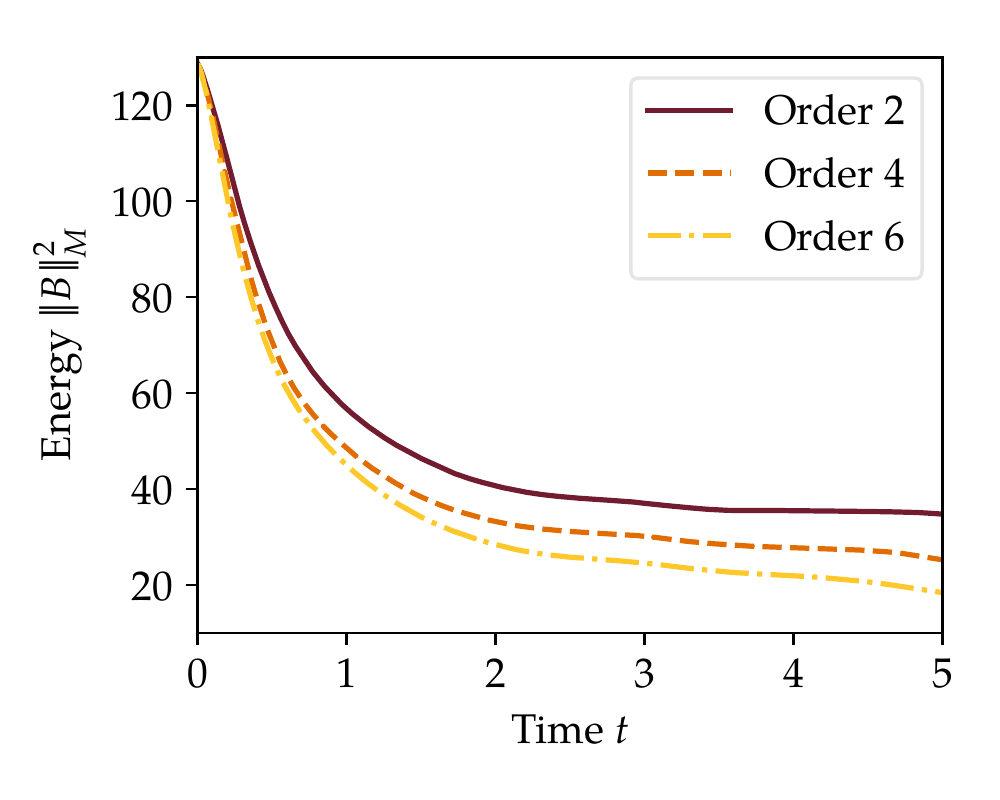}
    \caption{Form \ref{itm:split-central-split} (split, central, split), energy.}
  \end{subfigure}%
  ~
  \begin{subfigure}{0.48\textwidth}
  \captionsetup{skip=0pt}
  \centering
    \includegraphics[width=\textwidth]{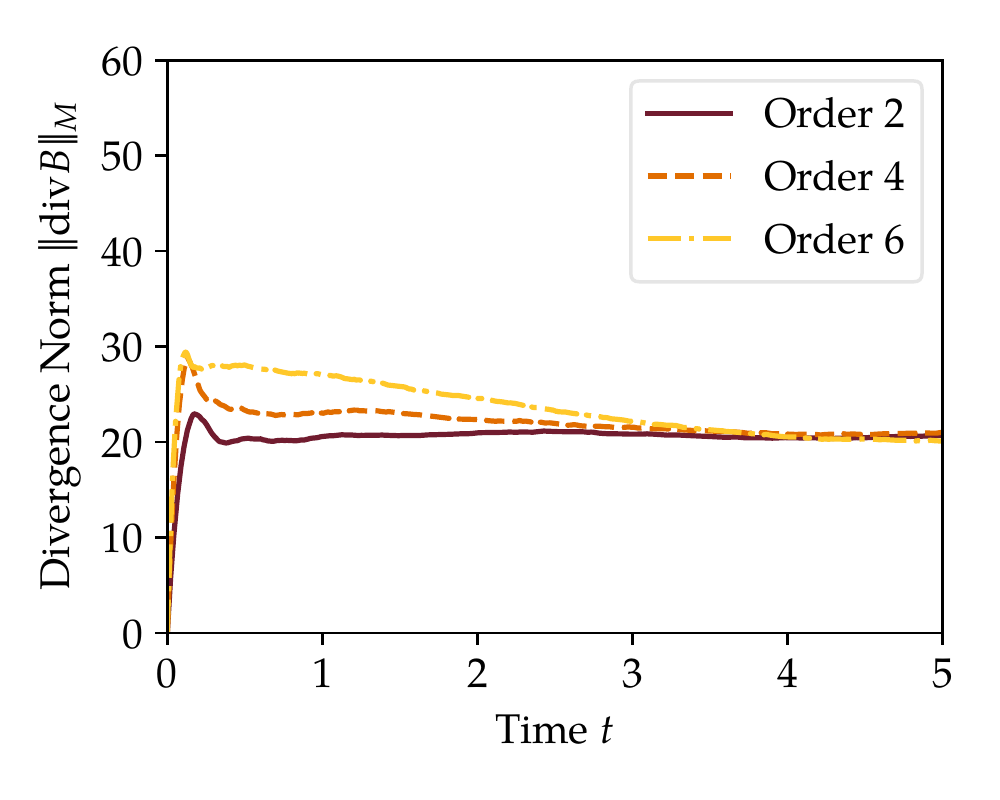}
    \caption{Form \ref{itm:split-central-split} (split, central, split), $\norm{\div B}_M$.}
  \end{subfigure}%
  \\
  \begin{subfigure}{0.49\textwidth}
  \captionsetup{skip=0pt}
  \centering
    \includegraphics[width=\textwidth]{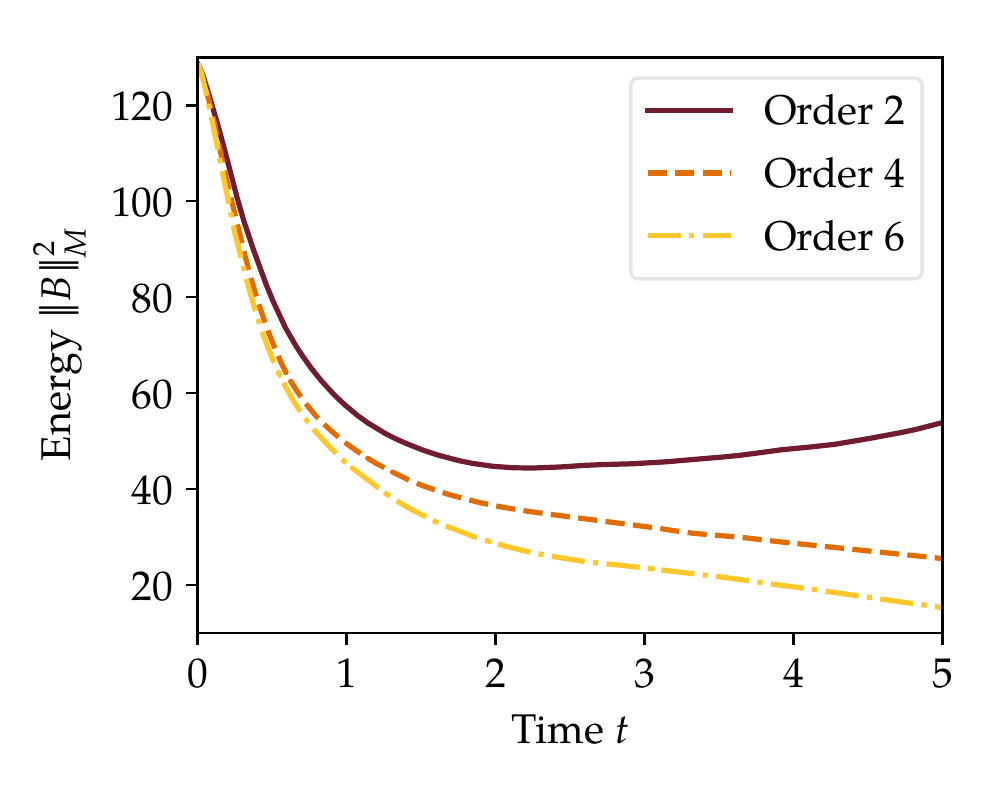}
    \caption{Form \ref{itm:product-central-split} (product, central, split), energy.}
  \end{subfigure}%
  ~
  \begin{subfigure}{0.49\textwidth}
  \captionsetup{skip=0pt}
  \centering
    \includegraphics[width=\textwidth]{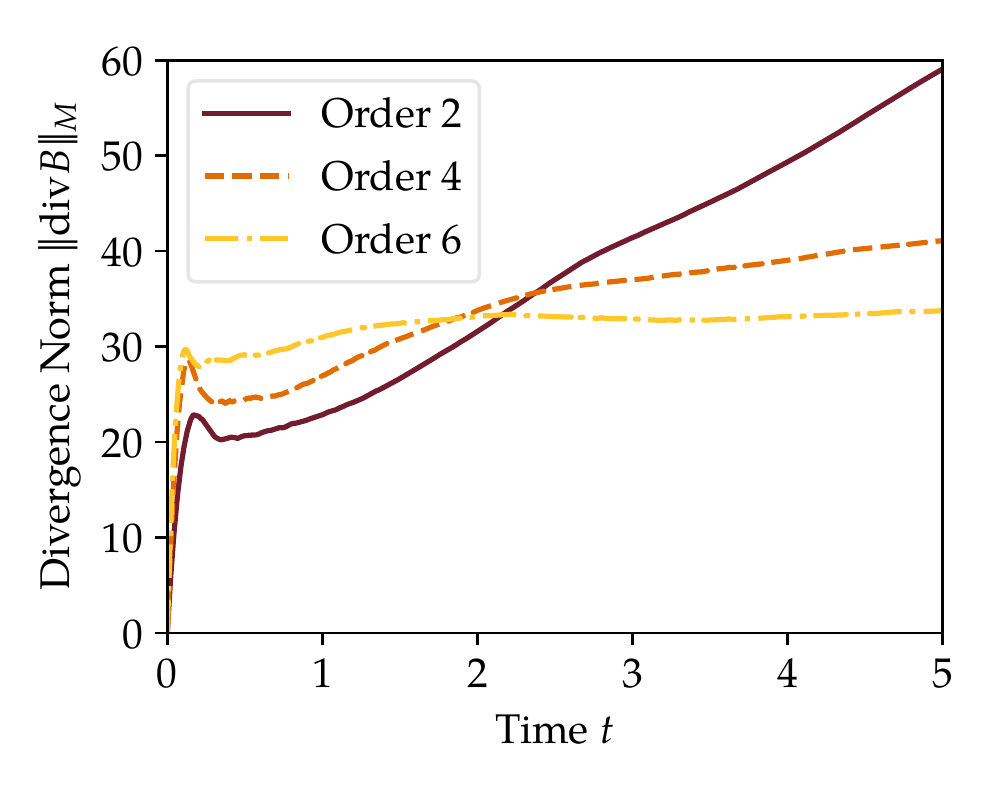}
    \caption{Form \ref{itm:product-central-split} (product, central, split), $\norm{\div B}_M$.}
  \end{subfigure}%
  \caption{Magnetic energy and divergence norms of numerical solutions of the
           nonlinear induction equation with Hall effect
           \eqref{eq:induction-transport-conservative-Hall}
           using SBP operators of different order and two choices of forms given
           in Table~\ref{tab:parameter-choices}.}
  \label{fig:hall_outflow}
\end{figure}

As mentioned in Remark~\ref{rem:hall_outflow_u_vs_u2}, the appearance of $u/2$
instead of $u$ in the proposed outflow boundary condition \eqref{eq:outflow-BCs-weak}
might be irritating. However, simply replacing $u/2$ with $u$ results in schemes
with worse performance concerning, e.g. the maximal stable time step. Indeed,
maximal CFL numbers such that the numerical solutions do not blow up till the
final time $T=1$ are given in Table~\ref{tab:hall_outflow_u_vs_u2}. There, stable
time steps are between two and three times as big for the proposed outflow boundary
condition compared to the altered one. Note that no energy estimate has been obtained
for the latter while the energy remains bounded if the proposed condition is used.

\begin{table}[!ht]
\centering
  \caption{Maximal CFL numbers such that numerical solutions of the nonlinear
           induction equation with Hall effect \eqref{eq:induction-transport-conservative-Hall}
           using SBP operators of different orders and the choice
           \ref{itm:central-central-central} (central, central, central) do not
           blow up for $N=40$.}
  \label{tab:hall_outflow_u_vs_u2}
  \begin{tabular*}{\linewidth}{@{\extracolsep{\fill}}*4c@{}}
    \toprule
    & Interior Order 2
    & Interior Order 4
    & Interior Order 6
    \\
    \midrule
    $\frac{u}{2} - \frac{\nabla \times B}{\rho} \dots$
    & $\cfl = 1.9 / N$ & $\cfl = 1.9 / N$ & $\cfl = 1.5 / N$
    \\
    $u - \frac{\nabla \times B}{\rho} \dots$
    & $\cfl = 0.7 / N$ & $\cfl = 0.6 / N$ & $\cfl = 0.6 / N$
    \\
    \bottomrule
  \end{tabular*}
\end{table}

\subsection{Divergence Cleaning}
\label{sec:div-constraint-results}

In order to test the influence of the divergence cleaning schemes via different
projection methods, setups of numerical experiments presented before will be used
to compare properties of numerical solutions obtained with or without divergence
cleaning.

The six parameter combinations of Table~\ref{tab:parameter-choices} have been
used to compute numerical solutions for the test case described in
section~\ref{sec:confined_domain}. The errors in $B$ and $\div B$ at the final
time are given in Table~\ref{tab:confined_domain_divcleaning_N_40_order_2} and
Table~\ref{tab:confined_domain_divcleaning_N_40_order_4} for the SBP operator
with order of accuracy two and four, respectively. Both results have been obtained
using $N = 40$ nodes per space direction. Either no divergence cleaning procedure
has been applied or the projection using
\begin{itemize}
  \item
  the wide stencil operator with homogeneous Dirichlet boundary conditions
  (WS, D$_0$; section~\ref{sec:choose-div}),

  \item
  the narrow stencil operator with zero Dirichlet boundary conditions
  (NS, D$_0$; section~\ref{sec:choose-div-and-Delta}),

  \item
  the wide stencil operator and the least norm solution
  (WS, LN; section~\ref{sec:least-norm-solution}).
\end{itemize}
The absolute error threshold for the divergence has been set to $10^{-3}$ and
up to $50$ iterations of the CG method have been performed after each time step.

The second order schemes using the same discretisation forms for $\partial_j(u_i B_j)$
and $-\partial_j(u_j B_i)$ perform already very well for this test case. Since the
divergence norm is already negligible, the divergence cleaning procedure does not
influence the results in Table~\ref{tab:confined_domain_divcleaning_N_40_order_2}.

For the other two parameter choices, the wide and narrow stencil discretisation
of the Laplace operator with homogeneous Dirichlet boundary conditions yield
results that are very similar to the ones without any divergence cleaning procedure
at all. The wide stencil operator reduces the error a bit more.
Contrary, the least norm solution yields a significant reduction of both the
divergence errors and the magnetic energy. Nevertheless, the energy is still ca.
3--4 times larger than for the well-performing parameter combinations and the error
in the magnetic field is of course not of the order of machine accuracy.

For the schemes with interior order of accuracy four, there is an initial divergence
error of the magnetic field due to the projection of the initial condition onto
the grid. Hence, the steady state can be left if divergence cleaning procedures
area applied, as can be seen, e.g. in the first and fourth row of
Table~\ref{tab:confined_domain_divcleaning_N_40_order_4}. As before, the least
norm solutions performs better than the other divergence projection methods for
this test case and the latter two ones perform similar.
The trend of the results of the sixth order scheme is similar to the one of the
fourth order scheme. Thus, these results are not presented here in greater detail.

Additional tests using the Hall term and outflow boundary conditions as in
section~\ref{sec:hall_outflow} up to the final time $T=5$ have been performed.
The results for the second and fourth order SBP operators are given in
Table~\ref{tab:hall_outflow_divcleaning_N_40_order_2} and
Table~\ref{tab:hall_outflow_divcleaning_N_40_order_4}, respectively.
As before, the wide stencil least norm approach is the only cleaning procedure
resulting in a significant reduction of the divergence norm. The operators with
homogeneous boundary conditions can give some stabilisation, e.g. for the (not
recommended) choice \ref{itm:central-zero-central} (central, zero, central) for
the second order operator, cf. Table~\ref{tab:hall_outflow_divcleaning_N_40_order_2}.
For the other cases, they do not yield results that are significantly better than
the ones without divergence cleaning procedure.

The results for the fourth order SBP operator are bit different. There, the
wide stencil least norm approach results in a blow-up for three parameter
combinations while the other approaches yield a blow-up of the numerical solution
only for the choice \ref{itm:central-zero-central} (central, zero, central).
However, the divergence of the numerical solutions is reduced less than an order
of magnitude by the operators using homogeneous Dirichlet boundary conditions.

Since the divergence cleaning via projection approach is relatively costly, it
might be questionable whether it is worth the effort. In numerical experiments
presented here, a desired stabilisation could also be provided by certain choices
of the discrete forms given in Table~\ref{tab:parameter-choices}.
Of course, more detailed investigations studying the influence of the chosen
error thresholds and maximum number of iterations could be carried out. However,
the general results are not very sensitive to variations of these parameters and
other means to control the divergence and stability of numerical solutions will
be studied in the future.

\section{Summary and Discussion}
\label{sec:summary}

Building on the approach to stable problems in computational physics provided
recently by \citet{nordstrom2017roadmap}, initial boundary value problems for
the magnetic induction equation have been investigated at first at the continuous
level. Using the common approach to add a non-conservative source term involving
the divergence of the magnetic field, energy estimates have been obtained at first
for the linear induction equation. By applying summation by parts operators and
simultaneous approximation terms to impose boundary conditions weakly, these results
have been transferred to the semidiscrete level. Thus, several different
semidiscretisations of the induction equation have been shown to be energy stable
(section~\ref{sec:transport-term}). Additionally, the importance of boundary
conditions for the divergence constraint has been demonstrated in
section~\ref{sec:bounds-on-divB}. Moreover, novel outflow boundary conditions for
the nonlinear induction equation with Hall effect have been proposed, resulting
in an energy estimate (section~\ref{sec:Hall-term}).
Thereafter, divergence cleaning techniques using projections of the magnetic field
have been studied. Using SBP operators and paying special attention to the boundaries,
the energy of several known approaches has been investigated and a novel scheme
with some improved properties has been proposed (section~\ref{sec:div-constraint}).

Finally, all schemes have been compared using several numerical test cases
(section~\ref{sec:numerical-results}).
In general, schemes using a nonconservative source term allow an energy estimate
and perform better than the other ones in most test cases. While there might be
some circumstances where the other schemes yield better results, it seems to be
preferable to use methods allowing an energy estimate.
However, having an energy estimate is not enough to predict the performance of
a scheme on a finite grid. Indeed, the choice of the discrete form can influence
the results considerably for certain test cases. In particular, preserving the
anti-symmetry of $\nabla \times (u \times B)$ with respect to $u$ and $B$ can
be very important, as has been demonstrated in numerical experiments.
The novel outflow boundary condition for the nonlinear magnetic induction equation
with Hall effect results in stable schemes and a decaying magnetic energy, as
expected. Together with the other ingredients discussed and developed in this
article, it will be tested in more demanding and realistic applications in the
future.

The proposed schemes have been implemented in OpenCL and have been published
as open source software \cite{ranocha2018induction} with Matlab interface via
MatCL \cite{heinisch2018MatCL}.
Further research will involve the optimisation of the implementation, other
kinds of boundary conditions for the nonlinear induction equation with Hall
effect, and other means to control divergence errors of the magnetic field.

\section*{Acknowledgements}

The first author was supported by the German Research Foundation (DFG, Deutsche
Forschungsgemeinschaft) under Grant SO~363/14-1.
The authors acknowledge the North-German Supercomputing Alliance (HLRN, Norddeutscher
Verbund für Hoch- und Höchstleistungsrechnen) for providing HPC resources that
have contributed to the research results reported in this article.

\appendix

\section{High-Order SBP Operators}
\label{sec:SBP-HO}

Similarly to Example~\ref{ex:SBP-2}, there are higher order SBP operators. The
following fourth order accurate SBP operators are given in \cite{mattsson2004summation}.
The lower right corner can be obtained from the upper right corner via
$D_{N+1-i,N+1-j} = -D_{i,j}$.
\begin{equation}
\label{eq:SBP-4}
\begin{gathered}
  D
  =
  \frac{1}{\Delta x}
  \begin{pmatrix}
    \nicefrac{-24}{17} & \nicefrac{59}{34} & \nicefrac{-4}{17} & \nicefrac{-3}{34} \\
    \nicefrac{-1}{2} & 0 & \nicefrac{1}{2} \\
    \nicefrac{4}{43} & \nicefrac{-59}{86} & 0 & \nicefrac{59}{86} & \nicefrac{-4}{43} \\
    \nicefrac{3}{98} & 0 & \nicefrac{-59}{98} & 0 & \nicefrac{32}{49} & \nicefrac{-4}{49} \\
    && \nicefrac{1}{12} & \nicefrac{-2}{3} & 0 & \nicefrac{2}{3} & \nicefrac{-1}{12} \\
    &&& \ddots & \ddots & \ddots & \ddots & \ddots \\
  \end{pmatrix},
  \\
  M
  =
  \Delta x
  \diag{\frac{17}{48}, \frac{59}{48}, \frac{43}{48}, \frac{49}{48}, 1, \dots, 1,
        \frac{49}{48}, \frac{43}{48}, \frac{59}{48}, \frac{17}{48}}.
\end{gathered}
\end{equation}

Using the approach of section~\ref{sec:least-norm-solution} for divergence cleaning,
the adjoint operator $D^*$ and $D D^*$ have to be used.
Similarly to Example~\ref{ex:least-norm-solution}, they are
\begin{equation}
  D^*
  =
  M^{-1} D^T M
  =
  - D + M^{-1} E
  =
  \frac{1}{\Delta x}
  \begin{pmatrix}
    \nicefrac{-24}{17} & \nicefrac{-59}{34} & \nicefrac{4}{17} & \nicefrac{3}{34} \\
    \nicefrac{1}{2} & 0 & \nicefrac{-1}{2} \\
    \nicefrac{-4}{43} & \nicefrac{59}{86} & 0 & \nicefrac{-59}{86} & \nicefrac{4}{43} \\
    \nicefrac{-3}{98} & 0 & \nicefrac{59}{98} & 0 & \nicefrac{-32}{49} & \nicefrac{4}{49} \\
    && \nicefrac{-1}{12} & \nicefrac{2}{3} & 0 & \nicefrac{-2}{3} & \nicefrac{1}{12} \\
    &&& \ddots & \ddots & \ddots & \ddots & \ddots \\
  \end{pmatrix}
\end{equation}
and
\begin{footnotesize}
\setcounter{MaxMatrixCols}{12}
\setlength\arraycolsep{2pt}
\begin{multline}
  \Delta x^2 \; D D^*
  =
  \\
  \begin{pmatrix}
    \nicefrac{1756935}{608923} & \nicefrac{28438}{12427} & \nicefrac{-17743}{14161}
    & \nicefrac{458}{12427} & \nicefrac{1280}{35819} & \nicefrac{-6}{833}
    \\
    \nicefrac{482}{731} & \nicefrac{885}{731} & \nicefrac{-2}{17} & \nicefrac{-283}{731}
    & \nicefrac{2}{43}
    \\
    \nicefrac{-17743}{35819} & \nicefrac{-118}{731} & \nicefrac{84428}{107457}
    & \nicefrac{-118}{2193} & \nicefrac{-944}{2107} & \nicefrac{746}{6321} & \nicefrac{-1}{129}
    \\
    \nicefrac{458}{35819} & \nicefrac{-16697}{35819} & \nicefrac{-118}{2499}
    & \nicefrac{92188}{107457} & \nicefrac{-698}{6321} & \nicefrac{-64}{147}
    & \nicefrac{16}{147} & \nicefrac{-1}{147}
    \\
    \nicefrac{80}{6321} & \nicefrac{59}{1032} & \nicefrac{-59}{147} & \nicefrac{-349}{3096}
    & \nicefrac{271403}{303408} & \nicefrac{-97}{882} & \nicefrac{-4}{9} & \nicefrac{1}{9}
    & \nicefrac{-1}{144}
    \\
    \nicefrac{-1}{392} & 0 & \nicefrac{373}{3528} & \nicefrac{-4}{9} & \nicefrac{-97}{882}
    & \nicefrac{2123}{2352} & \nicefrac{-1}{9} & \nicefrac{-4}{9} & \nicefrac{1}{9}
    & \nicefrac{-1}{144}
    \\
    0 & 0 & \nicefrac{-1}{144} & \nicefrac{1}{9} & \nicefrac{-4}{9} & \nicefrac{-1}{9}
    & \nicefrac{65}{72} & \nicefrac{-1}{9} & \nicefrac{-4}{9} & \nicefrac{1}{9}
    & \nicefrac{-1}{144}
    \\
    &&& \ddots & \ddots & \ddots & \ddots & \ddots & \ddots & \ddots & \ddots
  \end{pmatrix}.
\end{multline}
\end{footnotesize}
This operator is symmetric and positive semidefinite.
Coefficients for the derivative operators with interior order of accuracy six
can be found in \cite{mattsson2004summation}. All coefficients can also be found
in \cite{ranocha2018induction}.

\section{Results of Numerical Experiments}
\label{sec:numerical-results-appendix}

Here, additional data from numerical experiments described in
section~\ref{sec:numerical-results} are given.

%TODO: rotation_3D
\begin{table}[!ht]
\centering
  \caption{Errors and divergence norms of numerical solutions of the linear
           induction equation \eqref{eq:induction-transport-conservative} with
           analytical solution \eqref{eq:rotation_3D} using $N=40$ nodes per
           direction and the fourth order SBP operator.}
  \label{tab:rotation_3D_N_40_order_4}
  \begin{tabular*}{\linewidth}{@{\extracolsep{\fill}}*5c@{}}
    \toprule
    $\partial_j(u_i B_j)$ & source term & $-\partial_j(u_j B_i)$ & $\epsilon_{B}$ & $\epsilon_{\div B}$
    \\
    \midrule
    product &  zero    &  product & 1.10e+06 & 9.18e+06
    \\
    product &  zero    &  split   & 1.10e+06 & 9.18e+06
    \\
    product &  zero    &  central & 1.10e+06 & 9.18e+06
    \\
    product &  split   &  product & 2.04e-02 & 8.42e-02
    \\
    product &  split   &  split   & 2.04e-02 & 8.42e-02
    \\
    product &  split   &  central & 2.04e-02 & 8.42e-02
    \\
    product &  central &  product & 2.01e-02 & 5.66e-02
    \\
    product &  central &  split   & 2.01e-02 & 5.66e-02
    \\
    product &  central &  central & 2.01e-02 & 5.66e-02
    \\
    split   &  zero    &  product & 2.26e+02 & 1.43e+03
    \\
    split   &  zero    &  split   & 2.26e+02 & 1.43e+03
    \\
    split   &  zero    &  central & 2.26e+02 & 1.43e+03
    \\
    split   &  split   &  product & 2.01e-02 & 5.66e-02
    \\
    split   &  split   &  split   & 2.01e-02 & 5.66e-02
    \\
    split   &  split   &  central & 2.01e-02 & 5.66e-02
    \\
    split   &  central &  product & 1.99e-02 & 2.86e-02
    \\
    split   &  central &  split   & 1.99e-02 & 2.86e-02
    \\
    split   &  central &  central & 1.99e-02 & 2.86e-02
    \\
    central &  zero    &  product & 1.98e-02 & 4.87e-03
    \\
    central &  zero    &  split   & 1.98e-02 & 4.87e-03
    \\
    central &  zero    &  central & 1.98e-02 & 4.87e-03
    \\
    central &  split   &  product & 1.99e-02 & 2.86e-02
    \\
    central &  split   &  split   & 1.99e-02 & 2.86e-02
    \\
    central &  split   &  central & 1.99e-02 & 2.86e-02
    \\
    central &  central &  product & 1.98e-02 & 3.04e-03
    \\
    central &  central &  split   & 1.98e-02 & 3.04e-03
    \\
    central &  central &  central & 1.98e-02 & 3.04e-03
    \\
    \bottomrule
  \end{tabular*}
\end{table}

\begin{sidewaystable}
\centering
\addtolength\tabcolsep{-1.5pt}
  \caption{Results of convergence experiments for the linear induction equation
           \eqref{eq:induction-transport-conservative} with analytical solution
           \eqref{eq:rotation_3D}, SBP operators of different order, and the
           choice of discrete forms as in \ref{itm:central-zero-central}
           (central, zero, central).}
  \label{tab:rotation_3D-central-zero-central}
  \begin{tabular*}{\linewidth}{@{\extracolsep{\fill}}c|*5c|*5c|*5c@{}}
    \toprule
    & \multicolumn{5}{c}{Interior Order $2$}
    & \multicolumn{5}{c}{Interior Order $4$}
    & \multicolumn{5}{c}{Interior Order $6$}
    \\
    $N$
    & $\epsilon_{B}$ & EOC & $\epsilon_{\div B}$ & EOC & Runtime
    & $\epsilon_{B}$ & EOC & $\epsilon_{\div B}$ & EOC & Runtime
    & $\epsilon_{B}$ & EOC & $\epsilon_{\div B}$ & EOC & Runtime
    \\
    \midrule
    $ 40$
    & 1.72e-01 &       & 2.77e-02 &       & 7.18e-01
    & 1.98e-02 &       & 4.87e-03 &       & 1.02e+00
    & 3.79e-03 &       & 8.51e-03 &       & 1.34e+00
    \\
    $ 80$
    & 5.37e-02 &  1.68 & 7.14e-03 &  1.95 & 1.10e+01
    & 1.27e-03 &  3.96 & 5.82e-04 &  3.07 & 1.62e+01
    & 1.18e-04 &  5.01 & 6.87e-04 &  3.63 & 2.36e+01
    \\
    $160$
    & 1.36e-02 &  1.98 & 2.02e-03 &  1.82 & 1.69e+02
    & 7.88e-05 &  4.01 & 8.37e-05 &  2.80 & 3.03e+02
    & 4.63e-06 &  4.67 & 5.01e-05 &  3.78 & 4.20e+02
    \\
    $320$
    & 3.39e-03 &  2.01 & 6.23e-04 &  1.70 & 2.92e+03
    & 4.90e-06 &  4.01 & 1.35e-05 &  2.63 & 5.69e+03
    & 2.39e-07 &  4.27 & 3.88e-06 &  3.69 & 9.47e+03
    \\
    \bottomrule
  \end{tabular*}

  \bigskip\medskip

  \caption{Results of convergence experiments for the linear induction equation
           \eqref{eq:induction-transport-conservative} with analytical solution
           \eqref{eq:rotation_3D}, SBP operators of different order, and the
           choice of discrete forms as in \ref{itm:central-central-central}
           (central, central, central).}
  \label{tab:rotation_3D-central-central-central}
  \begin{tabular*}{\linewidth}{@{\extracolsep{\fill}}c|*5c|*5c|*5c@{}}
    \toprule
    & \multicolumn{5}{c}{Interior Order $2$}
    & \multicolumn{5}{c}{Interior Order $4$}
    & \multicolumn{5}{c}{Interior Order $6$}
    \\
    $N$
    & $\epsilon_{B}$ & EOC & $\epsilon_{\div B}$ & EOC & Runtime
    & $\epsilon_{B}$ & EOC & $\epsilon_{\div B}$ & EOC & Runtime
    & $\epsilon_{B}$ & EOC & $\epsilon_{\div B}$ & EOC & Runtime
    \\
    \midrule
    $ 40$
    & 1.72e-01 &       & 3.74e-02 &       & 7.31e-01
    & 1.98e-02 &       & 3.04e-03 &       & 1.06e+00
    & 3.70e-03 &       & 9.83e-03 &       & 1.45e+00
    \\
    $ 80$
    & 5.37e-02 &  1.68 & 7.13e-03 &  2.39 & 1.13e+01
    & 1.27e-03 &  3.96 & 2.80e-04 &  3.44 & 1.67e+01
    & 1.13e-04 &  5.04 & 6.56e-04 &  3.90 & 2.48e+01
    \\
    $160$
    & 1.36e-02 &  1.98 & 1.99e-03 &  1.84 & 1.93e+02
    & 7.89e-05 &  4.01 & 3.81e-05 &  2.88 & 3.21e+02
    & 4.91e-06 &  4.52 & 4.40e-05 &  3.90 & 4.40e+02
    \\
    $320$
    & 3.39e-03 &  2.01 & 5.97e-04 &  1.73 & 2.95e+03
    & 4.93e-06 &  4.00 & 5.42e-06 &  2.81 & 5.75e+03
    & 2.68e-07 &  4.19 & 3.33e-06 &  3.73 & 8.74e+03
    \\
    \bottomrule
  \end{tabular*}

  \bigskip\medskip

  \caption{Results of convergence experiments for the linear induction equation
           \eqref{eq:induction-transport-conservative} with analytical solution
           \eqref{eq:rotation_3D}, SBP operators of different order, and the
           choice of discrete forms as in \ref{itm:split-central-split}
           (split, central, split).}
  \label{tab:rotation_3D-split-central-split}
  \begin{tabular*}{\linewidth}{@{\extracolsep{\fill}}c|*5c|*5c|*5c@{}}
    \toprule
    & \multicolumn{5}{c}{Interior Order $2$}
    & \multicolumn{5}{c}{Interior Order $4$}
    & \multicolumn{5}{c}{Interior Order $6$}
    \\
    $N$
    & $\epsilon_{B}$ & EOC & $\epsilon_{\div B}$ & EOC & Runtime
    & $\epsilon_{B}$ & EOC & $\epsilon_{\div B}$ & EOC & Runtime
    & $\epsilon_{B}$ & EOC & $\epsilon_{\div B}$ & EOC & Runtime
    \\
    \midrule
    $ 40$
    & 1.75e-01 &       & 2.99e-01 &       & 7.42e-01
    & 1.99e-02 &       & 2.86e-02 &       & 1.12e+00
    & 3.30e-03 &       & 2.03e-02 &       & 1.55e+00
    \\
    $ 80$
    & 5.41e-02 &  1.69 & 7.72e-02 &  1.95 & 1.14e+01
    & 1.28e-03 &  3.96 & 1.82e-03 &  3.97 & 1.74e+01
    & 8.86e-05 &  5.22 & 1.20e-03 &  4.08 & 2.66e+01
    \\
    $160$
    & 1.37e-02 &  1.98 & 1.94e-02 &  2.00 & 1.79e+02
    & 7.91e-05 &  4.01 & 1.17e-04 &  3.96 & 3.15e+02
    & 3.73e-06 &  4.57 & 6.73e-05 &  4.15 & 4.69e+02
    \\
    $320$
    & 3.41e-03 &  2.01 & 4.85e-03 &  2.00 & 3.12e+03
    & 4.93e-06 &  4.00 & 8.82e-06 &  3.73 & 5.83e+03
    & 2.04e-07 &  4.20 & 5.02e-06 &  3.74 & 9.42e+03
    \\
    \bottomrule
  \end{tabular*}
\end{sidewaystable}

\begin{sidewaystable}
\centering
\addtolength\tabcolsep{-1.5pt}
  \caption{Results of convergence experiments for the linear induction equation
           \eqref{eq:induction-transport-conservative} with analytical solution
           \eqref{eq:rotation_3D}, SBP operators of different order, and the
           choice of discrete forms as in \ref{itm:product-central-product}
           (product, central, product).}
  \label{tab:rotation_3D-product-central-product}
  \begin{tabular*}{\linewidth}{@{\extracolsep{\fill}}c|*5c|*5c|*5c@{}}
    \toprule
    & \multicolumn{5}{c}{Interior Order $2$}
    & \multicolumn{5}{c}{Interior Order $4$}
    & \multicolumn{5}{c}{Interior Order $6$}
    \\
    $N$
    & $\epsilon_{B}$ & EOC & $\epsilon_{\div B}$ & EOC & Runtime
    & $\epsilon_{B}$ & EOC & $\epsilon_{\div B}$ & EOC & Runtime
    & $\epsilon_{B}$ & EOC & $\epsilon_{\div B}$ & EOC & Runtime
    \\
    \midrule
    $ 40$
    & 1.74e-01 &       & 5.69e-01 &       & 7.38e-01
    & 2.01e-02 &       & 5.66e-02 &       & 1.07e+00
    & 3.08e-03 &       & 2.09e-02 &       & 1.50e+00
    \\
    $ 80$
    & 5.49e-02 &  1.66 & 1.53e-01 &  1.89 & 1.14e+01
    & 1.29e-03 &  3.96 & 3.61e-03 &  3.97 & 1.72e+01
    & 6.86e-05 &  5.49 & 1.01e-03 &  4.38 & 2.56e+01
    \\
    $160$
    & 1.39e-02 &  1.98 & 3.85e-02 &  1.99 & 1.80e+02
    & 8.01e-05 &  4.01 & 2.25e-04 &  4.00 & 3.02e+02
    & 2.84e-06 &  4.59 & 4.70e-05 &  4.42 & 4.53e+02
    \\
    $320$
    & 3.47e-03 &  2.01 & 9.63e-03 &  2.00 & 3.09e+03
    & 4.98e-06 &  4.01 & 1.46e-05 &  3.94 & 5.73e+03
    & 1.62e-07 &  4.13 & 2.86e-06 &  4.04 & 9.31e+03
    \\
    \bottomrule
  \end{tabular*}

  \bigskip\medskip

  \caption{Results of convergence experiments for the linear induction equation
           \eqref{eq:induction-transport-conservative} with analytical solution
           \eqref{eq:rotation_3D}, SBP operators of different order, and the
           choice of discrete forms as in \ref{itm:product-central-split}
           (product, central, split).}
  \label{tab:rotation_3D-product-central-split}
  \begin{tabular*}{\linewidth}{@{\extracolsep{\fill}}c|*5c|*5c|*5c@{}}
    \toprule
    & \multicolumn{5}{c}{Interior Order $2$}
    & \multicolumn{5}{c}{Interior Order $4$}
    & \multicolumn{5}{c}{Interior Order $6$}
    \\
    $N$
    & $\epsilon_{B}$ & EOC & $\epsilon_{\div B}$ & EOC & Runtime
    & $\epsilon_{B}$ & EOC & $\epsilon_{\div B}$ & EOC & Runtime
    & $\epsilon_{B}$ & EOC & $\epsilon_{\div B}$ & EOC & Runtime
    \\
    \midrule
    $ 40$
    & 1.74e-01 &       & 5.69e-01 &       & 7.59e-01
    & 2.01e-02 &       & 5.66e-02 &       & 1.10e+00
    & 3.08e-03 &       & 2.09e-02 &       & 1.52e+00
    \\
    $ 80$
    & 5.49e-02 &  1.66 & 1.53e-01 &  1.89 & 1.16e+01
    & 1.29e-03 &  3.96 & 3.61e-03 &  3.97 & 1.73e+01
    & 6.86e-05 &  5.49 & 1.01e-03 &  4.38 & 2.59e+01
    \\
    $160$
    & 1.39e-02 &  1.98 & 3.85e-02 &  1.99 & 1.87e+02
    & 8.01e-05 &  4.01 & 2.25e-04 &  4.00 & 3.16e+02
    & 2.84e-06 &  4.59 & 4.70e-05 &  4.42 & 4.39e+02
    \\
    $320$
    & 3.47e-03 &  2.01 & 9.63e-03 &  2.00 & 3.03e+03
    & 4.98e-06 &  4.01 & 1.46e-05 &  3.94 & 5.93e+03
    & 1.62e-07 &  4.13 & 2.86e-06 &  4.04 & 7.55e+03
    \\
    \bottomrule
  \end{tabular*}

  \bigskip\medskip

  \caption{Results of convergence experiments for the linear induction equation
           \eqref{eq:induction-transport-conservative} with analytical solution
           \eqref{eq:rotation_3D}, SBP operators of different order, and the
           choice of discrete forms as in \ref{itm:product-central-central}
           (product, central, central).}
  \label{tab:rotation_3D-product-central-central}
  \begin{tabular*}{\linewidth}{@{\extracolsep{\fill}}c|*5c|*5c|*5c@{}}
    \toprule
    & \multicolumn{5}{c}{Interior Order $2$}
    & \multicolumn{5}{c}{Interior Order $4$}
    & \multicolumn{5}{c}{Interior Order $6$}
    \\
    $N$
    & $\epsilon_{B}$ & EOC & $\epsilon_{\div B}$ & EOC & Runtime
    & $\epsilon_{B}$ & EOC & $\epsilon_{\div B}$ & EOC & Runtime
    & $\epsilon_{B}$ & EOC & $\epsilon_{\div B}$ & EOC & Runtime
    \\
    \midrule
    $ 40$
    & 1.74e-01 &       & 5.69e-01 &       & 7.39e-01
    & 2.01e-02 &       & 5.66e-02 &       & 1.08e+00
    & 3.08e-03 &       & 2.09e-02 &       & 1.47e+00
    \\
    $ 80$
    & 5.49e-02 &  1.66 & 1.53e-01 &  1.89 & 1.14e+01
    & 1.29e-03 &  3.96 & 3.61e-03 &  3.97 & 1.72e+01
    & 6.86e-05 &  5.49 & 1.01e-03 &  4.38 & 2.54e+01
    \\
    $160$
    & 1.39e-02 &  1.98 & 3.85e-02 &  1.99 & 1.78e+02
    & 8.01e-05 &  4.01 & 2.25e-04 &  4.00 & 3.20e+02
    & 2.84e-06 &  4.59 & 4.70e-05 &  4.42 & 4.41e+02
    \\
    $320$
    & 3.47e-03 &  2.01 & 9.63e-03 &  2.00 & 2.81e+03
    & 4.98e-06 &  4.01 & 1.46e-05 &  3.94 & 5.82e+03
    & 1.62e-07 &  4.13 & 2.86e-06 &  4.04 & 8.02e+03
    \\
    \bottomrule
  \end{tabular*}
\end{sidewaystable}

%TODO: confined_domain
\begin{table}[!ht]
\centering
  \caption{Errors and divergence norms of numerical solutions of the linear
           induction equation \eqref{eq:induction-transport-conservative} with
           analytical solution \eqref{eq:confined_domain} using $N=40$ nodes per
           direction and the fourth order SBP operator.}
  \label{tab:confined_domain_N_40_order_4}
  \begin{tabular*}{\linewidth}{@{\extracolsep{\fill}}*5c@{}}
    \toprule
    $\partial_j(u_i B_j)$ & source term & $-\partial_j(u_j B_i)$ & $\epsilon_{B}$ & $\epsilon_{\div B}$
    \\
    \midrule
    product & zero    & product & 1.55e-12 & 1.77e-03
    \\
    product & zero    & split   & 9.32e+03 & 4.88e+05
    \\
    product & zero    & central & 4.63e+09 & 1.98e+11
    \\
    product & split   & product & 2.74e+02 & 8.81e+02
    \\
    product & split   & split   & 1.39e+04 & 9.19e+05
    \\
    product & split   & central & 3.07e+09 & 2.30e+11
    \\
    product & central & product & 6.65e-02 & 5.66e-01
    \\
    product & central & split   & 1.33e+00 & 4.85e+00
    \\
    product & central & central & 1.64e+03 & 1.09e+05
    \\
    split   & zero    & product & 2.88e+05 & 4.90e+06
    \\
    split   & zero    & split   & 2.98e-17 & 1.77e-03
    \\
    split   & zero    & central & 1.44e+03 & 7.53e+04
    \\
    split   & split   & product & 6.65e-02 & 5.66e-01
    \\
    split   & split   & split   & 1.33e+00 & 4.85e+00
    \\
    split   & split   & central & 1.64e+03 & 1.09e+05
    \\
    split   & central & product & 3.80e+01 & 1.17e+02
    \\
    split   & central & split   & 3.50e-03 & 2.93e-02
    \\
    split   & central & central & 3.75e-01 & 1.37e+00
    \\
    central & zero    & product & 4.03e+07 & 1.11e+09
    \\
    central & zero    & split   & 3.20e+01 & 7.43e+02
    \\
    central & zero    & central & 0.00e+00 & 1.77e-03
    \\
    central & split   & product & 3.80e+01 & 1.17e+02
    \\
    central & split   & split   & 3.50e-03 & 2.93e-02
    \\
    central & split   & central & 3.75e-01 & 1.37e+00
    \\
    central & central & product & 2.77e+04 & 5.72e+05
    \\
    central & central & split   & 2.54e-01 & 1.21e+00
    \\
    central & central & central & 4.09e-03 & 3.68e-02
    \\
    \bottomrule
  \end{tabular*}
\end{table}

\begin{sidewaystable}
\centering
\addtolength\tabcolsep{-1.5pt}
  \caption{Results of convergence experiments for the linear induction equation
           \eqref{eq:induction-transport-conservative} with analytical solution
           \eqref{eq:confined_domain}, SBP operators of different order, and the
           choice of discrete forms as in \ref{itm:central-zero-central}
           (central, zero, central).}
  \label{tab:confined_domain-central-zero-central}
  \begin{tabular*}{\linewidth}{@{\extracolsep{\fill}}c|*5c|*5c|*5c@{}}
    \toprule
    & \multicolumn{5}{c}{Interior Order $2$}
    & \multicolumn{5}{c}{Interior Order $4$}
    & \multicolumn{5}{c}{Interior Order $6$}
    \\
    $N$
    & $\epsilon_{B}$ & EOC & $\epsilon_{\div B}$ & EOC & Runtime
    & $\epsilon_{B}$ & EOC & $\epsilon_{\div B}$ & EOC & Runtime
    & $\epsilon_{B}$ & EOC & $\epsilon_{\div B}$ & EOC & Runtime
    \\
    \midrule
    $ 40$
    & 1.87e-30 &       & 4.10e-15 &       & 6.17e-01
    & 2.73e-30 &       & 1.77e-03 &       & 8.02e-01
    & 4.33e-30 &       & 7.77e-05 &       & 1.11e+00
    \\
    $ 80$
    & 8.13e-31 &  1.20 & 7.28e-15 & -0.83 & 9.33e+00
    & 9.12e-31 &  1.58 & 3.04e-04 &  2.54 & 1.33e+01
    & 1.21e-30 &  1.84 & 3.26e-06 &  4.57 & 2.02e+01
    \\
    $160$
    & 1.18e-30 & -0.53 & 9.24e-15 & -0.34 & 1.53e+02
    & 1.35e-30 & -0.56 & 5.30e-05 &  2.52 & 2.70e+02
    & 1.56e-30 & -0.37 & 1.40e-07 &  4.54 & 3.66e+02
    \\
    $320$
    & 2.11e-29 & -4.17 & 2.24e-14 & -1.28 & 2.49e+03
    & 2.58e-29 & -4.26 & 9.29e-06 &  2.51 & 4.55e+03
    & 2.76e-29 & -4.14 & 6.12e-09 &  4.52 & 8.47e+03
    \\
    \bottomrule
  \end{tabular*}

  \bigskip\medskip

  \caption{Results of convergence experiments for the linear induction equation
           \eqref{eq:induction-transport-conservative} with analytical solution
           \eqref{eq:confined_domain}, SBP operators of different order, and the
           choice of discrete forms as in \ref{itm:central-central-central}
           (central, central, central).}
  \label{tab:confined_domain-central-central-central}
  \begin{tabular*}{\linewidth}{@{\extracolsep{\fill}}c|*5c|*5c|*5c@{}}
    \toprule
    & \multicolumn{5}{c}{Interior Order $2$}
    & \multicolumn{5}{c}{Interior Order $4$}
    & \multicolumn{5}{c}{Interior Order $6$}
    \\
    $N$
    & $\epsilon_{B}$ & EOC & $\epsilon_{\div B}$ & EOC & Runtime
    & $\epsilon_{B}$ & EOC & $\epsilon_{\div B}$ & EOC & Runtime
    & $\epsilon_{B}$ & EOC & $\epsilon_{\div B}$ & EOC & Runtime
    \\
    \midrule
    $ 40$
    & 9.74e-16 &       & 1.36e-14 &       & 6.08e-01
    & 4.09e-03 &       & 3.68e-02 &       & 8.50e-01
    & 3.55e-04 &       & 4.86e-03 &       & 1.20e+00
    \\
    $ 80$
    & 1.49e-15 & -0.61 & 5.34e-14 & -1.98 & 9.41e+00
    & 6.02e-04 &  2.76 & 1.41e-02 &  1.39 & 1.39e+01
    & 1.69e-05 &  4.39 & 3.42e-04 &  3.83 & 2.12e+01
    \\
    $160$
    & 1.57e-15 & -0.08 & 1.55e-13 & -1.54 & 1.43e+02
    & 6.55e-05 &  3.20 & 5.46e-03 &  1.37 & 2.74e+02
    & 5.40e-07 &  4.97 & 2.85e-05 &  3.59 & 3.81e+02
    \\
    $320$
    & 5.13e-15 & -1.70 & 9.31e-13 & -2.59 & 2.60e+03
    & 7.71e-06 &  3.09 & 1.88e-03 &  1.54 & 4.87e+03
    & 1.30e-08 &  5.38 & 2.45e-06 &  3.54 & 8.58e+03
    \\
    \bottomrule
  \end{tabular*}

  \bigskip\medskip

  \caption{Results of convergence experiments for the linear induction equation
           \eqref{eq:induction-transport-conservative} with analytical solution
           \eqref{eq:confined_domain}, SBP operators of different order, and the
           choice of discrete forms as in \ref{itm:split-central-split}
           (split, central, split).}
  \label{tab:confined_domain-split-central-split}
  \begin{tabular*}{\linewidth}{@{\extracolsep{\fill}}c|*5c|*5c|*5c@{}}
    \toprule
    & \multicolumn{5}{c}{Interior Order $2$}
    & \multicolumn{5}{c}{Interior Order $4$}
    & \multicolumn{5}{c}{Interior Order $6$}
    \\
    $N$
    & $\epsilon_{B}$ & EOC & $\epsilon_{\div B}$ & EOC & Runtime
    & $\epsilon_{B}$ & EOC & $\epsilon_{\div B}$ & EOC & Runtime
    & $\epsilon_{B}$ & EOC & $\epsilon_{\div B}$ & EOC & Runtime
    \\
    \midrule
    $ 40$
    & 1.48e-15 &       & 2.28e-14 &       & 5.86e-01
    & 3.50e-03 &       & 2.93e-02 &       & 9.10e-01
    & 1.17e-04 &       & 1.80e-03 &       & 1.27e+00
    \\
    $ 80$
    & 2.26e-15 & -0.61 & 7.59e-14 & -1.74 & 9.90e+00
    & 5.27e-04 &  2.73 & 6.76e-03 &  2.12 & 1.47e+01
    & 6.10e-06 &  4.27 & 1.14e-04 &  3.98 & 2.26e+01
    \\
    $160$
    & 2.07e-15 &  0.12 & 1.91e-13 & -1.33 & 1.51e+02
    & 6.59e-05 &  3.00 & 1.45e-03 &  2.22 & 2.84e+02
    & 3.02e-07 &  4.33 & 7.23e-06 &  3.98 & 4.08e+02
    \\
    $320$
    & 6.84e-15 & -1.72 & 1.10e-12 & -2.52 & 2.57e+03
    & 8.96e-06 &  2.88 & 3.08e-04 &  2.23 & 5.38e+03
    & 1.19e-08 &  4.66 & 4.54e-07 &  3.99 & 8.90e+03
    \\
    \bottomrule
  \end{tabular*}
\end{sidewaystable}

\begin{sidewaystable}
\centering
\addtolength\tabcolsep{-2.5pt}
  \caption{Results of convergence experiments for the linear induction equation
           \eqref{eq:induction-transport-conservative} with analytical solution
           \eqref{eq:confined_domain}, SBP operators of different order, and the
           choice of discrete forms as in \ref{itm:product-central-product}
           (product, central, product).}
  \label{tab:confined_domain-product-central-product}
  \begin{tabular*}{\linewidth}{@{\extracolsep{\fill}}c|*5c|*5c|*5c@{}}
    \toprule
    & \multicolumn{5}{c}{Interior Order $2$}
    & \multicolumn{5}{c}{Interior Order $4$}
    & \multicolumn{5}{c}{Interior Order $6$}
    \\
    $N$
    & $\epsilon_{B}$ & EOC & $\epsilon_{\div B}$ & EOC & Runtime
    & $\epsilon_{B}$ & EOC & $\epsilon_{\div B}$ & EOC & Runtime
    & $\epsilon_{B}$ & EOC & $\epsilon_{\div B}$ & EOC & Runtime
    \\
    \midrule
    $ 40$
    & 7.35e-14 &       & 8.21e-13 &       & 6.21e-01
    & 6.65e-02 &       & 5.66e-01 &       & 8.75e-01
    & 1.59e-03 &       & 1.41e-02 &       & 1.25e+00
    \\
    $ 80$
    & 1.86e-13 & -1.34 & 1.28e-12 & -0.65 & 9.76e+00
    & 1.01e-02 &  2.72 & 1.68e-01 &  1.75 & 1.45e+01
    & 5.99e-05 &  4.73 & 8.55e-04 &  4.04 & 2.17e+01
    \\
    $160$
    & 3.89e-13 & -1.07 & 3.09e-12 & -1.27 & 1.59e+02
    & 1.58e-03 &  2.67 & 5.48e-02 &  1.62 & 2.78e+02
    & 2.76e-06 &  4.44 & 5.92e-05 &  3.85 & 3.91e+02
    \\
    $320$
    & 6.36e-13 & -0.71 & 1.34e-11 & -2.11 & 2.62e+03
    & 2.83e-04 &  2.48 & 1.75e-02 &  1.65 & 5.08e+03
    & 1.37e-07 &  4.33 & 4.67e-06 &  3.66 & 8.55e+03
    \\
    \bottomrule
  \end{tabular*}

  \bigskip\medskip

  \caption{Results of convergence experiments for the linear induction equation
           \eqref{eq:induction-transport-conservative} with analytical solution
           \eqref{eq:confined_domain}, SBP operators of different order, and the
           choice of discrete forms as in \ref{itm:product-central-split}
           (product, central, split).}
  \label{tab:confined_domain-product-central-split}
  \begin{tabular*}{\linewidth}{@{\extracolsep{\fill}}c|*5c|*5c|*5c@{}}
    \toprule
    & \multicolumn{5}{c}{Interior Order $2$}
    & \multicolumn{5}{c}{Interior Order $4$}
    & \multicolumn{5}{c}{Interior Order $6$}
    \\
    $N$
    & $\epsilon_{B}$ & EOC & $\epsilon_{\div B}$ & EOC & Runtime
    & $\epsilon_{B}$ & EOC & $\epsilon_{\div B}$ & EOC & Runtime
    & $\epsilon_{B}$ & EOC & $\epsilon_{\div B}$ & EOC & Runtime
    \\
    \midrule
    $ 40$
    & 8.49e+01 &       & 2.86e+02 &       & 6.41e-01
    & 1.33e+00 &       & 4.85e+00 &       & 8.98e-01
    & 1.73e+00 &       & 1.23e+01 &       & 1.30e+00
    \\
    $ 80$
    & 4.05e+01 &  1.07 & 1.68e+02 &  0.77 & 9.45e+00
    & 1.61e-01 &  3.04 & 6.97e-01 &  2.80 & 1.44e+01
    & 2.44e-01 &  2.82 & 1.22e+00 &  3.33 & 2.17e+01
    \\
    $160$
    & 1.85e+01 &  1.13 & 1.08e+02 &  0.63 & 1.49e+02
    & 1.95e-02 &  3.04 & 1.83e-01 &  1.93 & 2.80e+02
    & 3.00e-02 &  3.02 & 1.02e-01 &  3.58 & 4.02e+02
    \\
    $320$
    & 8.31e+00 &  1.16 & 7.43e+01 &  0.54 & 2.62e+03
    & 2.30e-03 &  3.09 & 5.61e-02 &  1.71 & 5.22e+03
    & 3.61e-03 &  3.06 & 7.30e-03 &  3.80 & 8.99e+03
    \\
    \bottomrule
  \end{tabular*}

  \bigskip\medskip

  \caption{Results of convergence experiments for the linear induction equation
           \eqref{eq:induction-transport-conservative} with analytical solution
           \eqref{eq:confined_domain}, SBP operators of different order, and the
           choice of discrete forms as in \ref{itm:product-central-central}
           (product, central, central).}
  \label{tab:confined_domain-product-central-central}
  \begin{tabular*}{\linewidth}{@{\extracolsep{\fill}}c|*5c|*5c|*5c@{}}
    \toprule
    & \multicolumn{5}{c}{Interior Order $2$}
    & \multicolumn{5}{c}{Interior Order $4$}
    & \multicolumn{5}{c}{Interior Order $6$}
    \\
    $N$
    & $\epsilon_{B}$ & EOC & $\epsilon_{\div B}$ & EOC & Runtime
    & $\epsilon_{B}$ & EOC & $\epsilon_{\div B}$ & EOC & Runtime
    & $\epsilon_{B}$ & EOC & $\epsilon_{\div B}$ & EOC & Runtime
    \\
    \midrule
    $ 40$
    & 3.52e+04 &       & 1.68e+06 &       & 6.19e-01
    & 1.64e+03 &       & 1.09e+05 &       & 8.57e-01
    & 8.33e+03 &       & 6.43e+05 &       & 1.22e+00
    \\
    $ 80$
    & 2.36e+04 &  0.58 & 2.29e+06 & -0.44 & 9.60e+00
    & 2.61e+02 &  2.65 & 3.50e+04 &  1.63 & 1.40e+01
    & 1.44e+03 &  2.53 & 2.25e+05 &  1.52 & 2.10e+01
    \\
    $160$
    & 1.51e+04 &  0.65 & 2.93e+06 & -0.36 & 1.53e+02
    & 4.06e+01 &  2.68 & 1.10e+04 &  1.68 & 2.75e+02
    & 2.30e+02 &  2.64 & 7.24e+04 &  1.64 & 4.09e+02
    \\
    $320$
    & 8.93e+03 &  0.75 & 3.49e+06 & -0.25 & 2.63e+03
    & 5.85e+00 &  2.79 & 3.17e+03 &  1.79 & 5.16e+03
    & 3.38e+01 &  2.77 & 2.13e+04 &  1.76 & 8.61e+03
    \\
    \bottomrule
  \end{tabular*}
\end{sidewaystable}

%TODO: hall_periodic
\begin{sidewaystable}
\centering
\addtolength\tabcolsep{-1.5pt}
  \caption{Results of convergence experiments for the nonlinear induction equation
           \eqref{eq:induction-transport-conservative-Hall} with analytical solution
           \eqref{eq:hall_periodic}, SBP operators of different order, and the
           choice of discrete forms as in \ref{itm:central-zero-central}
           (central, zero, central).}
  \label{tab:hall_periodic-central-zero-central}
  \begin{tabular*}{\linewidth}{@{\extracolsep{\fill}}c|*5c|*5c|*5c@{}}
    \toprule
    & \multicolumn{5}{c}{Interior Order $2$}
    & \multicolumn{5}{c}{Interior Order $4$}
    & \multicolumn{5}{c}{Interior Order $6$}
    \\
    $N$
    & $\epsilon_{B}$ & EOC & $\epsilon_{\div B}$ & EOC & Runtime
    & $\epsilon_{B}$ & EOC & $\epsilon_{\div B}$ & EOC & Runtime
    & $\epsilon_{B}$ & EOC & $\epsilon_{\div B}$ & EOC & Runtime
    \\
    \midrule
    $ 40$
    & 2.02e-02 &       & 2.55e-13 &       & 5.59e+00
    & 9.98e-05 &       & 3.44e-13 &       & 6.85e+00
    & 5.27e-07 &       & 3.94e-13 &       & 8.26e+00
    \\
    $ 60$
    & 9.00e-03 &  1.99 & 5.76e-13 & -2.01 & 4.18e+01
    & 1.97e-05 &  4.00 & 7.78e-13 & -2.01 & 5.34e+01
    & 4.64e-08 &  5.99 & 8.83e-13 & -1.99 & 6.65e+01
    \\
    $ 80$
    & 5.07e-03 &  2.00 & 1.02e-12 & -2.00 & 1.97e+02
    & 6.25e-06 &  4.00 & 1.38e-12 & -1.99 & 2.26e+02
    & 8.26e-09 &  6.00 & 1.57e-12 & -2.00 & 3.16e+02
    \\
    $100$
    & 3.24e-03 &  2.00 & 1.61e-12 & -2.02 & 5.70e+02
    & 2.56e-06 &  4.00 & 2.15e-12 & -2.00 & 7.69e+02
    & 2.17e-09 &  6.00 & 2.46e-12 & -2.01 & 9.57e+02
    \\
    \bottomrule
  \end{tabular*}

  \bigskip\medskip

  \caption{Results of convergence experiments for the nonlinear induction equation
           \eqref{eq:induction-transport-conservative-Hall} with analytical solution
           \eqref{eq:hall_periodic}, SBP operators of different order, and the
           choice of discrete forms as in \ref{itm:central-central-central}
           (central, central, central).}
  \label{tab:hall_periodic-central-central-central}
  \begin{tabular*}{\linewidth}{@{\extracolsep{\fill}}c|*5c|*5c|*5c@{}}
    \toprule
    & \multicolumn{5}{c}{Interior Order $2$}
    & \multicolumn{5}{c}{Interior Order $4$}
    & \multicolumn{5}{c}{Interior Order $6$}
    \\
    $N$
    & $\epsilon_{B}$ & EOC & $\epsilon_{\div B}$ & EOC & Runtime
    & $\epsilon_{B}$ & EOC & $\epsilon_{\div B}$ & EOC & Runtime
    & $\epsilon_{B}$ & EOC & $\epsilon_{\div B}$ & EOC & Runtime
    \\
    \midrule
    $ 40$
    & 2.02e-02 &       & 2.56e-13 &       & 5.80e+00
    & 9.98e-05 &       & 3.47e-13 &       & 6.83e+00
    & 5.27e-07 &       & 3.93e-13 &       & 8.28e+00
    \\
    $ 60$
    & 9.00e-03 &  1.99 & 5.76e-13 & -2.00 & 4.39e+01
    & 1.97e-05 &  4.00 & 7.76e-13 & -1.99 & 5.19e+01
    & 4.64e-08 &  5.99 & 8.82e-13 & -1.99 & 6.61e+01
    \\
    $ 80$
    & 5.07e-03 &  2.00 & 1.03e-12 & -2.02 & 1.94e+02
    & 6.25e-06 &  4.00 & 1.38e-12 & -2.00 & 2.28e+02
    & 8.26e-09 &  6.00 & 1.58e-12 & -2.02 & 3.14e+02
    \\
    $100$
    & 3.24e-03 &  2.00 & 1.61e-12 & -2.00 & 5.74e+02
    & 2.56e-06 &  4.00 & 2.16e-12 & -2.00 & 7.48e+02
    & 2.17e-09 &  6.00 & 2.46e-12 & -1.99 & 9.48e+02
    \\
    \bottomrule
  \end{tabular*}

  \bigskip\medskip

  \caption{Results of convergence experiments for the nonlinear induction equation
           \eqref{eq:induction-transport-conservative-Hall} with analytical solution
           \eqref{eq:hall_periodic}, SBP operators of different order, and the
           choice of discrete forms as in \ref{itm:split-central-split}
           (split, central, split).}
  \label{tab:hall_periodic-split-central-split}
  \begin{tabular*}{\linewidth}{@{\extracolsep{\fill}}c|*5c|*5c|*5c@{}}
    \toprule
    & \multicolumn{5}{c}{Interior Order $2$}
    & \multicolumn{5}{c}{Interior Order $4$}
    & \multicolumn{5}{c}{Interior Order $6$}
    \\
    $N$
    & $\epsilon_{B}$ & EOC & $\epsilon_{\div B}$ & EOC & Runtime
    & $\epsilon_{B}$ & EOC & $\epsilon_{\div B}$ & EOC & Runtime
    & $\epsilon_{B}$ & EOC & $\epsilon_{\div B}$ & EOC & Runtime
    \\
    \midrule
    $ 40$
    & 2.02e-02 &       & 1.48e-03 &       & 5.82e+00
    & 9.98e-05 &       & 1.02e-05 &       & 7.19e+00
    & 5.27e-07 &       & 6.34e-08 &       & 9.03e+00
    \\
    $ 60$
    & 9.00e-03 &  2.00 & 5.30e-04 &  2.53 & 4.41e+01
    & 1.98e-05 &  4.00 & 1.71e-06 &  4.40 & 5.75e+01
    & 4.64e-08 &  5.99 & 4.78e-09 &  6.38 & 7.50e+01
    \\
    $ 80$
    & 5.07e-03 &  2.00 & 2.66e-04 &  2.40 & 1.87e+02
    & 6.25e-06 &  4.00 & 4.76e-07 &  4.44 & 2.44e+02
    & 8.26e-09 &  6.00 & 7.55e-10 &  6.41 & 3.40e+02
    \\
    $100$
    & 3.24e-03 &  2.00 & 1.63e-04 &  2.21 & 5.65e+02
    & 2.56e-06 &  4.00 & 1.82e-07 &  4.32 & 8.63e+02
    & 2.17e-09 &  6.00 & 1.88e-10 &  6.24 & 1.04e+03
    \\
    \bottomrule
  \end{tabular*}
\end{sidewaystable}

\begin{sidewaystable}
\centering
\addtolength\tabcolsep{-1.5pt}
  \caption{Results of convergence experiments for the nonlinear induction equation
           \eqref{eq:induction-transport-conservative-Hall} with analytical solution
           \eqref{eq:hall_periodic}, SBP operators of different order, and the
           choice of discrete forms as in \ref{itm:product-central-product}
           (product, central, product).}
  \label{tab:hall_periodic-product-central-product}
  \begin{tabular*}{\linewidth}{@{\extracolsep{\fill}}c|*5c|*5c|*5c@{}}
    \toprule
    & \multicolumn{5}{c}{Interior Order $2$}
    & \multicolumn{5}{c}{Interior Order $4$}
    & \multicolumn{5}{c}{Interior Order $6$}
    \\
    $N$
    & $\epsilon_{B}$ & EOC & $\epsilon_{\div B}$ & EOC & Runtime
    & $\epsilon_{B}$ & EOC & $\epsilon_{\div B}$ & EOC & Runtime
    & $\epsilon_{B}$ & EOC & $\epsilon_{\div B}$ & EOC & Runtime
    \\
    \midrule
    $ 40$
    & 2.03e-02 &       & 3.03e-03 &       & 5.81e+00
    & 9.99e-05 &       & 2.05e-05 &       & 6.94e+00
    & 5.27e-07 &       & 1.30e-07 &       & 8.66e+00
    \\
    $ 60$
    & 9.01e-03 &  2.00 & 1.07e-03 &  2.56 & 4.37e+01
    & 1.98e-05 &  4.00 & 3.42e-06 &  4.42 & 5.24e+01
    & 4.64e-08 &  5.99 & 9.72e-09 &  6.39 & 6.90e+01
    \\
    $ 80$
    & 5.07e-03 &  2.00 & 5.39e-04 &  2.40 & 1.93e+02
    & 6.25e-06 &  4.00 & 9.58e-07 &  4.43 & 2.45e+02
    & 8.26e-09 &  6.00 & 1.55e-09 &  6.38 & 3.03e+02
    \\
    $100$
    & 3.25e-03 &  2.00 & 3.29e-04 &  2.21 & 5.63e+02
    & 2.56e-06 &  4.00 & 3.68e-07 &  4.29 & 7.64e+02
    & 2.17e-09 &  6.00 & 3.87e-10 &  6.21 & 9.93e+02
    \\
    \bottomrule
  \end{tabular*}

  \bigskip\medskip

  \caption{Results of convergence experiments for the nonlinear induction equation
           \eqref{eq:induction-transport-conservative-Hall} with analytical solution
           \eqref{eq:hall_periodic}, SBP operators of different order, and the
           choice of discrete forms as in \ref{itm:product-central-split}
           (product, central, split).}
  \label{tab:hall_periodic-product-central-split}
  \begin{tabular*}{\linewidth}{@{\extracolsep{\fill}}c|*5c|*5c|*5c@{}}
    \toprule
    & \multicolumn{5}{c}{Interior Order $2$}
    & \multicolumn{5}{c}{Interior Order $4$}
    & \multicolumn{5}{c}{Interior Order $6$}
    \\
    $N$
    & $\epsilon_{B}$ & EOC & $\epsilon_{\div B}$ & EOC & Runtime
    & $\epsilon_{B}$ & EOC & $\epsilon_{\div B}$ & EOC & Runtime
    & $\epsilon_{B}$ & EOC & $\epsilon_{\div B}$ & EOC & Runtime
    \\
    \midrule
    $ 40$
    & 2.03e-02 &       & 3.03e-03 &       & 5.87e+00
    & 9.99e-05 &       & 2.05e-05 &       & 7.30e+00
    & 5.27e-07 &       & 1.30e-07 &       & 9.35e+00
    \\
    $ 60$
    & 9.01e-03 &  2.00 & 1.07e-03 &  2.56 & 4.28e+01
    & 1.98e-05 &  4.00 & 3.42e-06 &  4.42 & 5.53e+01
    & 4.64e-08 &  5.99 & 9.72e-09 &  6.39 & 7.23e+01
    \\
    $ 80$
    & 5.07e-03 &  2.00 & 5.39e-04 &  2.40 & 1.91e+02
    & 6.25e-06 &  4.00 & 9.58e-07 &  4.43 & 2.59e+02
    & 8.26e-09 &  6.00 & 1.55e-09 &  6.38 & 3.22e+02
    \\
    $100$
    & 3.25e-03 &  2.00 & 3.29e-04 &  2.21 & 5.83e+02
    & 2.56e-06 &  4.00 & 3.68e-07 &  4.29 & 8.29e+02
    & 2.17e-09 &  6.00 & 3.87e-10 &  6.21 & 1.06e+03
    \\
    \bottomrule
  \end{tabular*}

  \bigskip\medskip

  \caption{Results of convergence experiments for the nonlinear induction equation
           \eqref{eq:induction-transport-conservative-Hall} with analytical solution
           \eqref{eq:hall_periodic}, SBP operators of different order, and the
           choice of discrete forms as in \ref{itm:product-central-central}
           (product, central, central).}
  \label{tab:hall_periodic-product-central-central}
  \begin{tabular*}{\linewidth}{@{\extracolsep{\fill}}c|*5c|*5c|*5c@{}}
    \toprule
    & \multicolumn{5}{c}{Interior Order $2$}
    & \multicolumn{5}{c}{Interior Order $4$}
    & \multicolumn{5}{c}{Interior Order $6$}
    \\
    $N$
    & $\epsilon_{B}$ & EOC & $\epsilon_{\div B}$ & EOC & Runtime
    & $\epsilon_{B}$ & EOC & $\epsilon_{\div B}$ & EOC & Runtime
    & $\epsilon_{B}$ & EOC & $\epsilon_{\div B}$ & EOC & Runtime
    \\
    \midrule
    $ 40$
    & 2.03e-02 &       & 3.03e-03 &       & 5.73e+00
    & 9.99e-05 &       & 2.05e-05 &       & 6.99e+00
    & 5.27e-07 &       & 1.30e-07 &       & 8.36e+00
    \\
    $ 60$
    & 9.01e-03 &  2.00 & 1.07e-03 &  2.56 & 4.28e+01
    & 1.98e-05 &  4.00 & 3.42e-06 &  4.42 & 5.25e+01
    & 4.64e-08 &  5.99 & 9.72e-09 &  6.39 & 6.49e+01
    \\
    $ 80$
    & 5.07e-03 &  2.00 & 5.39e-04 &  2.40 & 1.90e+02
    & 6.25e-06 &  4.00 & 9.58e-07 &  4.43 & 2.50e+02
    & 8.26e-09 &  6.00 & 1.55e-09 &  6.38 & 3.08e+02
    \\
    $100$
    & 3.25e-03 &  2.00 & 3.29e-04 &  2.21 & 5.85e+02
    & 2.56e-06 &  4.00 & 3.68e-07 &  4.29 & 7.81e+02
    & 2.17e-09 &  6.00 & 3.87e-10 &  6.21 & 9.32e+02
    \\
    \bottomrule
  \end{tabular*}
\end{sidewaystable}

%TODO: hall_outflow
\begin{table}[!ht]
\centering
\begin{small}
\addtolength\tabcolsep{-1.5pt}
  \caption{Results of numerical experiments for the nonlinear induction equation
           \eqref{eq:induction-transport-conservative-Hall} using SBP operators of
           different order and the choice of discrete forms as in
           \ref{itm:central-central-central} (central, central, central).}
  \label{tab:hall_outflow-central-central-central}
  \begin{tabular*}{\linewidth}{@{\extracolsep{\fill}}c|*3c|*3c|*3c@{}}
    \toprule
    & \multicolumn{3}{c}{Interior Order $2$}
    & \multicolumn{3}{c}{Interior Order $4$}
    & \multicolumn{3}{c}{Interior Order $6$}
    \\
    $N$
    & $\norm{B}_M^2$ & $\norm{D_i B_i}_M$ & Runtime
    & $\norm{B}_M^2$ & $\norm{D_i B_i}_M$ & Runtime
    & $\norm{B}_M^2$ & $\norm{D_i B_i}_M$ & Runtime
    \\
    \midrule
    $ 40$
    & 5.68e+01 & 2.01e+01 & 3.71e+00
    & 4.88e+01 & 2.23e+01 & 5.36e+00
    & 4.51e+01 & 2.64e+01 & 7.36e+00
    \\
    $ 80$
    & 4.61e+01 & 2.79e+01 & 6.88e+01
    & 4.23e+01 & 3.13e+01 & 1.16e+02
    & 4.03e+01 & 3.67e+01 & 1.70e+02
    \\
    \bottomrule
  \end{tabular*}

  \bigskip

  \caption{Results of numerical experiments for the nonlinear induction equation
           \eqref{eq:induction-transport-conservative-Hall} using SBP operators of
           different order and the choice of discrete forms as in
           \ref{itm:split-central-split} (split, central, split).}
  \label{tab:hall_outflow-split-central-split}
  \begin{tabular*}{\linewidth}{@{\extracolsep{\fill}}c|*3c|*3c|*3c@{}}
    \toprule
    & \multicolumn{3}{c}{Interior Order $2$}
    & \multicolumn{3}{c}{Interior Order $4$}
    & \multicolumn{3}{c}{Interior Order $6$}
    \\
    $N$
    & $\norm{B}_M^2$ & $\norm{D_i B_i}_M$ & Runtime
    & $\norm{B}_M^2$ & $\norm{D_i B_i}_M$ & Runtime
    & $\norm{B}_M^2$ & $\norm{D_i B_i}_M$ & Runtime
    \\
    \midrule
    $ 40$
    & 5.66e+01 & 2.05e+01 & 3.48e+00
    & 4.86e+01 & 2.31e+01 & 5.00e+00
    & 4.48e+01 & 2.69e+01 & 7.15e+00
    \\
    $ 80$
    & 4.60e+01 & 2.88e+01 & 6.55e+01
    & 4.21e+01 & 3.28e+01 & 1.08e+02
    & 4.04e+01 & 3.78e+01 & 1.66e+02
    \\
    \bottomrule
  \end{tabular*}

  \bigskip

  \caption{Results of numerical experiments for the nonlinear induction equation
           \eqref{eq:induction-transport-conservative-Hall} using SBP operators of
           different order and the choice of discrete forms as in
           \ref{itm:product-central-product} (product, central, product).}
  \label{tab:hall_outflow-product-central-product}
  \begin{tabular*}{\linewidth}{@{\extracolsep{\fill}}c|*3c|*3c|*3c@{}}
    \toprule
    & \multicolumn{3}{c}{Interior Order $2$}
    & \multicolumn{3}{c}{Interior Order $4$}
    & \multicolumn{3}{c}{Interior Order $6$}
    \\
    $N$
    & $\norm{B}_M^2$ & $\norm{D_i B_i}_M$ & Runtime
    & $\norm{B}_M^2$ & $\norm{D_i B_i}_M$ & Runtime
    & $\norm{B}_M^2$ & $\norm{D_i B_i}_M$ & Runtime
    \\
    \midrule
    $ 40$
    & 5.73e+01 & 2.28e+01 & 3.35e+00
    & 4.96e+01 & 2.69e+01 & 5.41e+00
    & 4.55e+01 & 3.11e+01 & 7.50e+00
    \\
    $ 80$
    & 4.64e+01 & 3.26e+01 & 6.89e+01
    & 4.27e+01 & 3.83e+01 & 1.15e+02
    & 4.08e+01 & 4.35e+01 & 1.79e+02
    \\
    \bottomrule
  \end{tabular*}

  \bigskip

  \caption{Results of numerical experiments for the nonlinear induction equation
           \eqref{eq:induction-transport-conservative-Hall} using SBP operators of
           different order and the choice of discrete forms as in
           \ref{itm:product-central-split} (product, central, split).}
  \label{tab:hall_outflow-product-central-split}
  \begin{tabular*}{\linewidth}{@{\extracolsep{\fill}}c|*3c|*3c|*3c@{}}
    \toprule
    & \multicolumn{3}{c}{Interior Order $2$}
    & \multicolumn{3}{c}{Interior Order $4$}
    & \multicolumn{3}{c}{Interior Order $6$}
    \\
    $N$
    & $\norm{B}_M^2$ & $\norm{D_i B_i}_M$ & Runtime
    & $\norm{B}_M^2$ & $\norm{D_i B_i}_M$ & Runtime
    & $\norm{B}_M^2$ & $\norm{D_i B_i}_M$ & Runtime
    \\
    \midrule
    $ 40$
    & 5.72e+01 & 2.28e+01 & 3.41e+00
    & 4.95e+01 & 2.69e+01 & 5.36e+00
    & 4.56e+01 & 3.10e+01 & 7.46e+00
    \\
    $ 80$
    & 4.65e+01 & 3.26e+01 & 6.93e+01
    & 4.27e+01 & 3.83e+01 & 1.16e+02
    & 4.08e+01 & 4.36e+01 & 1.79e+02
    \\
    \bottomrule
  \end{tabular*}

  \bigskip

  \caption{Results of numerical experiments for the nonlinear induction equation
           \eqref{eq:induction-transport-conservative-Hall} using SBP operators of
           different order and the choice of discrete forms as in
           \ref{itm:product-central-central} (product, central, central).}
  \label{tab:hall_outflow-product-central-central}
  \begin{tabular*}{\linewidth}{@{\extracolsep{\fill}}c|*3c|*3c|*3c@{}}
    \toprule
    & \multicolumn{3}{c}{Interior Order $2$}
    & \multicolumn{3}{c}{Interior Order $4$}
    & \multicolumn{3}{c}{Interior Order $6$}
    \\
    $N$
    & $\norm{B}_M^2$ & $\norm{D_i B_i}_M$ & Runtime
    & $\norm{B}_M^2$ & $\norm{D_i B_i}_M$ & Runtime
    & $\norm{B}_M^2$ & $\norm{D_i B_i}_M$ & Runtime
    \\
    \midrule
    $ 40$
    & 5.71e+01 & 2.28e+01 & 3.36e+00
    & 4.96e+01 & 2.69e+01 & 5.59e+00
    & 4.55e+01 & 3.10e+01 & 7.71e+00
    \\
    $ 80$
    & 4.65e+01 & 3.26e+01 & 6.99e+01
    & 4.27e+01 & 3.83e+01 & 1.19e+02
    & 4.08e+01 & 4.36e+01 & 1.85e+02
    \\
    \bottomrule
  \end{tabular*}
\end{small}
\end{table}

%TODO: confined_domain_divcleaning
\begin{table}[!ht]
\centering
  \caption{Errors and divergence norms of numerical solutions of the linear
           induction equation \eqref{eq:induction-transport-conservative} with
           analytical solution \eqref{eq:confined_domain} using $N=40$ nodes per
           direction, the second order SBP operator, and different divergence
           cleaning procedures.}
  \label{tab:confined_domain_divcleaning_N_40_order_2}
  \begin{tabular*}{\linewidth}{@{\extracolsep{\fill}}*7c@{}}
    \toprule
    $\partial_j(u_i B_j)$ & source term & $-\partial_j(u_j B_i)$
    & $\norm{B}_M^2$ & $\epsilon_{B}$ & $\epsilon_{\div B}$ & div. cleaning
    \\
    \midrule
    central & zero    & central & 7.49e-01 & 0.00e+00 & 2.79e-15 & WS, LN
    \\
    central & zero    & central & 7.49e-01 & 0.00e+00 & 2.79e-15 & WS, D$_0$
    \\
    central & zero    & central & 7.49e-01 & 0.00e+00 & 2.79e-15 & NS, D$_0$
    \\
    central & zero    & central & 7.49e-01 & 0.00e+00 & 2.79e-15 & none
    \\
    central & central & central & 7.49e-01 & 3.97e-16 & 5.18e-15 & WS, LN
    \\
    central & central & central & 7.49e-01 & 3.97e-16 & 5.18e-15 & WS, D$_0$
    \\
    central & central & central & 7.49e-01 & 3.97e-16 & 5.18e-15 & NS, D$_0$
    \\
    central & central & central & 7.49e-01 & 3.97e-16 & 5.18e-15 & none
    \\
    split   & central & split   & 7.49e-01 & 3.67e-16 & 7.72e-15 & WS, LN
    \\
    split   & central & split   & 7.49e-01 & 3.67e-16 & 7.72e-15 & WS, D$_0$
    \\
    split   & central & split   & 7.49e-01 & 3.67e-16 & 7.72e-15 & NS, D$_0$
    \\
    split   & central & split   & 7.49e-01 & 3.67e-16 & 7.72e-15 & none
    \\
    product & central & product & 7.49e-01 & 1.79e-15 & 3.09e-14 & WS, LN
    \\
    product & central & product & 7.49e-01 & 1.79e-15 & 3.09e-14 & WS, D$_0$
    \\
    product & central & product & 7.49e-01 & 1.79e-15 & 3.09e-14 & NS, D$_0$
    \\
    product & central & product & 7.49e-01 & 1.79e-15 & 3.09e-14 & none
    \\
    product & central & split   & 2.87e+01 & 5.30e+00 & 3.51e-03 & WS, LN
    \\
    product & central & split   & 3.39e+03 & 5.82e+01 & 2.58e+02 & WS, D$_0$
    \\
    product & central & split   & 3.79e+03 & 6.15e+01 & 2.58e+02 & NS, D$_0$
    \\
    product & central & split   & 7.20e+03 & 8.49e+01 & 2.86e+02 & none
    \\
    product & central & central & 1.92e+01 & 4.33e+00 & 1.09e-04 & WS, LN
    \\
    product & central & central & 1.02e+09 & 3.20e+04 & 1.54e+06 & WS, D$_0$
    \\
    product & central & central & 1.21e+09 & 3.47e+04 & 1.67e+06 & NS, D$_0$
    \\
    product & central & central & 1.24e+09 & 3.52e+04 & 1.68e+06 & none
    \\
    \bottomrule
  \end{tabular*}
\end{table}

\begin{table}[!ht]
\centering
  \caption{Errors and divergence norms of numerical solutions of the linear
           induction equation \eqref{eq:induction-transport-conservative} with
           analytical solution \eqref{eq:confined_domain} using $N=40$ nodes per
           direction, the fourth order SBP operator, and different divergence
           cleaning procedures.}
  \label{tab:confined_domain_divcleaning_N_40_order_4}
  \begin{tabular*}{\linewidth}{@{\extracolsep{\fill}}*7c@{}}
    \toprule
    $\partial_j(u_i B_j)$ & source term & $-\partial_j(u_j B_i)$
    & $\norm{B}_M^2$ & $\epsilon_{B}$ & $\epsilon_{\div B}$ & div. cleaning
    \\
    \midrule
    central & zero    & central & 7.50e-01 & 2.84e-03 & 3.50e-06 & WS, LN
    \\
    central & zero    & central & 7.50e-01 & 2.79e-03 & 2.39e-03 & WS, D$_0$
    \\
    central & zero    & central & 7.50e-01 & 2.75e-03 & 2.38e-03 & NS, D$_0$
    \\
    central & zero    & central & 7.50e-01 & 0.00e+00 & 1.77e-03 & none
    \\
    central & central & central & 7.50e-01 & 2.82e-03 & 2.06e-06 & WS, LN
    \\
    central & central & central & 7.50e-01 & 4.62e-03 & 3.18e-02 & WS, D$_0$
    \\
    central & central & central & 7.50e-01 & 4.73e-03 & 3.35e-02 & NS, D$_0$
    \\
    central & central & central & 7.50e-01 & 4.09e-03 & 3.68e-02 & none
    \\
    split   & central & split   & 7.50e-01 & 6.98e-03 & 3.62e-05 & WS, LN
    \\
    split   & central & split   & 7.50e-01 & 1.12e-02 & 5.37e-02 & WS, D$_0$
    \\
    split   & central & split   & 7.50-01 & 1.13e-02 & 4.64e-02 & NS, D$_0$
    \\
    split   & central & split   & 7.50e-01 & 3.50e-03 & 2.93e-02 & none
    \\
    product & central & product & 5.47e+00 & 2.17e+00 & 1.36e-02 & WS, LN
    \\
    product & central & product & 1.06e+00 & 5.56e-01 & 5.70e+00 & WS, D$_0$
    \\
    product & central & product & 1.18e+00 & 6.58e-01 & 3.16e+00 & NS, D$_0$
    \\
    product & central & product & 7.49e-01 & 6.65e-02 & 5.65e-01 & none
    \\
    product & central & split   & 7.49e-01 & 2.69e-02 & 5.62e-05 & WS, LN
    \\
    product & central & split   & 1.34e+00 & 7.69e-01 & 4.44e+00 & WS, D$_0$
    \\
    product & central & split   & 1.43e+00 & 8.25e-01 & 4.04e+00 & NS, D$_0$
    \\
    product & central & split   & 2.49e+00 & 1.33e+00 & 4.85e+00 & none
    \\
    product & central & central & 7.61e-01 & 1.01e-01 & 6.05e-04 & WS, LN
    \\
    product & central & central & 2.02e+06 & 1.42e+03 & 9.46e+04 & WS, D$_0$
    \\
    product & central & central & 2.60e+06 & 1.61e+03 & 1.07e+05 & NS, D$_0$
    \\
    product & central & central & 2.68e+06 & 1.64e+03 & 1.09e+05 & none
    \\
    \bottomrule
  \end{tabular*}
\end{table}

%TODO: hall_outflow_divcleaning
\begin{table}[!ht]
\centering
  \caption{Errors and divergence norms of numerical solutions of the nonlinear
           induction equation \eqref{eq:induction-transport-conservative-Hall}
           with outflow boundary conditions \eqref{eq:outflow-BCs-weak} using
           $N=40$ nodes per direction, the second order SBP operator, and different
           divergence cleaning procedures.}
  \label{tab:hall_outflow_divcleaning_N_40_order_2}
  \begin{tabular*}{\linewidth}{@{\extracolsep{\fill}}*6c@{}}
    \toprule
    $\partial_j(u_i B_j)$ & source term & $-\partial_j(u_j B_i)$
    & $\norm{B}_M^2$ & $\epsilon_{\div B}$ & div. cleaning
    \\
    \midrule
    central &    zero & central & 2.35e+01 & 1.27e-04 & WS, LN
    \\
    central &    zero & central & 2.90e+01 & 8.80e+00 & WS, D$_0$
    \\
    central &    zero & central & 2.94e+01 & 9.27e+00 & NS, D$_0$
    \\
    central &    zero & central &      NaN &      NaN & none
    \\
    central & central & central & 2.41e+01 & 8.79e-05 & WS, LN
    \\
    central & central & central & 2.40e+01 & 5.70e+00 & WS, D$_0$
    \\
    central & central & central & 2.42e+01 & 5.63e+00 & NS, D$_0$
    \\
    central & central & central & 3.38e+01 & 1.52e+01 & none
    \\
      split & central &   split & 2.38e+01 & 1.38e-04 & WS, LN
    \\
      split & central &   split & 2.57e+01 & 5.79e+00 & WS, D$_0$
    \\
      split & central &   split & 2.57e+01 & 5.61e+00 & NS, D$_0$
    \\
      split & central &   split & 3.48e+01 & 2.07e+01 & none
    \\
    product & central & product & 2.59e+01 & 1.16e-04 & WS, LN
    \\
    product & central & product & 3.50e+01 & 7.42e+00 & WS, D$_0$
    \\
    product & central & product & 3.42e+01 & 7.51e+00 & NS, D$_0$
    \\
    product & central & product & 5.25e+01 & 5.71e+01 & none
    \\
    product & central &   split & 2.56e+01 & 1.74e-04 & WS, LN
    \\
    product & central &   split & 3.32e+01 & 7.25e+00 & WS, D$_0$
    \\
    product & central &   split & 3.30e+01 & 7.41e+00 & NS, D$_0$
    \\
    product & central &   split & 5.39e+01 & 5.91e+01 & none
    \\
    product & central & central & 2.56e+01 & 1.46e-04 & WS, LN
    \\
    product & central & central & 3.17e+01 & 7.01e+00 & WS, D$_0$
    \\
    product & central & central & 3.30e+01 & 7.26e+00 & NS, D$_0$
    \\
    product & central & central & 5.40e+01 & 5.90e+01 & none
    \\
    \bottomrule
  \end{tabular*}
\end{table}

\begin{table}[!ht]
\centering
  \caption{Errors and divergence norms of numerical solutions of the nonlinear
           induction equation \eqref{eq:induction-transport-conservative-Hall}
           with outflow boundary conditions \eqref{eq:outflow-BCs-weak} using
           $N=40$ nodes per direction, the fourth order SBP operator, and different
           divergence cleaning procedures.}
  \label{tab:hall_outflow_divcleaning_N_40_order_4}
  \begin{tabular*}{\linewidth}{@{\extracolsep{\fill}}*6c@{}}
    \toprule
    $\partial_j(u_i B_j)$ & source term & $-\partial_j(u_j B_i)$
    & $\norm{B}_M^2$ & $\epsilon_{\div B}$ & div. cleaning
    \\
    \midrule
    central &    zero & central &      NaN &      NaN & WS, LN
    \\
    central &    zero & central &      NaN &      NaN & WS, D$_0$
    \\
    central &    zero & central &      NaN &      NaN & NS, D$_0$
    \\
    central &    zero & central &      NaN &      NaN & none
    \\
    central & central & central &      NaN &      NaN & WS, LN
    \\
    central & central & central & 1.95e+01 & 7.08e+00 & WS, D$_0$
    \\
    central & central & central & 1.98e+01 & 6.83e+00 & NS, D$_0$
    \\
    central & central & central & 2.53e+01 & 1.63e+01 & none
    \\
      split & central &   split &      NaN &      NaN & WS, LN
    \\
      split & central &   split & 1.99e+01 & 6.64e+00 & WS, D$_0$
    \\
      split & central &   split & 2.04e+01 & 6.41e+00 & NS, D$_0$
    \\
      split & central &   split & 2.52e+01 & 2.10e+01 & none
    \\
    product & central & product & 2.10e+01 & 5.03e-04 & WS, LN
    \\
    product & central & product & 2.39e+01 & 7.60e+00 & WS, D$_0$
    \\
    product & central & product & 2.30e+01 & 6.94e+00 & NS, D$_0$
    \\
    product & central & product & 2.57e+01 & 4.09e+01 & none
    \\
    product & central &   split & 2.10e+01 & 3.76e-04 & WS, LN
    \\
    product & central &   split & 2.37e+01 & 7.70e+00 & WS, D$_0$
    \\
    product & central &   split & 2.29e+01 & 7.08e+00 & NS, D$_0$
    \\
    product & central &   split & 2.55e+01 & 4.11e+01 & none
    \\
    product & central & central & 2.11e+01 & 2.20e-04 & WS, LN
    \\
    product & central & central & 2.39e+01 & 7.63e+00 & WS, D$_0$
    \\
    product & central & central & 2.30e+01 & 6.96e+00 & NS, D$_0$
    \\
    product & central & central & 2.75e+01 & 4.12e+01 & none
    \\
    \bottomrule
  \end{tabular*}
\end{table}

\printbibliography

\end{document}